\documentclass{article}
\usepackage{graphicx} 
\usepackage{geometry}
\usepackage{amsmath, amsfonts, amssymb,amsthm,nccmath,bbm,mathrsfs}
\usepackage[colorlinks,linkcolor=blue,citecolor=red]{hyperref}
\usepackage{ragged2e}
\usepackage{verbatim}
\usepackage{appendix}
\usepackage{abstract}
\numberwithin{equation}{section}

\newtheorem{theorem}{Theorem}[section]

\newtheorem{prop}{Proposition}[section]
\newtheorem{lem}{Lemma}[section]
\newtheorem{mydef}{Definition}[section]
\newtheorem{rmk}{Remark}[section]

\def\bx{\boldsymbol{x}}
\def\by{\boldsymbol{y}}

\def\bnu{\boldsymbol{\nu}}
\def\bu{\boldsymbol{u}}
\def\bv{\boldsymbol{v}}
\def\bet{\boldsymbol{\eta}}

\def\scrA{\mathscr{A}}
\def\scrJ{\mathscr{J}}

\def\lamt{\tilde{\lambda}}
\def\lamb{\bar{\lambda}}

        \def\pt{\partial_t}
        \def\ps{\partial_s}
        \def\ptau{\partial_{\tau}}
        \def\pz{\partial_z}

        \def\ptauphi{\partial_{\tau}\varphi}
        \def\psphi{\partial_{s}\varphi}
        \def\pzphi{\partial_{z}\varphi}
\def\ptauzphi{\partial_{\tau}\partial_{z}\varphi}

\def\intol{\int_0^1}

\def\intotl{\int_0^\tau\int_0^1}

\def\Flm{1+\varphi}

\def\flm{1+\psi}

\title{Nonlinear Stability of Linearly Expanding Goldreich-Weber Solutions for the Navier-Stokes-Poisson System with Degenerate Viscosity Under Radial Perturbations}
\author{Han Cao\thanks{Department of Mathematics, University of Southern California, 3620 S. Vermont Ave., Los Angeles, CA 90089-2532, USA. Email: caohan@usc.edu.}}
\date{ }
\begin{document}

\maketitle
\begin{abstract}
    In this work, we study the nonlinear stability of linearly expanding Goldreich-Weber (GW) solutions for the gravitational Navier-Stokes-Poisson system with pressure law $p(\rho)=\rho^{\frac{4}{3}}$ and degenerate viscosity. It is well known that linearly expanding GW solutions are special solutions to the Euler-Poisson system with $\gamma=\frac{4}{3}$. With bulk viscosity equal to 0, linearly expanding GW solutions are also solutions to the Navier-Stokes-Poisson equations. Choosing shear viscosity proportional to $\rho^{\alpha}$ with $0<\alpha\le\frac{2}{3}$ and zero bulk viscosity, we prove the nonlinear stability of linearly expanding GW solutions under radial perturbations.
    
    \medskip
    \textbf{Keywords:}
    Goldreich-Weber solutions; Navier-Stokes-Poisson system;
    nonlinear stability; degenerate viscosity.
\end{abstract}

\section{Introduction}
The evolution of viscous gaseous star can be described by the Navier-Stokes-Poisson equations
\begin{equation}\label{nsp1}
\begin{cases}
    &\partial_t\rho+\mathrm{div}(\rho \boldsymbol{u})=0\text{ in $\Omega(t)$},\\
&\partial_t(\rho\boldsymbol{u})+\mathrm{div}(\rho\boldsymbol{u}\otimes\boldsymbol{u})+\nabla p=\mathrm{div}T-\rho\nabla\varPhi\text{ in $\Omega(t)$},\\
    &\Delta\varPhi=4\pi\rho\text{ in $\mathbb{R}^3$ with }\lim_{|x|\rightarrow\infty}\varPhi(x)=0,\\
\end{cases}
\end{equation}
with boundary conditions
\begin{equation}\label{BC, general}
\begin{aligned}
    &\rho=0\text{ and }(p I_{3\times3}-T)\cdot\bnu=0\text{ on $\Gamma(t):=\partial\Omega(t)$}\\
\end{aligned}
\end{equation}
and initial data
\begin{equation}\label{ID, general}
     \rho(0,\bx)=\rho_0(\bx),\ \bu(0,\bx)=\bu_0(\bx)\text{ on }\Omega(0)=\Omega_0.
\end{equation}
We assume $\rho:\ \mathbb{R}^3\rightarrow\mathbb{R}_{\ge0}$ is the density initially supported on $\Omega_0\subset\subset\mathbb{R}^3$. As the system evolves, the fluid support $\Omega(t)$ changes in time, and there is no vacuum state in the interior of the domain, i.e., \begin{equation}
    \rho(t,\bx)>0\text{ in }\Omega(t).
\end{equation}
$\Gamma(t):=\partial\Omega(t)$ is the moving vacuum boundary. $\boldsymbol{u}:\ \mathbb{R}^3\rightarrow\mathbb{R}^3$ is the velocity, and $\mathcal{V}(\Gamma(t))$ denotes the normal velocity of $\Gamma(t)$ which satisfies
\begin{equation}
    \mathcal{V}(\Gamma(t))=\bu\cdot\bnu.
\end{equation}
$p$ is the pressure satisfying
\begin{equation}\label{Polytropic}
    p=p(\rho)=K\rho^{\gamma}\text{ with }\gamma\in(1,2),
\end{equation}
which is the equation of state for a polytropic gas, where $K>0$ is a constant set to be 1 for convenience. $\gamma$ is the adiabatic exponent. $\varPhi$ is the potential function of the self-gravitational force which admits the expression
\begin{equation*}
    \varPhi(t,\bx)=-\int_{\varOmega(t)}\frac{\rho(t,\by)}{|\bx-\by|}d\by
\end{equation*}
by solving the Poisson equation (\ref{nsp1})$_3$. The stress tensor $T$ is given by
\begin{equation}\label{StressTen}
    T=\mu_1(\rho)\left(\nabla\boldsymbol{u}+\nabla\boldsymbol{u}^T-\frac{2}{3}(\mathrm{div}\boldsymbol{u})I_{3\times3}\right)+\mu_2(\rho)(\mathrm{div}\boldsymbol{u})I_{3\times3},
\end{equation}
where we allow the viscosity coefficients to be dependent on the density. Here, $\mu_1(\rho)\ge0$ is the shear viscosity, and $\mu_2(\rho)\ge0$ is the bulk viscosity. One typical example is $\mu_1(\rho)=\tilde{\mu}_1\rho^{\alpha},\ \mu_2(\rho)=\tilde{\mu}_2\rho^{\alpha}$ with $\alpha>0$ \cite{LXY}. On the other hand, if $\mu_1(\rho)=0,\ \mu_2(\rho)=0$, then (\ref{nsp1}) models the inviscid self-gravitating gaseous stars, called the Euler-Poisson system.

A type of special vacuum state known as the physical vacuum boundary \cite{LY} is characterized by the fact that the interface between the fluid and the vacuum moves with a nontrivial finite normal acceleration, i.e.,
\begin{equation}\label{Eq: physical vacuum boundary}
    -\infty<\frac{\partial c^2}{\partial\bnu}<0\text{ for }x\in\Gamma(t),
\end{equation}
where $c=\sqrt{\frac{\partial\rho^\gamma}{\partial\rho}}$ denotes the sound speed. This type of boundary condition arises naturally in the study of compressible flows and has attracted much research interest. Note that $c$ is only $C^{\frac{1}{2}}$ at the boundary, making the analysis of compressible fluids with physical vacuum boundary challenging. We first discuss several special solutions admitting the physical vacuum boundary. Later in this section, we review some broader results on compressible viscous and inviscid problems involving physical vacuum boundary.

One of the most famous classes of special solutions to (\ref{nsp1}) is the Lane-Emden solutions, that is, the non-rotating steady solution of (\ref{nsp1}). We denote $(\bar{\rho}_\sigma(r),0)$ to be such a radially symmetric solution with central density $\sigma>0$. Then $\bar{\rho}_\sigma(r)$ satisfies the following ODE:
\begin{equation}\label{ode}
    \partial_r\bar{\rho}^{\gamma}_{\sigma}=-\frac{4\pi\bar{\rho}_{\sigma}}{r^2}\int_0^r\bar{\rho}_{\sigma}s^2ds.
\end{equation}
As for the solutions to (\ref{ode}), when $\gamma\in(\frac{6}{5},2)$, for given finite total mass, there exists at least one compactly supported solution to (\ref{ode}), while every solution is compactly supported and unique when $\gamma\in(\frac{4}{3},2)$. When $\gamma=\frac{6}{5}$, there exists a unique solution with explicit expression, which has infinite support. When $\gamma\in(1,\frac{6}{5})$, there are no solutions with finite mass \cite{LSS}. When $\gamma>\frac{6}{5}$, we denote the radius of the support of $\bar{\rho}_\sigma$ by $0<R_\sigma<+\infty$. We also denote the total mass and the total energy of $\bar{\rho}_\sigma$ by 

\begin{equation}
    M_{\sigma}=\int_0^{R_{\sigma}}4\pi\bar{\rho}_{\sigma}(r) r^2dr,
\end{equation}
\begin{equation}
    E_{\sigma}=\frac{1}{\gamma-1}\int_0^{R_{\sigma}}4\pi\bar{\rho}_{\sigma}^{\gamma} r^2dr-\int_0^{R_{\sigma}}4\pi\bar{\rho}_{\sigma}r\int_0^r4\pi\bar{\rho}_{\sigma}s^2dsdr.
\end{equation}
Scaling analysis reveals some important adiabatic exponents. To be specific, if $\bar{\rho}_1(r)$ is the Lane-Emden solution with central density 1, then $\beta^{\frac{2}{2-\gamma}}\bar{\rho}_1(\beta r)$ is also a Lane-Emden solution, with central density $\sigma=\beta^{\frac{2}{2-\gamma}}$. What's more \cite{Chan},
\begin{equation}
    M_{\sigma}=M_1\sigma^{\frac{3\gamma-4}{2}},\ R_{\sigma}=R_1\sigma^{\frac{\gamma-2}{2}},\ E_{\sigma}=E_1\sigma^{\frac{5\gamma-6}{2}}.
\end{equation}
We call the case $\gamma=\frac{4}{3}$ mass critical and the case $\gamma=\frac{6}{5}$ energy critical for the Lane-Emden equation. By setting $\rho_*(r):=R_1^{\frac{2}{2-\gamma}}\bar{\rho}_1(R_1r)$, we can rescale the domain to $[0,1]$. We define the enthalpy $w:=\rho_*^{\gamma-1}$, then (\ref{ode}) can be written as
\begin{equation}\label{ode2}
    w''(z)+\frac{2}{z}w'(z)+\frac{\gamma-1}{\gamma}4\pi w^{\frac{1}{\gamma-1}}(z)=0
\end{equation}
for $z\in[0,1]$. We note that the Lane-Emden solutions satisfy the physical vacuum boundary condition (\ref{Eq: physical vacuum boundary}), which translates into the fact that $w(z)$ behaves like the distance function to the vacuum boundary.

The stability of Lane-Emden solutions has attracted much research interest. At the mass critical adiabatic exponent, a transition of stability of Lane-Emden solutions occurs. In particular, the Lane-Emden solutions are linearly unstable for $\frac{6}{5}<\gamma<\frac{4}{3}$ and linearly stable for $\frac{4}{3}\le\gamma<2$ \cite{LSS}. In particular, $\gamma=\frac{4}{3}$ is neutrally stable. In this case, a type of nonlinear instability for the Euler-Poisson system was found \cite{DLYY}, in the sense that small perturbation of the Lane-Emden solutions may lead to mass escaping to infinity. Moreover, later in this section and in Subsection \ref{Subsection: Goldreich-Weber solutions}, we will see that the Goldreich-Weber solutions are explicit solutions to the Euler-Poisson equations that exhibit the nonlinear instability of mass critical Lane-Emden solutions. There are also various results for the nonlinear stability of Lane-Emden solutions. For instance, when $\gamma\in(\frac{4}{3},2)$, for the Euler-Poisson system, nonlinear conditional stability was proved in \cite{Rein} and unconditional orbital stability for weak solutions under radial perturbation was proved in \cite{LWZ}, while nonlinear stability for the Navier-Stokes-Poisson system under radial perturbation was proved in \cite{LXZ1,LXZ2} for constant viscosity and density-dependent viscosity respectively. When $\gamma\in(\frac{6}{5},\frac{4}{3})$, both the Euler-Poisson system \cite{J3} and the Navier-Stokes-Poisson system \cite{JT} are nonlinearly unstable. When $\gamma=\frac{6}{5}$, the Euler-Poisson system is nonlinearly unstable \cite{J1}. There are also some studies on the stability of gaseous stars with a more general pressure $p(\rho)$ satisfying $p(\rho)\approx\rho^{\gamma_0}$ near $\rho=0$, where transition of stability of the steady state happens at $\gamma_0=\frac{4}{3}$, which was showed in \cite{LZ} and \cite{CLW} via a turning point principle. See also \cite{LuoWangZeng2024} for the nonlinear asymptotic stability of steady states with general pressure law, which includes white dwarf and polytropes for all $\gamma>\frac{4}{3}$, for the constant viscosity Navier–Stokes–Poisson equations under radial perturbations.

In the mass critical case with pressure law $p(\rho)=\rho^{\frac{4}{3}}$, it has been proved in \cite{GW,Makino,FL} that the Euler-Poisson equations admits a special type of radial solutions with the form
\begin{equation}
    \begin{aligned}
        \begin{cases}
            \rho(t,\bx)=\lambda^{-3}(t)w^3(\frac{|\bx|}{\lambda(t)}),\\
            \bu(t,\bx)=\frac{\lambda'(t)}{\lambda(t)}\bx,
        \end{cases}
    \end{aligned}
\end{equation}
where the radial of support $\lambda(t)$ and the enthalpy $w(z)$ solve the ODE system
\begin{equation}\label{General LE}
    \begin{cases}
        \lambda^2(t)\lambda''(t)=\delta,\\
        w''(z)+\frac{2}{z}w'(z)+\pi w^3(z)=-\frac{3\delta}{4},
    \end{cases}
\end{equation}
where $w:[0,1]\rightarrow\mathbb{R}_{\ge0}$, with initial conditions of $\lambda(t)$
\begin{equation}\label{Initial condtions of lambda}
    \lambda(0)=\lambda_0>0,\ \lambda'(0)=\lambda_1
\end{equation}
and boundary conditions of $w(z)$
\begin{equation}\label{Boundary condtions of w}
    w'(0)=0,\ w(1)=0,
\end{equation}
which is called the Goldreich-Weber solutions (GW). We point out that the solution to (\ref{General LE})$_2$ also satisfies the physical vacuum boundary condition (\ref{Eq: physical vacuum boundary}). The long-time behaviors of the GW solutions depend on the parameters $(\lambda_0,\lambda_1,\delta)$ \cite{GW,Makino,FL}. In particular, when $\delta=0$, the enthalpy equation (\ref{General LE})$_2$ reduces to the classical Lane-Emden equation. In this case, $\lambda(t)=\lambda_0+\lambda_1t$. When $\delta>0$, the support expands to infinity in an asymptotically linear rate in time. While for $\delta^*<\delta<0$ for some $\delta^*<0$, there exists an escape velocity $\lambda^*_1>0$ such that when $\lambda_1>\lambda^*_1$, the support expands asymptotically linearly, when $\lambda_1=\lambda^*_1$, $\lambda(t)$ admits an explicit form and $\lambda(t)\sim_{t\rightarrow\infty}t^{\frac{2}{3}}$, which is referred to as self-similar expanding solutions, and when $\lambda_1<\lambda^*_1$, the support shrinks to a point in a finite time $T>0$ with $\lambda(t)\sim_{t\rightarrow T^-}k_1(T-k_2t)^{\frac{2}{3}}$. In \cite{HJ} and \cite{HJL}, the nonlinear stability of the self-similar expanding GW solutions and linearly expanding GW solutions for the Euler-Poisson equations under radial and non-radial perturbation were showed respectively. While for the Navier-Stokes-Poisson equations with constant viscosity, the instability of self-similar expanding solutions and nonlinear stability of linearly expanding GW solutions were proved in \cite{Liu2019Isentropic}. On the other hand, for the model of viscous radiation gaseous star, the existence of stationary solutions \cite{LXin2} and the stability of a class of linearly expanding homogeneous solutions \cite{LXin} were proved.

One related question is under what conditions is the GW solution a solution to the isentropic Navier-Stokes-Poisson equations (\ref{nsp1}) with $\gamma=\frac{4}{3}$ and whether it is stable in the vacuum free boundary framework for the Navier-Stokes-Poisson equations. In fact, the GW solution solves the continuity equation in (\ref{nsp1}), so what is left to check is the momentum equation. We can rewrite the momentum equation as follows
\begin{equation}\label{sfsfsfsfsff}
     \partial_t(\rho\boldsymbol{u})+\mathrm{div}(\rho\boldsymbol{u}\otimes\boldsymbol{u})+\nabla p+\rho\nabla\varPhi=\mathrm{div}T.
\end{equation}
Notice that the GW solutions make the left-hand side of (\ref{sfsfsfsfsff}) equal to 0, we only need to check whether $\mathrm{div}T=0$ for the GW solutions. By direct computation,
\begin{equation}
   T_{\rm GW}=\frac{3\lambda'(t)}{\lambda(t)}\mu_2(\rho)I_{3\times3}.
\end{equation}
When $\mu_2(\rho)=\mu_2$ is constant, $\mathrm{div}T_{GW}=0$. What's more, the boundary conditions (\ref{BC, general}) further forces the constant $\mu_2$ to vanish. Consequently, the GW solutions are special solutions to (\ref{nsp1}) with boundary conditions (\ref{BC, general}) if we assume zero bulk viscosity, i.e., $\mu_2(\rho)=0$. Our results focus only on the case 
\begin{equation}\label{Eq: parameter gamma, mu}
    \gamma=\frac{4}{3},\ \mu_1(\rho)=\mu\rho^{\alpha},\ \mu_2(\rho)=0
\end{equation}
with $\mu>0$ and $0<\alpha\le\frac{2}{3}$. Later in Remark \ref{Rmk: Choice of viscosity coefficient}, we will discuss the reason of restricting $\alpha$ to the range $(0,\frac{2}{3}]$. Our goal is to show that there exists a small enough $\tilde{\varepsilon}>0$ such that the linearly expanding GW solutions are nonlinearly stable under radial perturbation for $-\tilde{\varepsilon}<\delta$. The rigorous statement of our result is given by Theorem \ref{Thm: stability of GW, LG}. To the best of our knowledge, this is the first result showing the nonlinear stability of the linearly expanding GW solutions for the Navier-Stokes-Poisson equations with degenerate viscosity.

The study of this stability problem is closely connected with the theory of compressible flows with vacuum free boundaries, including both viscous and inviscid models, whose existence and dynamical properties have been extensively investigated in the literature. We refer to \cite{GLX,ZF,YZ} for the theory of global weak solutions to the free boundary value problem of compressible Navier-Stokes equations, and to \cite{DL,KL,CAO2026114572} for the corresponding theory for the Navier-Stokes-Poisson equations. The global existence of weak solutions to the Euler-Poisson equations in whole space was constructed in \cite{CHLWW,CCL} by passing to the weak limit of a family of solutions to the moving boundary Navier-Stokes-Poisson system as the viscosity tends to zero. In particular, the solutions constructed in the works \cite{CCL,CAO2026114572} allow initial data to be taken near the Lane-Emden stars. However, due to the singular behavior at the free boundary caused by the vacuum state, it is difficult to obtain higher regularity solutions, and standard methods for establishing well-posedness are not directly applicable to either the viscous or inviscid problems. In the meantime, by imposing the physical vacuum boundary (\ref{Eq: physical vacuum boundary}) to the initial density and introducing some weighted estimates which make use of the fact that $\rho^{\gamma-1}$ behaves like distance function to the free boundary, local well-posedness of compressible Euler equations \cite{JM0,CLS,CS1,CS2,JM}, Euler-Poisson equations \cite{GL,LXZ3} and shallow water equations \cite{LWX} can be proved. See also \cite{J2} for the local well-posedness of radial strong solutions to the Navier-Stokes-Poisson equations with constant viscosity for $\gamma>\frac{6}{5}$ and the recent work \cite{WangZhangZhu2026Jun} for the corresponding result for degenerate viscosity with $\gamma>\frac{4}{3}$. There are also various results for global strong solutions. For the compressible Euler equations, a broad family of expanding affine solutions\footnote{We note that GW solutions are a class of radially symmetric affine solutions.}, which is not necessarily radially symmetric, with physical vacuum boundary exists \cite{Sideris}. Moreover, global unique strong solutions to the isentropic compressible Euler equations near the affine solutions can be constructed \cite{HJ2,shkollerSideris}. See also \cite{HJ3} for a corresponding result on the Euler-Poisson system and \cite{Rickard} for results on non-isentropic system with heat convection around Dyson’s isothermal affine solutions. For global expanding solutions with small initial density but not necessarily close to affine solutions, we refer to \cite{PHJ}. See also \cite{Parmeshwar} for results on global expanding $N$-body solutions to the Euler-Poisson equations with small mass. For the viscous problem, the existence of global radial strong solutions to the Navier-Stokes-Poisson equations with $\frac{4}{3}<\gamma<2$ for both constant viscosity and density-dependent viscosity near the Lane-Emden stars were proved in \cite{LXZ1} and \cite{LXZ2} respectively. The existence of global radial classical solutions to the degenerate Navier-Stokes equations with large data was proved in the recent work \cite{CZZ}, where the authors consider the system with $\gamma\in(\frac{4}{3},\infty)$ in the 2-dimensional case and $\gamma\in(\frac{4}{3},3)$ in the 3-dimensional case. See also \cite{WangZhangZhu2026Jul} for a related result on the degenerate Navier–Stokes–Poisson equations.

\section{Lagrangian formulation and main results}
\subsection{Goldreich-Weber solutions}\label{Subsection: Goldreich-Weber solutions}
In this subsection, we summarize the properties of Goldreich-Weber solutions. The following Proposition guarantees the existence of solutions to (\ref{General LE})$_2$ for a range of $\delta$.
\begin{prop}[\cite{FL}]\label{Proposition: General LE, w}
    There exists a constant $\delta^*<0$ such that for any $\delta\ge\delta^*$ there exists solutions $w=w(z)$ to (\ref{General LE})$_2$ with $0<w<\infty$ in $[0,1)$ and $w$ satisfies boundary conditions (\ref{Boundary condtions of w}).
\end{prop}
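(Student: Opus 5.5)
The plan is a shooting argument for the radial ODE
\begin{equation*}
w''+\frac{2}{z}w'+\pi w^3=-\frac{3\delta}{4},
\end{equation*}
combined with the scaling symmetry of the Lane-Emden operator. Since the equation contains an inhomogeneous term, the problem cannot simply be rescaled to $[0,1]$. Instead, first solve the initial value problem
\begin{equation*}
w(0)=a>0,\qquad w'(0)=0.
\end{equation*}
Local existence near the singular point $z=0$ follows from the integral formulation
\begin{equation*}
w(z)=a-\int_0^z s^{-2}\int_0^s r^2\left(\pi w^3+\tfrac{3\delta}{4}\right)dr\,ds,
\end{equation*}
by a contraction argument. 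The solution then extends as long as $w$ stays bounded. Along the solution one has
\begin{equation*}
(z^2w')'=-z^2\left(\pi w^3+\tfrac{3\delta}{4}\right),
\end{equation*}
which gives monotonicity information on $w$.

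Next, use scaling to convert the problem into finding a first zero. If $w_a$ solves the equation with parameter $\delta$, then $v(y)=\lambda w_a(\lambda y)$ satisfies
\begin{equation*}
v''+\frac{2}{y}v'+\pi v^3=-\frac{3\delta\lambda^3}{4}.
\end{equation*}
So it suffices to find, for the normalized problem with central value $1$ and parameter $\kappa$, a first zero $z_0(\kappa)<\infty$ with $w>0$ on $[0,z_0)$. Rescaling by $\lambda=z_0(\kappa)$ then produces a solution on $[0,1]$ with boundary values $w'(0)=0$, $w(1)=0$ and parameter
\begin{equation*}
\delta=\kappa\, z_0(\kappa)^3.
\end{equation*}

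There are two cases.
\begin{itemize}
\item For $\kappa\ge 0$: $(z^2w')'\le -\pi z^2w^3$, so $w$ is decreasing. A comparison with the Lane-Emden equation for $\gamma=4/3$, which is $n=3<5$ and has compact support, shows that $w$ hits zero at a finite $z_0(\kappa)$, and no later than the Lane-Emden radius.
\item For $\kappa<0$ small: continuous dependence on parameters together with the transversal zero of the Lane-Emden solution ($w'(z_0)<0$) gives a zero $z_0(\kappa)$ depending continuously on $\kappa$.
\end{itemize}

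The map $\kappa\mapsto \kappa z_0(\kappa)^3$ is continuous on an interval containing $0$, equals $0$ at $\kappa=0$, and tends to $+\infty$ as $\kappa\to+\infty$. For large $\kappa$ this holds because $z_0\sim c\kappa^{-1/2}$, comparing with $w''+\frac{2}{z}w'=-\frac{3\kappa}{4}$, whose zero is $z=\sqrt{8/\kappa}$, so $\kappa z_0^3\sim \kappa^{-1/2}$. That asymptotic is the wrong direction, so I must re-examine it and instead vary the central value $a$ as the free parameter.

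The cleaner parametrization is to fix $\delta$ and shoot in $a$. By scaling, a solution with central value $a$ on $[0,z_0]$ rescales to $[0,1]$ with parameter $\delta z_0(a,\delta)^3$. One then seeks $a$ such that the first zero is exactly at $z=1$ for the given $\delta$. For $\delta\ge 0$ this works as follows:
\begin{itemize}
\item as $a\to 0^+$ the first zero tends to $\sqrt{8a/\delta}\to 0$ (or $\infty$ when $\delta=0$, which is handled by pure Lane-Emden scaling);
\item as $a\to\infty$, the Lane-Emden scaling gives a zero near $R_1/a\to 0$.
\end{itemize}
So an intermediate value argument in $a$ is needed, with care taken about which limits bracket $z=1$.

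The main obstacle is the case $\delta<0$. There the source term is positive, $w$ may fail to vanish, and solutions can stay positive and grow. I will define
\begin{equation*}
A_\delta=\{a: w_a \text{ has a first zero}\},
\end{equation*}
show that it is open, and show that the first zero $z_0(a)$ is continuous on $A_\delta$ by transversality: if $w(z_0)=0$ and $w'(z_0)=0$, uniqueness of the initial value problem would force a contradiction when $\delta\neq0$, unless $z_0$ is a local minimum, which the equation rules out. For $|\delta|$ small, large $a$ lies in $A_\delta$ because the nonlinear term dominates. I then run the continuity argument in $a$ to reach $z_0=1$. This is feasible only down to some threshold, which defines $\delta^*<0$. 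The delicate point is obtaining the bracketing values of $z_0(a)$ uniformly for $\delta\in[\delta^*,0)$. For this I would first try energy identities of Pohozaev type for the radial equation.
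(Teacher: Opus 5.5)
The paper gives no proof of this proposition; it is quoted from Fu--Lin. Your argument therefore has to stand on its own, and it has a genuine gap exactly where you flagged it for $\delta>0$. The gap cannot be closed.

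Write $Z(\kappa)$ for the first zero of the solution with $u(0)=1$, $u'(0)=0$ and parameter $\kappa$, and set $f(\kappa):=\kappa Z(\kappa)^3$. You found that $f(\kappa)\to0$ as $\kappa\to\infty$. You also found that, for fixed $\delta>0$, $z_0(a)\to0$ both as $a\to0^+$ and as $a\to\infty$. Neither is a slip, and shooting in $a$ instead of $\kappa$ changes nothing. By your own scaling, $z_0(a;\delta)=a^{-1}Z(\delta a^{-3})=\bigl(f(\kappa)/\delta\bigr)^{1/3}$ with $\kappa=\delta a^{-3}$, so $z_0(a;\delta)=1$ if and only if $\delta=f(\kappa)$.

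Now $Z$ is continuous on $[0,\infty)$. The zero is transversal because $z^2u'$ is strictly decreasing, and $Z(0)=R<\infty$ is the $n=3$ Lane--Emden radius. Moreover $Z(\kappa)\le\sqrt{8/\kappa}$, so $f(\kappa)\le 8^{3/2}\kappa^{-1/2}$ and $M:=\max_{\kappa\ge0}f(\kappa)<\infty$. Hence $\sup_a z_0(a;\delta)=(M/\delta)^{1/3}<1$ for $\delta>M$: there is nothing to bracket, and no solution exists at all.

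You can also see this directly. Any solution with $\delta>0$ satisfies $w'\le-\frac{\delta}{4}z$, so $a=w(0)\ge\delta/8$. Rescaling to central value $1$ gives parameter $\kappa=\delta/a^3\le 512/\delta^2$ and first zero $a=Z(\kappa)$. For large $\delta$ this is close to $R$, which contradicts $a\ge\delta/8$. So the statement as literally written (every $\delta\ge\delta^*$) cannot be proved. What is true is existence for all $\delta$ in a bounded interval $[\delta^*,M]$ around $0$; this interval is contained in the connected set $f([\kappa_1,\infty))$.

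For $\delta<0$, your transversality argument is incorrect. If $w(z_0)=w'(z_0)=0$, the equation gives $w''(z_0)=-\frac{3\delta}{4}>0$. That is precisely a strict local minimum touching $0$, which is compatible with $w>0$ on $[0,z_0)$. Uniqueness for the initial value problem gives no contradiction, since $w\equiv0$ does not solve the inhomogeneous equation. Such tangential zeros are exactly what occurs at the threshold where the first zero disappears, and continuity of $z_0$ fails there.

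Transversality holds only for $\kappa$ near $0$, by perturbation from the Lane--Emden zero where $u_0'(R)<0$, and that is all you need. For some small $\kappa_1<0$, $f$ is continuous on $[\kappa_1,0]$ with $f(0)=0$ and $f(\kappa_1)<0$. The intermediate value theorem then gives every $\delta\in[f(\kappa_1),0]$, with no Pohozaev identity or uniform bracketing. Your first parametrization by $\kappa$, which you abandoned, is the right one. It only needs to be combined with the observation that its range over $\kappa\ge0$ is the bounded interval $[0,M]$, and over a small interval $[\kappa_1,0]$ it covers $[\delta^*,0]$ with $\delta^*=f(\kappa_1)$.
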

The following Lemma shows the regularities of $w$.
\begin{lem}[\cite{HJ}]\label{Lemma: General LE}
    Let $w$ be a solution to (\ref{General LE})$_2$. Then $w\in C^{\infty}(0,1)$ and $w$ is analytic near $z=0$ and $z=1$. Moreover,
    \begin{enumerate}
        \item $w(z)=A_1+A_2z^2+O(z^4)$ for some constant $A_1,\ A_2$ and $w^{(2k+1)}(0)=0$ for any nonnegative integer $k\ge0$;
        \item $w$ satisfies the physical vacuum condition, i.e. $-\infty<w'(1)<0$.
    \end{enumerate}
\end{lem}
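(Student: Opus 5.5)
The lemma is local ODE analysis at the two singular or degenerate points of
\[
w''+\tfrac{2}{z}w'+\pi w^3=-\tfrac{3\delta}{4},\qquad w'(0)=0,\quad w(1)=0,
\]
with $w>0$ on $[0,1)$ as in Proposition \ref{Proposition: General LE, w}.

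\emph{Interior smoothness.} On $(0,1)$ the equation is a regular ODE with polynomial, hence analytic, nonlinearity and analytic coefficient $2/z$, since $z$ stays away from $0$. Bootstrapping regularity gives $w\in C^\infty(0,1)$, and Cauchy--Kovalevskaya even gives analyticity there.

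\emph{Behaviour near $z=0$.} I rewrite the equation as
\[
(z^2w')'=-z^2\bigl(\pi w^3+\tfrac{3\delta}{4}\bigr).
\]
Since $w'(0)=0$ and $w$ is bounded near $0$, integration gives
\[
w'(z)=-z^{-2}\int_0^z s^2\bigl(\pi w^3(s)+\tfrac{3\delta}{4}\bigr)\,ds .
\]
Hence $w$ is a fixed point of
\[
w(z)=A_1-\int_0^z r^{-2}\int_0^r s^2\bigl(\pi w^3+\tfrac{3\delta}{4}\bigr)\,ds\,dr,\qquad A_1=w(0).
\]
I then set $w(z)=g(z^2)$ and look for $g$ as a power series in $x=z^2$. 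The map above sends even power series to even power series: $\int_0^r s^{2k+2}\,ds$ is $r^{2k+3}/(2k+3)$, and after dividing by $r^2$ and integrating we get $r^{2k+2}/((2k+3)(2k+2))$. The coefficient recursion is therefore triangular, with
\[
g_{k+1}=-\frac{[\pi g^3+\tfrac{3\delta}{4}]_k}{(2k+2)(2k+3)}.
\]
A majorant-series argument, or a contraction in a weighted space of analytic functions on a small disk, gives convergence. Uniqueness of the solution to the integral equation, by a Lipschitz/Gronwall estimate, identifies this series with $w$. This yields analyticity at $0$, evenness (all odd derivatives vanish), and $w=A_1+A_2z^2+O(z^4)$ with $A_2=-(\pi A_1^3+3\delta/4)/6$.

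\emph{Behaviour near $z=1$.} Since $w\ge0$ is continuous on $[0,1]$ with $w(1)=0$, the equation shows $w''$ is bounded near $1$. So $w\in C^2$ up to $z=1$, and $w'(1)$ is finite. The point $z=1$ is a regular point of the ODE because $2/z$ is analytic there. The solution is therefore the unique solution of the regular initial value problem with data $w(1)=0$, $w'(1)$. By Cauchy--Kovalevskaya this solution is analytic near $1$.

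\emph{Strict sign of $w'(1)$.} It remains to show $w'(1)<0$. Since $w>0$ on $[0,1)$ and $w(1)=0$, we have $w'(1)\le0$. Suppose $w'(1)=0$. Then $w''(1)=-3\delta/4$.
\begin{itemize}
\item If $\delta>0$, then $w''(1)<0$, so $w$ is negative just to the left of $1$, a contradiction.
\item If $\delta=0$, uniqueness for the regular IVP with zero data forces $w\equiv0$, a contradiction.
\item If $\delta<0$, the expansion near $1$ is $w\approx -\tfrac{3\delta}{8}(1-z)^2\ge0$, so positivity alone does not rule this case out.
\end{itemize}

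For $\delta<0$ I would use the integral identity instead. Integrating $(z^2w')'=-z^2(\pi w^3+3\delta/4)$ over $[0,1]$ gives
\[
w'(1)=-\int_0^1 s^2\bigl(\pi w^3+\tfrac{3\delta}{4}\bigr)\,ds=-\pi\int_0^1 s^2w^3\,ds-\tfrac{\delta}{4}.
\]
The claim $w'(1)<0$ is then equivalent to $\pi\int_0^1 s^2w^3\,ds>|\delta|/4$. This is the main obstacle. It must come from the construction in \cite{FL}, for instance by taking $\delta^*$ close to $0$ and using continuity of $w$ in $\delta$ near the Lane--Emden profile, where $\int_0^1 s^2 w^3\,ds>0$. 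Alternatively one could appeal to the defining property of $\delta^*$ in that reference.

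If that route fails, I would argue by ODE uniqueness from $z=1$ instead. The solution with $w(1)=0$, $w'(1)=0$ and $\delta<0$ is the explicit even-symmetric-in-$(1-z)$ behaviour. One then needs to show it cannot reach $z=0$ with $w'(0)=0$ while remaining positive, presumably again for $|\delta|$ small by perturbation from $\delta=0$. Finiteness of $w'(1)$, the other inequality in the physical vacuum condition, is already given by the $C^2$ regularity above.
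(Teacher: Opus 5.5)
Your arguments for interior smoothness, the even power series at $z=0$ (with $A_2=-\bigl(\pi A_1^3+\tfrac{3\delta}{4}\bigr)/6$), analyticity at $z=1$ (a regular point, since $w^3$ is a polynomial), finiteness of $w'(1)$, and $w'(1)<0$ for $\delta\ge0$ are fine. For finiteness, boundedness of $w'$ near $1$ comes from your formula for $z^2w'$. For $\delta\ge0$, the sign also follows in one line from $w'(1)=-\pi\int_0^1 s^2w^3\,ds-\tfrac{\delta}{4}$. The paper itself gives no proof and imports the lemma from \cite{HJ}.

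The genuine gap is the case $\delta<0$. You reduce it to $\pi\int_0^1 s^2w^3\,ds>|\delta|/4$ and then only point to \cite{FL}, and neither pointer closes it. Continuity in $\delta$ from the Lane--Emden profile controls only the perturbative branch, whereas the lemma is about an arbitrary solution and no uniqueness is available. The defining property of $\delta^*$ says nothing about transversality.

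Worse, for general $\delta<0$ the strict inequality is false, so any proof must use smallness of $|\delta|$. Normalize $-\tfrac{3\delta}{4}=1$ and shoot from $w(0)=a$, $w'(0)=0$. The energy $\tfrac12 w'^2+\tfrac{\pi}{4}w^4-w$ is nonincreasing, so the solution never vanishes if $a^3\le 4/\pi$. For large $a$ it vanishes transversally (Lane--Emden limit). At the threshold value of $a$ the first zero is a double zero. Rescaling it to $z=1$ yields a solution of (\ref{General LE})$_2$ with $w'(0)=0$, $w>0$ on $[0,1)$, some $\delta<0$, and $w'(1)=0$.

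What can be proved, and all the paper uses, is $w'(1)<0$ for $-\tilde\varepsilon<\delta<0$, by an argument valid for every solution. For instance, set $E=\tfrac12 w'^2+V(w)$ with $V(w)=\tfrac{\pi}{4}w^4+\tfrac{3\delta}{4}w$. Then $E'=-\tfrac{2}{z}w'^2\le0$ and $E(1)\ge0$, so $V(A_1)\ge0$, i.e.\ $A_1=w(0)\ge(3|\delta|/\pi)^{1/3}$. This lies to the right of the minimum point of $V$. Since $V(w(z))\le E(0)=V(A_1)$, you get $0\le w\le A_1$ on $[0,1]$. Your formula for $w'$ then yields $w'(r)\ge-\pi A_1^3 r/3$. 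Hence $A_1=w(0)-w(1)\le\pi A_1^3/6$, i.e.\ $A_1^2\ge 6/\pi$, and $w\ge A_1/2$ on $[0,r_0]$ with $r_0=\sqrt{3/(\pi A_1^2)}<1$. Consequently
\[
\pi\int_0^1 s^2w^3\,ds\ \ge\ \frac{\pi A_1^3}{8}\cdot\frac{r_0^3}{3}=\frac{\sqrt{3}}{8\sqrt{\pi}},
\]
uniformly in $A_1$. So $w'(1)\le-\frac{\sqrt3}{8\sqrt\pi}+\frac{|\delta|}{4}<0$ whenever $|\delta|<\frac{\sqrt3}{2\sqrt\pi}$.
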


The following Proposition shows the properties of solutions to (\ref{General LE})$_1$, which characterizes the behavior of the support as time goes by.
\begin{prop}[\cite{FL},\cite{Makino},\cite{HJ}]\label{Proposition: General LE, lambda}Let $\lambda(t)$ be the solution to (\ref{General LE})$_1$ with initial conditions (\ref{Initial condtions of lambda}).
    \begin{enumerate}
        \item If $\delta>0$, then $\lambda(t)>0$ for all $t>0$ with $\lim_{t\rightarrow\infty}\lambda(t)=\infty$. Moreover, there exist $c_1,c_2>0$ such that
        \begin{equation}\label{linear expanding rate for lambda}
            \lambda(t)\sim_{t\rightarrow\infty}c_1(1+c_2t).
        \end{equation}
        \item If $\delta=0$, then $\lambda(t)=\lambda_0+\lambda_1t$.
        \item If $\delta<0$, let
        \begin{equation}
            \lambda^*_1=\sqrt{\frac{2|\delta|}{\lambda_0}}.
        \end{equation}
        \begin{enumerate}
            \item If $\lambda_1=\lambda^*_1$, then $\lambda(t)>0$ for all $t>0$, and
            \begin{equation}\label{Eq: self-similar lambda}
                \lambda(t)=\left(\lambda_0^{\frac{3}{2}}+\frac{3}{2}\lambda^{\frac{1}{2}}_0\lambda_1t\right)^{\frac{2}{3}},\ t\ge0.
            \end{equation}
            \item If $\lambda_1>\lambda^*_1$, then $\lim_{t\rightarrow\infty}\lambda(t)=\infty$, and there exist $c_1,c_2>0$ such that (\ref{linear expanding rate for lambda}) holds.
            \item If $\lambda_1<\lambda^*_1$, then there is a time $0<T<\infty$ such that $\lambda(t)>0$ in $(0,T)$ and $\lambda(t)\rightarrow0$ as $t\rightarrow T^{-}$. Moreover, there exist constants $k_1,k_2>0$ such that
            \begin{equation}
                \lambda(t)\sim_{t\rightarrow T^-}k_1(T-k_2t)^{\frac{2}{3}},\ t\ge0.
            \end{equation}
        \end{enumerate}
    \end{enumerate}
\end{prop}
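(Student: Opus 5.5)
The plan is to use the conserved energy of the one-dimensional equation $\lambda^2\lambda''=\delta$. Multiplying by $\lambda'/\lambda^2$ and integrating gives
\begin{equation*}
\frac{1}{2}(\lambda')^2+\frac{\delta}{\lambda}=\frac{1}{2}\lambda_1^2+\frac{\delta}{\lambda_0}=:H,
\end{equation*}
valid as long as $\lambda>0$. Every case then reduces to the sign of $H$ and of $\delta$. Case 2 ($\delta=0$) is immediate, since $\lambda''=0$.

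\textbf{Case $\delta>0$.} Here $\lambda''>0$, so $\lambda'$ is increasing. Also $H>0$, and $(\lambda')^2=2H-2\delta/\lambda$.

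If $\lambda_1<0$, then $\lambda$ decreases, and $\lambda'$ must reach zero at $\lambda=\delta/H>0$. This follows because $(\lambda')^2\ge 0$ forces $\lambda\ge\delta/H$ for as long as the solution exists, so $\lambda$ stays positive. After that turning time $\lambda'>0$ and $\lambda$ is increasing. Hence in all subcases $\lambda(t)>0$ for all $t$, and eventually $\lambda\to\infty$: an increasing $\lambda$ with $\lambda'$ increasing and positive cannot stay bounded.

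Then $\lambda'(t)\to\sqrt{2H}>0$. Moreover $\lambda'-\sqrt{2H}=O(1/\lambda)=O(1/t)$, so $\lambda(t)=\sqrt{2H}\,t+O(\log t)$. This yields $\lambda\sim c_1(1+c_2t)$ in the sense of asymptotic equivalence, with $c_1c_2=\sqrt{2H}$. The same argument gives case 3(b), since $\lambda_1>\lambda_1^*$ is exactly $H>0$; there $\lambda''<0$, but $\lambda'$ stays bounded below by $\sqrt{2H}$.

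\textbf{Case 3(a) ($\lambda_1=\lambda_1^*$).} Here $H=0$ and $\lambda'=\sqrt{2|\delta|/\lambda}$, where the positive root is kept by continuity. Separating variables gives
\begin{equation*}
\lambda^{1/2}\,d\lambda=\sqrt{2|\delta|}\,dt,
\end{equation*}
so $\lambda^{3/2}=\lambda_0^{3/2}+\tfrac32\sqrt{2|\delta|}\,t$. Using $\sqrt{2|\delta|}=\lambda_0^{1/2}\lambda_1$ recovers the stated formula. Uniqueness of the ODE solution (Lipschitz away from $\lambda=0$) identifies this with $\lambda(t)$.

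\textbf{Case 3(c) ($H<0$).} The motion is bounded: $\lambda\le |\delta|/|H|$. Since $\lambda''\le\delta/\lambda_{\max}^2<0$ uniformly, $\lambda'$ becomes negative in finite time and stays negative. With $\lambda'=-\sqrt{2H+2|\delta|/\lambda}$, the time to reach $0$ is finite, because $\int_0 \lambda^{1/2}\,d\lambda<\infty$. Near the collapse time $T$ we have
\begin{equation*}
\lambda'\approx-\sqrt{2|\delta|}\,\lambda^{-1/2},
\end{equation*}
so $\lambda^{3/2}\approx\tfrac32\sqrt{2|\delta|}\,(T-t)$. This gives $\lambda\sim k_1(T-t)^{2/3}$; the paper's $k_2$ can be taken as $1$, or is absorbed.

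\textbf{Main obstacle.} The only delicate point is the precise meaning of the linear asymptotics in the expanding cases. I would state them as $\lambda(t)/(1+c_2t)\to c_1$, and also record $\lambda'\to\sqrt{2H}$, which the later stability analysis needs.
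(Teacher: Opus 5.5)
The paper does not prove this proposition; it quotes it from \cite{FL,Makino,HJ}. The argument there is the first integral $\frac12(\lambda')^2+\delta/\lambda=H$ that you use, so your route is the standard one. Cases 1, 2, 3(a), 3(b) and the $H<0$ part of 3(c) go through, modulo the small points below.

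\textbf{Gap in case 3(c).} For $\delta<0$ one has $H=\frac12\lambda_1^2-|\delta|/\lambda_0>0$ if and only if $|\lambda_1|>\lambda_1^*$, not $\lambda_1>\lambda_1^*$ as you claim. So the hypothesis $\lambda_1<\lambda_1^*$ of 3(c) also contains the inward-moving data $\lambda_1\le-\lambda_1^*$, for which $H\ge 0$. Your 3(c) argument is written only for $H<0$: it relies on the bound $\lambda\le|\delta|/|H|$ and on a turning point. These subcases are therefore not covered.

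The repair is short. If $\lambda_1\le-\lambda_1^*<0$, then $\lambda''=\delta/\lambda^2<0$ gives $\lambda'\le\lambda_1<0$. Hence $\lambda(t)\le\lambda_0+\lambda_1t$, and by monotonicity plus the ODE bounds, $\lambda\to0$ at some $T\le\lambda_0/|\lambda_1|$.

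For the collapse rate in every subcase, use the exact identity, valid once $\lambda'<0$,
\begin{equation*}
\frac{d}{dt}\lambda^{3/2}=\frac32\lambda^{1/2}\lambda'=-\frac32\sqrt{2H\lambda+2|\delta|}\;\longrightarrow\;-\frac32\sqrt{2|\delta|}\quad(t\to T^-),
\end{equation*}
which holds whatever the sign of $H$. Integrating over $[t,T]$ gives $\lambda^{3/2}(t)/(T-t)\to\frac32\sqrt{2|\delta|}$, that is, $k_2=1$ and $k_1=(9|\delta|/2)^{1/3}$. This identity also makes your ``$\approx$'' step in the $H<0$ case rigorous.

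\textbf{Two smaller points.}
\begin{enumerate}
\item In case 1 with $\lambda_1<0$, the bound $\lambda\ge\delta/H$ rules out collapse but does not by itself show that $\lambda'$ vanishes in finite time. Add that $\lambda''=\delta/\lambda^2\ge\delta/\lambda_0^2$ while $\lambda\le\lambda_0$, so the turning time is at most $|\lambda_1|\lambda_0^2/\delta$.
\item Global existence in cases 1 and 3(b) should be stated explicitly. In case 1, $\lambda$ is bounded below by $\delta/H$ and $(\lambda')^2\le 2H$. In case 3(b), $\lambda\ge\lambda_0$ and $(\lambda')^2\le 2H+2|\delta|/\lambda_0$. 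Either way the locally Lipschitz ODE cannot break down in finite time.
\end{enumerate}
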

The term "linearly expanding GW solutions" refers to one of the following three cases:
\begin{itemize}
    \item $\delta>0$,
    \item $\delta=0$ with $\lambda_1>0$,
    \item $\delta^*<\delta<0$ with $\lambda_1>\lambda_1^*$, 
\end{itemize}
while the term "self-similar expanding GW solutions" refers to the case 3 (a) in Proposition \ref{Proposition: General LE, lambda}.

\subsection{Lagrangian formulation}
We introduce the Lagrangian coordinates and reformulate the problem into the perturbation framework. Consider the system (\ref{nsp1})-(\ref{StressTen}) with (\ref{Eq: parameter gamma, mu}) and let $\bet(t,\bx): B_1(0)\rightarrow\Omega(t)$ be the Lagrangian flow map satisfying 
\begin{equation}
    \begin{aligned}
        \frac{d\bet}{dt}(t,\bx)=\bu(t,\bet(t,\bx)),\\
        \bet(0,\bx)=\bet_0(\bx),
    \end{aligned}
\end{equation}
where $\bet_0(\bx):\ B_1(0)\rightarrow\Omega(0)$ is a diffeomorphism with positive Jacobian determinant. In our problem, we only consider radial perturbation. As a result, we can assume
\begin{equation}
    \bet(t,\bx)=\chi(t,z)\bx,\ z=|\bx|,
\end{equation}
where $\chi:\ \mathbb{R}_{\ge0}\times[0,1]\rightarrow\mathbb{R}_{\ge0}$. The Jacobian of $\bet(t,\bx)$ is given by
\begin{equation}
    \scrJ:=\det[D\bet]=\chi^2(\chi+z\partial_z\chi).
\end{equation}
We restrict ourselves to radial perturbations having the same total mass as the GW solution. Without loss of generality, we assume $\Omega_0=B_1(0)$ and choose $\bet_0$ such that
\begin{equation}
    w^3=\rho_0\chi_0^2(\chi_0+z\partial_z\chi_0),
\end{equation}
where $w$ is the solution to the enthalpy equation in (\ref{General LE})$_2$. The existence of such $\bet_0$ is ensured by \cite{DacorognaMoser}. Then, the system (\ref{nsp1})-(\ref{StressTen}) can be rewritten as
\begin{equation}\label{chiEqn}
\begin{aligned}
    \pt^2\chi+F_w[\chi]=G_{w,\alpha}[\chi],
    \end{aligned}
\end{equation}
where
\begin{equation}
    \begin{aligned}
        &F_w[\chi]=\frac{\chi^2}{zw^3}\pz\left(w^4\scrJ^{-\frac{4}{3}}\right)+\frac{1}{z^3\chi^2}\int_0^z4\pi w^3s^2ds,\\
        &G_{w,\alpha}[\chi]=\frac{1}{\chi w^3z^4}\pz\left(\frac{4}{3}\mu z^3\chi^2\left(w^3\scrJ^{-1}\right)^{\alpha}\frac{\chi\cdot z\pt\pz\chi-z\pz\chi\pt\chi}{\chi+z\pz\chi}\right),
    \end{aligned}
\end{equation}
with boundary condition
\begin{equation}
    \left.\frac{4}{3}\mu\left(w^3\scrJ^{-1}\right)^{\alpha}\left(\frac{\chi\cdot z\pt\pz\chi-\pt\chi\cdot z\pz\chi}{\chi(\chi+z\pz\chi)}\right)\right|_{z=1}=0.
\end{equation}
Full details of the derivation of (\ref{chiEqn}) are given in Appendix \ref{Chapter: Appendix Reform}. We can derive the following estimate for physical energy-dissipation identity:
\begin{equation}
    \begin{aligned}
        &\frac{d}{dt}\left\{\intol\frac{1}{2}(\pt\chi)^2w^3z^4dz+\intol3w^4z^2\scrJ^{-\frac{1}{3}}dz-\intol\frac{zw^3}{\chi}\left(\int_0^z4\pi w^3s^2ds\right)dz\right\}\\
        &+\intol\frac{4}{3}\mu z^2\left(w^3\scrJ^{-1}\right)^{\alpha}\frac{(\chi\cdot z\pt\pz\chi-\pt\chi\cdot z\pz\chi)^2}{(\chi+z\pz\chi)}dz=0.
    \end{aligned}
\end{equation}

Next we introduce the unknown $\xi$ as follows
\begin{equation}
    \xi(t,z)=\frac{\chi(t,z)}{\lambda(t)},\ J=\xi^2(\xi+z\pz\xi),
\end{equation}
where $\lambda(t)$ is the radius of support of the linearly expanding GW solution. By direct computation, 
\begin{equation}
    \begin{aligned}
        &\pt\chi=\lambda'(t)\xi+\lambda\pt\xi,\\
        &\pt^2\chi=\lambda''(t)\xi+2\lambda'(t)\pt\xi+\lambda\pt^2\xi,\\
        &\scrJ=(\lambda\xi)^2(\lambda\xi+z\pz(\lambda\xi))=\lambda^3J,
    \end{aligned}
\end{equation}
so (\ref{chiEqn}) can be rewritten as
\begin{equation}\label{xiEqn}
    \lambda^2\pt^2(\lambda\xi)+F_w[\xi]=\lambda^{4-3\alpha}G_{w,\alpha}[\xi]
\end{equation}
with boundary condition
\begin{equation}
    \left.\frac{4}{3}\mu\left(w^3J^{-1}\right)^{\alpha}\left(\frac{\xi\cdot z\pt\pz\xi-\pt\xi\cdot z\pz\xi}{\xi(\xi+z\pz\xi)}\right)\right|_{z=1}=0.
\end{equation}

To reflect the expected linear growth rate of the perturbed solution, we introduce a new time variable
\begin{equation}
    \tau(t)=\int_0^t\frac{1}{\lambda(\sigma)}d\sigma,
\end{equation}
then $\pt=\frac{1}{\tilde{\lambda}(\tau)}\partial_{\tau}$, where $\lamt(\tau):=\lambda(t(\tau))$. In the $\tau$ coordinate, (\ref{General LE})$_1$ is written as
\begin{equation}\label{Eq: ode of lamt}
    \lamt'=\text{sgn}(\lamt')\lamt\sqrt{\tilde{e}-\frac{2\delta}{\lamt}}, 
\end{equation}
where we let $\tilde{e}=\lambda_1^2+\frac{2\delta}{\lambda_0}$. We point out that in the linearly expanding regime, $\lamt'(\tau)$ will eventually become positive, even though for the $\delta>0$ case where initial velocity of GW solution $\lamt_1$ can be taken negative. Then (\ref{Eq: ode of lamt}) implies
\begin{equation}
    \lamt(\tau)\sim_{\tau\rightarrow\infty}\exp({\tilde{\beta}\tau)}
\end{equation}
for $\tilde{\beta}=\sqrt{\tilde{e}}$, which gives
\begin{equation}\label{Eq: behaviour of lamt', lamt''}
    b_0\le\frac{\lamt'}{\lamt}\le b_1,\ b_0\le\frac{\lamt''}{\lamt'}\le b_1
\end{equation}
for some constants $b_0,b_1>0$ and $\tau>0$ sufficiently large. Up to a translation of time, we can assume without loss of generality that (\ref{Eq: behaviour of lamt', lamt''}) holds for all $\tau\ge0$ in the rest of this paper. Next, we define the perturbed variable
\begin{equation}
    \varphi(\tau,z):=\xi(t(\tau),z)-1,
\end{equation}
then the Jacobian can be written in terms of $\varphi$ as $J=(\Flm)^2(\Flm+z\pz\varphi)$. Make use of (\ref{General LE}) and the fact that $\lambda^2\pt^2\lambda=\delta$, (\ref{xiEqn}) becomes
\begin{equation}\label{phiEqn}
    \lamt\ptau^2\varphi+\ptau\lamt\ptau\varphi+\delta(1+\varphi)+F_w[\Flm]=\lamt^{3-3\alpha} V_{w,\alpha}[\varphi],
\end{equation}
where $F_w[1+\varphi]$ and $V_{w,\alpha}[\varphi]$ have the form
\begin{equation}\label{Eq: F, G in varphi}
\begin{aligned}
    F_w[1+\varphi]=&\frac{(\Flm)^2}{zw^3}\pz\left(w^4J^{-\frac{4}{3}}\right)-\frac{1}{(\Flm)^2}\cdot\left(\delta+\frac{4w'}{z}\right),\\
    V_{w,\alpha}[\varphi]:=&G_{w,\alpha}[1+\varphi]\\
    =&\frac{1}{z^4w^3(\Flm)}\pz\left(\frac{4}{3}\mu z^3(1+\varphi)^2(w^3J^{-1})^\alpha\frac{(1+\varphi)z\ptau\pz\varphi-\ptau\varphi\cdot z\pz\varphi}{1+\varphi+z\pzphi}\right).\\
\end{aligned}
\end{equation}
What's more, boundary condition (\ref{BC, general}) can be written as
    \begin{equation}\label{BC, phi}
        \left.\frac{4}{3}\mu(w^3J^{-1})^\alpha\frac{(1+\varphi)z\ptau\pz\varphi-z\pz\varphi\ptauphi}{1+\varphi+z\pz\varphi}\right|_{z=1}=0.
    \end{equation}

Under $(\tau,z)$ coordinates, the physical energy-dissipation identity admits the expression
\begin{equation}\label{Eq: physical energy identity}
    \begin{aligned}
    E(1+\varphi,\ptauphi)(\tau)+\int_0^\tau D(\varphi,\ps\varphi)(s)ds=E(1+\varphi_0,\varphi_1),
    \end{aligned}
\end{equation}
where
\begin{equation}
    \begin{aligned}
        E(1+\varphi,\ptauphi)=&\intol\frac{1}{2}\left(\frac{\lamt'}{\lamt}(\Flm)+\ptauphi\right)^2w^3z^4dz\\
        &+\frac{1}{\lamt}\intol\left(3w^4z^2J^{-\frac{1}{3}}+\frac{4w'}{z}\frac{w^3z^4}{\Flm}\right)dz+\frac{1}{\lamt}\intol\frac{\delta w^3z^4}{\Flm}dz,
    \end{aligned}
\end{equation}
and
\begin{equation}
    D(\varphi,\ptauphi)=\lamt^{2-3\alpha}\intol\frac{4}{3}\mu(w^3J^{-1})^{\alpha}z^2\frac{[(\Flm)z\ptauzphi-\ptauphi\cdot z\pzphi]^2}{\Flm+z\pzphi}dz.
\end{equation}
In particular, the physical energy and dissipation of the GW solution are $E(1,0)=\intol\frac{1}{2}\tilde{e}w^3z^4dz$ and $D(1,0)=0$ respectively.

The next goal is to rewrite (\ref{phiEqn}) into linear part and nonlinear part. For the Jacobian $J$, we have 
\begin{equation}
    J=(\Flm)^2(\Flm+z\pz\varphi)=1+\frac{1}{z^2}\pz\left(z^3(\varphi+\varphi^2+\frac{1}{3}\varphi^3)\right).
\end{equation}
For the left-hand side of (\ref{phiEqn}), apply Taylor's formula to $J^{-\frac{4}{3}}$ to obtain
\begin{equation}
    J^{-\frac{4}{3}}=1-\frac{4}{3}(J-1)+(J-1)^2\intol\frac{28}{9}\left(1+\theta(J-1)\right)^{-\frac{10}{3}}(1-\theta)d\theta.
\end{equation}
Insert it into (\ref{Eq: F, G in varphi})$_1$, we get
\begin{equation}\label{Eq: F_w in varphi}
    \begin{aligned}
        F_w[1+\varphi]=-\delta+2\delta\varphi+\mathcal{L}_{w^4}[\varphi]+\mathcal{N}_1[\varphi],
    \end{aligned}
\end{equation}
where we denote
\begin{equation}\label{Important linear operator}
    \mathcal{L}_{w^k}\varphi:=-\frac{4}{3}\cdot\frac{1}{z^4w^3}\pz(w^kz^4\pzphi)
\end{equation}
and
\begin{equation}\label{Eq: nonlinearity, N1}
    \begin{aligned}
        \mathcal{N}_1[\varphi]=&\delta\left(1-2\varphi-\frac{1}{(\Flm)^2}\right)-\frac{4}{3}\cdot\frac{(\Flm)^2-1}{zw^3}\pz\left(\frac{w^4}{z^2}\pz\left(z^3\varphi\right)\right)\\
        &-\frac{4}{3}\cdot\frac{(\Flm)^2}{zw^3}\pz\left[\frac{w^4}{z^2}\pz\left(z^3(\varphi^2+\frac{1}{3}\varphi^3)\right)\right]\\
        &+\frac{(\Flm)^2}{zw^3}\pz\left(\frac{28}{9}w^4(J-1)^2\intol(1-\theta)[1+\theta(J-1)]^{-\frac{10}{3}}d\theta\right)\\
        &+\frac{4w'}{z}\left((\Flm)^2-\frac{1}{(\Flm)^2}-4\varphi\right).\\
         \end{aligned}
\end{equation}
Inserting (\ref{Eq: F_w in varphi}) into (\ref{phiEqn}), we obtain the following form of the equation to be used in the temporal and interior estimates
\begin{equation}\label{phiEqn: linearized, G}
    \begin{aligned}
        \lamt\ptau^2\varphi+\lamt'\ptau\varphi+3\delta\varphi+\mathcal{L}_{w^4}\varphi+\mathcal{N}_1[\varphi]=\lamt^{3-3\alpha} V_{w,\alpha}[\varphi].
    \end{aligned}
\end{equation}
Further, the viscosity term (\ref{Eq: F, G in varphi})$_2$ can be written as 
\begin{equation}\label{Eq: Linearized, G term}
    \begin{aligned}
         V_{w,\alpha}[\varphi]=-\mu\mathcal{L}_{w^{3\alpha}}\ptauphi+\mathcal{N}_2[\varphi],
    \end{aligned}
\end{equation}
where
\begin{equation}\label{Eq: N_2}
    \begin{aligned}
        \mathcal{N}_2[\varphi]=&\frac{1}{z^4w^3}\left(\frac{1}{\Flm}-1\right)\pz\left(\frac{4}{3}\mu z^4w^{3\alpha}\ptau\pz\varphi\right)\\
        &+\frac{1}{z^4w^3(\Flm)}\pz\left(\frac{4}{3}\mu z^3w^{3\alpha}(J^{-\alpha-1}-1)z\ptau\pz\varphi\right)\\
        &+\frac{1}{z^4w^3(\Flm)}\pz\left(\frac{4}{3}\mu z^3w^{3\alpha}J^{-\alpha-1}([(1+\varphi)^5-1]z\ptau\pz\varphi-(\Flm)^4z\pz\varphi\ptauphi)\right).
    \end{aligned}
\end{equation}
We then derive the equation to be analyzed in the elliptic estimate
\begin{equation}\label{phiEqn: linearized}
    \begin{aligned}
        \lamt\ptau^2\varphi+\lamt'\ptau\varphi+3\delta\varphi+\mathcal{L}_{w^4}\varphi+\mathcal{N}_1[\varphi]=-\mu\lamt^{3-3\alpha}\mathcal{L}_{w^{3\alpha}}\ptauphi+\lamt^{3-3\alpha}\mathcal{N}_2[\varphi].
    \end{aligned}
\end{equation}

\subsection{Main results}
To characterize the stability of the linearly expanding solution, we introduce the total energy:
\begin{equation}\label{total energy}
        \begin{aligned}
            \mathfrak{E}(\tau)=\mathcal{E}_{\rm tem}(\tau)+\mathcal{E}_{\rm int}(\tau)+\mathcal{E}_{\rm spt}(\tau),
        \end{aligned}
    \end{equation}
where
\begin{equation}
    \begin{aligned}
       \mathcal{E}_{\rm tem}(\tau)=&\intol\lamt(\ptau\varphi)^2w^3z^4+(z\pzphi)^2w^4z^2+\varphi^2w^3z^4dz\\
       &+\lamt^{3\alpha-2}\left(\intol\lamt(\ptau^{2}\varphi)^2w^3z^4+(z\ptau\pzphi)^2w^4z^2+(\ptau\varphi)^2w^3z^4dz\right),\\
       \mathcal{E}_{\rm int}(\tau)=&\lamt^{3\alpha-2}\intol\lamt\hat{\chi}(\ptau\varphi)^2w^3z^2+\hat{\chi}(z\pzphi)^2w^4+\hat{\chi}\varphi^2w^3z^2dz\\
       &+\lamt^{6\alpha-4}\left(\intol\lamt\hat{\chi}(\ptau^{2}\varphi)^2w^3z^2+\hat{\chi}(z\ptau\pzphi)^2w^4dz+\hat{\chi}(\ptau\varphi)^2w^3z^2dz\right),\\
       \mathcal{E}_{\rm spt}(\tau)=&\lamt^{3\alpha-2}\intol\mu\left(\frac{1}{w^3z^4}\pz\left(w^{3\alpha}z^4\pz\varphi\right)\right)^2w^{7-3\alpha}z^{2}dz\\
       &+\intol\mu^2\left(\frac{1}{w^3z^4}\pz\left(w^{3\alpha}z^4\pzphi\right)\right)^2w^{3}z^{2}dz,
    \end{aligned}
\end{equation}
and the total dissipation
\begin{equation}
    \begin{aligned}
        \mathfrak{D}(\tau)=\mathcal{D}_{\rm tem}(\tau)+\mathcal{D}_{\rm int}(\tau)+\mathcal{D}_{\rm spt}(\tau),
    \end{aligned}
\end{equation}
where
\begin{equation}
        \begin{aligned}
        \mathcal{D}_{\rm tem}(\tau)=&\lamt\intol(\ptau\varphi)^2w^3z^4dz+\lamt^{3-3\alpha}\intol\mu[(\Flm)z\ptau\pzphi-\ptauphi\cdot z\pzphi]^2w^{3\alpha}z^2dz\\
        &+\lamt^{3\alpha-2}\left(\lamt\intol(\ptau^2\varphi)^2w^3z^4dz+\lamt^{3-3\alpha}\intol\mu[(\Flm)z\ptau^2\pzphi-\ptau^2\varphi\cdot z\pzphi]^2w^{3\alpha}z^2dz\right),\\
        \mathcal{D}_{\rm int}(\tau)=&\lamt\intol\left(\mu\hat{\chi}(\ptauphi)^2+\mu\hat{\chi}(z\ptau\pzphi)^2\right)w^{3\alpha}dz+\lamt^{3\alpha-1}\intol\left(\mu\hat{\chi}(\ptau^2\varphi)^2+\mu\hat{\chi}(z\ptau^2\pz\varphi)^2\right)w^{3\alpha}dz,\\
        \mathcal{D}_{\rm spt}(\tau)=&\lamt\intol\mu^2\left(\frac{1}{z^4w^3}\pz\left(w^{3\alpha}z^4\ptau\pzphi\right)\right)^2w^{3}z^{2}dz\\
        &+\lamt^{6\alpha-5}\intol\left(\frac{1}{w^{3}z^{4}}\pz\left(w^{3\alpha}z^4\pzphi\right)\right)^2w^{11-6\alpha}z^2 dz.
        \end{aligned}
\end{equation}
The function $\hat{\chi}$ appearing in $\mathcal{E}_{\rm int}(\tau)$ and $\mathcal{D}_{\rm int}(\tau)$ is a smooth cutoff defined as
\begin{equation}\label{Eq: Cutoff}
    \hat{\chi}(z)=\begin{cases}
        1\ \text{if } z\in[0,\frac{1}{2}],\\
        0\ \text{if } z\in[\frac{3}{4},1].
    \end{cases}
\end{equation}
Next, we define the strong solution to the problem (\ref{phiEqn})-(\ref{BC, phi}).
\begin{mydef}
Let $T>0$. A function $\varphi$ is called a strong solution to
(\ref{phiEqn})--(\ref{BC, phi}) on $[0,T]$ if all the derivatives
appearing below are understood in the distributional sense and
\begin{equation}
    \operatorname*{ess\,sup}_{\tau\in[0,T]}
    \mathfrak{E}(\tau)
    +\int_0^T\mathfrak{D}(s)ds<\infty,
\end{equation}
as well as
\begin{equation}
    \varphi,\ \partial_\tau\varphi
    \in C([0,T];L^2([0,1],w^3z^4dz)),
\end{equation}
\begin{equation}
   \pz\varphi,\ \partial_\tau\pz\varphi
    \in C([0,T];L^2([0,1],w^4z^4dz)),
\end{equation}
\begin{equation}
    \varphi,\ z\pz\varphi,\ 
    \ptau\varphi,\  z\ptau\pz\varphi
    \in L^\infty((0,T)\times(0,1)),
\end{equation}
and there exists a constant $c_T>0$ such that
\begin{equation}
    1+\varphi\ge c_T,\qquad
    1+\varphi+z\pz\varphi\ge c_T
\end{equation}
almost everywhere in $(0,T)\times(0,1)$. Moreover,
(\ref{phiEqn}) holds almost everywhere in
$(0,T)\times(0,1)$, and (\ref{BC, phi}) holds in the
sense of traces for almost every $\tau\in(0,T)$. A function $\varphi$ is called a global strong solution if it is a
strong solution on $[0,T]$ for every $T>0$.
\end{mydef}
Now, we are ready to state the main theorem.
\begin{theorem}\label{Thm: stability of GW, LG}Suppose $-\tilde\varepsilon<\delta$, where
$\tilde\varepsilon>0$ is a fixed sufficiently small constant. There exists a constant $\varepsilon_0>0$ such that if the initial data $(\varphi(0),\ptauphi(0))=(\varphi_0,\varphi_1)$ satisfies
\begin{equation}
    \mathfrak{E}(0)\le\varepsilon_0
\end{equation}
and the boundary condition
\begin{equation}
    \left.\frac{4}{3}\mu(w^3J_0^{-1})^\alpha\frac{(1+\varphi_0)z\pz\varphi_1-z\pz\varphi_0\varphi_1}{1+\varphi_0+z\pz\varphi_0}\right|_{z=1}=0,
\end{equation}
then there exists a strong solution $\varphi$ to the problem (\ref{phiEqn})-(\ref{BC, phi}) on $[0,\infty)\times[0,1]$ with the estimate
\begin{equation}\label{Eq: Total energy est}
    \mathfrak{E}(\tau)+\int_0^\tau\mathfrak{D}(s)ds\le C\mathfrak{E}(0)+C\int_0^\tau\lamt^{-\min\{\frac{3}{2}\alpha,\frac{1}{2}\}}\mathfrak{E}(s)ds
\end{equation}
for any $\tau\in[0,\infty)$.
\end{theorem}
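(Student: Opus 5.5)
The plan is the standard bootstrap/continuation argument. First, establish local well-posedness of strong solutions to (\ref{phiEqn})--(\ref{BC, phi}) in the class defined above. For small $\mathfrak{E}(0)$ I would do this by a Galerkin or iteration scheme on the linearized degenerate parabolic-hyperbolic problem $\lamt\ptau^2\varphi+\lamt'\ptau\varphi+\mu\lamt^{3-3\alpha}\mathcal{L}_{w^{3\alpha}}\ptau\varphi=f$. Then, on any interval $[0,T]$ on which the a priori assumption
\[
\sup_{[0,T]}\mathfrak{E}(\tau)\le \varepsilon_1
\]
holds with $\varepsilon_1$ small, I prove the estimate (\ref{Eq: Total energy est}) with $C$ independent of $T$. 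The smallness is needed because weighted Hardy/Sobolev embeddings in the weights $w^3z^4$, $w^4z^2$, and the $\mathcal{E}_{\rm spt}$ quantities give $\|\varphi\|_{L^\infty}+\|z\pz\varphi\|_{L^\infty}+\|\ptau\varphi\|_{L^\infty}+\|z\ptau\pz\varphi\|_{L^\infty}\lesssim\mathfrak{E}^{1/2}$. This keeps $J$ and $1+\varphi$ bounded away from $0$, so all nonlinear coefficients in $\mathcal{N}_1,\mathcal{N}_2$ are controlled. Since $\int_0^\infty\lamt^{-\min\{3\alpha/2,1/2\}}d\tau<\infty$ by the exponential growth of $\lamt$ from (\ref{Eq: behaviour of lamt', lamt''}), Gronwall applied to (\ref{Eq: Total energy est}) yields $\mathfrak{E}(\tau)\le C'\mathfrak{E}(0)\le C'\varepsilon_0<\varepsilon_1$. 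Continuation then gives a global solution.

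The a priori estimate is built in three layers matching $\mathfrak{E}=\mathcal{E}_{\rm tem}+\mathcal{E}_{\rm int}+\mathcal{E}_{\rm spt}$.

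\textbf{Temporal estimates.} Multiply (\ref{phiEqn: linearized, G}) by $\ptau\varphi\, w^3z^4$ and use the physical energy identity (\ref{Eq: physical energy identity}). The linear part gives $\frac{d}{d\tau}$ of $\lamt(\ptau\varphi)^2+\frac43(\pz\varphi)^2w^4z^4+3\delta\varphi^2$ plus damping $\frac12\lamt'(\ptau\varphi)^2$, since $\lamt'\sim\lamt$. Because $\delta>-\tilde\varepsilon$ is small, the $3\delta\varphi^2$ term is absorbed by the weighted Hardy inequality $\int\varphi^2w^3z^4\lesssim\int(\pz\varphi)^2w^4z^4+\ldots$. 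The $\varphi^2$ part of the energy is recovered by also multiplying by $\varphi w^3z^4$, which exploits the viscous and damping terms. Then apply $\ptau$ to the equation and repeat with weight $\lamt^{3\alpha-2}$. Commutators $\lamt'\ptau^2\varphi$ and $\ptau(\lamt^{3-3\alpha})$ are harmless because of the bounds on $\lamt'/\lamt$ and $\lamt''/\lamt'$. The boundary condition (\ref{BC, phi}) kills the boundary terms from the viscous flux, and its $\tau$-derivative does the same at the next order.

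\textbf{Interior estimates.} Multiplication by $\hat\chi$ with weight $z^2$ instead of $z^4$ gives control near the origin. There the factor $1/z^4$ in $\mathcal{L}$ requires care, and the cutoff error terms are supported in $[\frac12,\frac34]$, where $w\sim1$ and they are bounded by the temporal energy.

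\textbf{Elliptic estimates.} The last layer controls $\mathcal{E}_{\rm spt}$ and $\mathcal{D}_{\rm spt}$. I rewrite (\ref{phiEqn: linearized}) as
\[
\mu\lamt^{3-3\alpha}\mathcal{L}_{w^{3\alpha}}\ptau\varphi+\mathcal{L}_{w^4}\varphi=-\lamt\ptau^2\varphi-\lamt'\ptau\varphi-3\delta\varphi-\mathcal{N}_1+\lamt^{3-3\alpha}\mathcal{N}_2.
\]
Set $Q=\frac{1}{w^3z^4}\pz(w^{3\alpha}z^4\pz\varphi)$. Then $\mathcal{L}_{w^4}\varphi=-\frac43(w^{4-3\alpha}Q+\frac{(4-3\alpha)w^{3-3\alpha}w'}{w^3}\,w^{3\alpha}\pz\varphi\cdot\ldots)$, i.e. up to lower-order terms $\mathcal{L}_{w^4}\varphi\approx w^{4-3\alpha}(-\frac43 Q)$. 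The equation therefore becomes a first-order ODE in $\tau$ for $Q$:
\[
\mu\lamt^{3-3\alpha}\ptau Q+ w^{4-3\alpha}Q=\text{RHS}.
\]
Testing with $Qw^{7-3\alpha}z^2\lamt^{3\alpha-2}$ and with $\ptau Q\,w^3z^2$ produces exactly the two $\mathcal{E}_{\rm spt}$ terms and the two $\mathcal{D}_{\rm spt}$ terms, with powers of $\lamt$ matching $\lamt^{6\alpha-5}$ and $\lamt$.

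\textbf{Main obstacle.} I expect the main obstacle to be closing the nonlinear and lower-order terms in the elliptic step and in $\mathcal{N}_2$ against the degenerate weights near $z=1$. In particular the term $w'w^{3\alpha}\pz\varphi$, which has a worse weight, must be bounded by the Hardy inequality in terms of $Q$ and the temporal energy. Its coefficient decays only like a power of $\lamt$, and this is where the restriction $\alpha\le\frac23$ and the loss $\lamt^{-\min\{3\alpha/2,1/2\}}\mathfrak{E}$ in (\ref{Eq: Total energy est}) should arise. I would first try interpolating $\lamt^{3-3\alpha}\mathcal{N}_2$ between $\mathcal{D}_{\rm spt}$ and $\mathcal{E}_{\rm spt}$ using $\|z\pz\varphi\|_\infty\lesssim\mathfrak{E}^{1/2}$. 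The resulting bounds have the form $\mathfrak{E}^{1/2}\mathfrak{D}$, absorbed by smallness, plus $\lamt^{-\kappa}\mathfrak{E}$.
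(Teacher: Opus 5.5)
Your temporal layer has a genuine gap for $\delta\le0$, which is the delicate part of the theorem. You claim that $3\delta\intol\varphi^2w^3z^4dz$ can be absorbed by a weighted Hardy inequality $\intol\varphi^2w^3z^4dz\lesssim\intol(\pzphi)^2w^4z^4dz+\cdots$. This cannot work. Constants lie in the kernel of $\mathcal{L}_{w^4}$: this is the eigenvalue $\mu_0=0$, and it corresponds to replacing $\lambda(t)$ by another solution of $\lambda^2\lambda''=\delta$. Hence $\frac43\intol(\pzphi)^2w^4z^4dz+3\delta\intol\varphi^2w^3z^4dz$ is negative on $\varphi\equiv c$ for every $\delta<0$. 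For $\delta=0$ it gives no control of $\intol\varphi^2w^3z^4dz$, which is part of $\mathcal{E}_{\rm tem}$. Any valid Hardy inequality carries a zeroth-order term on the right that does not vanish on constants, so smallness of $|\delta|$ buys nothing.

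Testing with $\varphi w^3z^4$ does not repair this. It produces $\frac{d}{d\tau}\intol\lamt\varphi\ptauphi w^3z^4dz-\intol\lamt(\ptauphi)^2w^3z^4dz$, the same indefinite quadratic form, and a viscous remainder with the growing weight $\lamt^{2-3\alpha}\lamt'$. So it is blind to the constant mode.

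What you are missing is a separate estimate for the mean $\intol\varphi w^3z^4dz$. The paper applies the spectral gap (\ref{Eq: spectral gap}) to $\ptau^j\varphi$ minus its average, and controls the average through the physical energy:
\begin{itemize}
\item The linear part of $E(1+\varphi,\ptauphi)-E(1,0)$ is $(\tilde e-\frac{3\delta}{\lamt})\intol\varphi w^3z^4dz+\frac{\lamt'}{\lamt}\intol\ptauphi w^3z^4dz$, with $\tilde e>0$.
\item By (\ref{Eq: physical energy identity}), this difference equals the initial one minus the accumulated dissipation.
\item That dissipation is split as $\sigma\int_0^\tau\mathcal{D}_{\rm tem}+\frac{C}{\sigma}\int_0^\tau\lamt^{-3\alpha}\mathcal{D}_{\rm spt}$.
\item The term $\frac{\lamt'}{\lamt}\intol\ptauphi w^3z^4dz$ is handled by a weighted interpolation in time.
\end{itemize}
This step, not the elliptic step as you guessed, is where the $\lamt^{-\frac32\alpha}$ loss and the need for $\alpha>0$ come from.

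A cruder patch, in the spirit of \cite{Liu2019Isentropic}, also works. Write $\varphi(\tau)=\varphi_0+\int_0^\tau\ps\varphi\,ds$, use Cauchy--Schwarz with $\int_0^\infty\lamt^{-1}d\tau<\infty$, and absorb $3|\delta|\|\varphi(\tau)\|_{\mathbf{L}^2}^2$ into the damping dissipation $\frac12\int_0^\tau\lamt'\|\ps\varphi\|_{\mathbf{L}^2}^2ds$. The drawback is that the admissible range of $\delta$ then depends on $\lamt(0)$ and $b_0$.

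There is a second, smaller gap in your bootstrap. The bound $\|\ptauphi\|_{L^\infty}+\|z\ptau\pzphi\|_{L^\infty}\lesssim\mathfrak{E}^{1/2}$ does not follow from embeddings. $\mathfrak{E}$ contains $\ptauphi$ only with one spatial derivative and the degenerate weights $w^4z^2$ or $\hat\chi w^4$, and these do not control $L^\infty$ near $z=1$. In the paper, the weighted bounds $\lamt\|z\ptau\pzphi\|_{L^\infty}^2\lesssim\mathcal{D}_{\rm spt}(\tau)$ (and the analogue for $\ptauphi$) come from Lemma \ref{Lemma: Linfty control}. They are used, for instance, in bounding $\lamt^{3-3\alpha}\mathcal{N}_2$. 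The key input is that $\mathcal{D}_{\rm spt}(\tau)$ is bounded \emph{pointwise in time} by $\mathcal{E}_{\rm int}+\mathcal{E}_{\rm tem}+\lamt^{-1}\mathcal{E}_{\rm spt}$ (Lemma \ref{lemma: spatial derivatives 2}). This is obtained by solving (\ref{Eq: rewritten eq 3}) for $\mu\lamt^{\frac52-3\alpha}\mathcal{L}_{w^{3\alpha}}\ptauphi$ at each fixed $\tau$.

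Your identity for $Q$ yields only the time-integrated quantity $\int_0^\tau\lamt\intol(\ps Q)^2w^3z^2dz\,ds$, so you need this extra pointwise step. The same step is what turns the term $\int_0^\tau\lamt^{-3\alpha}\mathcal{D}_{\rm spt}$, produced by the mean control, into part of $\int_0^\tau\lamt^{-\min\{\frac32\alpha,\frac12\}}\mathfrak{E}$.

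Finally, $\alpha\le\frac23$ enters through the weight matching $6\alpha-3\le3\alpha-1$. This is needed so that $\lamt^{6\alpha-3}\intol(\ptau^2\varphi)^2w^3z^2dz$ is bounded by $\mathcal{D}_{\rm tem}+\mathcal{D}_{\rm int}$ after time integration, and by $\mathcal{E}_{\rm tem}+\mathcal{E}_{\rm int}$ pointwise.
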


We illustrate the main ideas of the proof. The first step is to prove the high-order weighted temporal estimates. For $j=0,1$, we take $\ptau^j$ of (\ref{phiEqn: linearized, G}) and test the resulting equation with $\lamt^{j(3\alpha-2)}w^3\ptau^{j+1}\varphi$ to prove the bound of
\begin{equation}\label{Eq: Time est, simple}
    \begin{aligned}
        &\sum_{j=0}^1\lamt^{j(3\alpha-2)}\left(\intol\lamt(\ptau^{j+1}\varphi)^2w^3z^4+\frac{4}{3}(z\ptau^j\pzphi)^2w^4z^2+3\delta(\ptau^{j}\varphi)^2w^3z^4dz\right)\\
        &+\sum_{j=0}^1\int_0^\tau\lamt^{j(3\alpha-2)}\left(\intol\lamt(\ps^{j+1}\varphi)^2w^3z^4+\lamt^{3-3\alpha}\mu[(\Flm)z\ps^{j+1}\pzphi-\ps^{j+1}\varphi\cdot z\pzphi]^2w^{3\alpha}z^2dz\right)ds.
    \end{aligned}
\end{equation}
For $\delta>0$, (\ref{Eq: Time est, simple}) is automatically positive definite, thus the estimates of $\mathcal{E}_{\rm tem}(\tau)$ and $\int_0^\tau\mathcal{D}_{\rm tem}(s)ds$ are implied. While for $\delta\le0$, (\ref{Eq: Time est, simple}) may not be positive definite. Fortunately, the spectrum property of the operator $\mathcal{L}_{w^4}$ allows us to prove coercivity of the temporal energy functional for $\delta>-\tilde{\varepsilon}$ with a sufficiently small $\tilde{\varepsilon}>0$, which leads to a control of $\mathcal{E}_{\rm tem}(\tau)$ and $ \int_0^\tau\mathcal{D}_{\rm tem}(s)ds$ by $\mathfrak{E}(0)+\int_0^\tau\lamt^{-\min\{\frac{3}{2}\alpha,\frac{1}{2}\}}\mathfrak{E}(s)ds+\int_0^\tau\lamt^{-3\alpha}\mathcal{D}_{\rm spt}(s)ds$. The second step is to prove the interior estimates. By introducing the smooth cutoff function $\hat{\chi}$ which vanishes in a neighborhood of the free boundary, we obtain the following better control of the temporal derivatives near the origin
\begin{equation}\label{Eq: interior Time est, simple}
    \begin{aligned}
        &\sum_{j=0}^1\lamt^{(j+1)(3\alpha-2)}\left(\intol\lamt\hat{\chi}(\ptau^{j+1}\varphi)^2w^3z^2+\hat{\chi}(z\ptau^j\pzphi)^2w^4+\hat{\chi}(\ptau^j\varphi)^2w^3z^2dz\right)\\
        &+\sum_{j=0}^1\int_0^\tau\lamt^{j(3\alpha-2)}\left(\lamt\intol\left(\mu\hat{\chi}(\ps^{j+1}\varphi)^2+\mu\hat{\chi}(z\ps^{j+1}\pzphi)^2\right)w^{3\alpha}dz\right)ds
    \end{aligned}
\end{equation}
by $\mathfrak{E}(0)+\int_0^\tau\lamt^{-\min\{\frac{3}{2}\alpha,\frac{1}{2}\}}\mathfrak{E}(s)ds+\int_0^\tau\lamt^{-3\alpha}\mathcal{D}_{\rm spt}(s)ds$. We point out that in the temporal estimate and the interior estimate, the viscosity operator $V_{w,\alpha}[\varphi]$ must be treated as a whole. After integration by parts, the highest order term, which also carries the highest temporal weight, can be written as a squared term with a good sign, leaving no uncontrolled remainder terms. The third step is the elliptic estimates. The goal is to recover regularities of spatial derivatives. We recall the definition of the operator $\mathcal{L}_{w^4}$ in (\ref{Important linear operator}) and rewrite the operator as follows
\begin{equation*}
    \mathcal{L}_{w^4}\varphi=w^{4-3\alpha}\mathcal{L}_{w^{3\alpha}}\varphi-\frac{4}{3}(4-3\alpha)w'\pzphi.
\end{equation*}
To estimate the highest order term, we make use of the following key structure of (\ref{phiEqn: linearized})
\begin{equation}\label{Eq: key structure of the equation}
    \begin{aligned}
        -\mu\lamt^{\frac{5}{2}-3\alpha}\mathcal{L}_{w^{3\alpha}}\ptauphi-\lamt^{-\frac{1}{2}}w^{4-3\alpha}\mathcal{L}_{w^{3\alpha}}\varphi=\lamt^{\frac{1}{2}}\ptau^2\varphi+\text{lot order or nonlinear terms}.
    \end{aligned}
\end{equation}
To be specific, taking the square of the equation and integrating it after multiplying the weight $\lamt^{6\alpha-4}w^3z^2$, we get
\begin{equation}\label{Eq: Elliptic est, simplified}
\begin{aligned}
    &\intol\mu\lamt^{3\alpha-2}(\mathcal{L}_{w^{3\alpha}}\varphi)^2w^{7-3\alpha}z^2dz+\intol\mu^2(\mathcal{L}_{w^{3\alpha}}\varphi)^2w^3z^2dz\\
    &+\int_0^\tau\left(\intol\mu^2\lamt(\mathcal{L}_{w^{3\alpha}}\ps\varphi)^2w^3z^2+\lamt^{6\alpha-5}(w^{4-3\alpha}\mathcal{L}_{w^{3\alpha}}\varphi)^2w^3z^2dz\right)ds\\
    \lesssim&\int_0^\tau\lamt^{6\alpha-3}(s)\left(\intol(\ps^2\varphi)^2w^3z^2dz\right)ds+l.o.t..
\end{aligned}
\end{equation}
Combined with Hardy's inequality, we are able to control $\mathcal{E}_{\rm spt}(\tau)$ and $\int_0^\tau\mathcal{D}_{\rm spt}(s)ds$ by $\mathfrak{E}(0)+\int_0^\tau\lamt^{-\min\{\frac{3}{2}\alpha,\frac{1}{2}\}}\mathfrak{E}(s)ds+\int_0^\tau\lamt^{-3\alpha}\mathcal{D}_{\rm spt}(s)ds$. Note that the term $\int_0^\tau\lamt^{-3\alpha}\mathcal{D}_{\rm spt}(s)ds$ is not yet controlled, we then further rewrite (\ref{Eq: key structure of the equation}) into the form
\begin{equation}\label{Eq: key structure of the equation 2}
    -\mu\lamt^{\frac{5}{2}-3\alpha}\mathcal{L}_{w^{3\alpha}}\ptauphi=\lamt^{-\frac{1}{2}}w^{4-3\alpha}\mathcal{L}_{w^{3\alpha}}\varphi+\lamt^{\frac{1}{2}}\ptau^2\varphi+\text{lot order or nonlinear terms}.
\end{equation}
Making use of the estimates (\ref{Eq: Time est, simple}), (\ref{Eq: interior Time est, simple}) and (\ref{Eq: Elliptic est, simplified}), we obtain the boundedness of
\begin{equation}
    \lamt\intol\mu^2(\mathcal{L}_{w^{3\alpha}}\ptauphi)^2w^3z^2dz,
\end{equation}
namely, $\mathcal{D}_{\rm spt}(\tau)\lesssim\mathfrak{E}(\tau)$. Collecting all the estimates, we prove (\ref{Eq: Total energy est}). The control of the total energy, together with Sobolev embedding as well as Hardy's inequality, implies the following $L^\infty$ bounds on $[0,1]$
\begin{equation}\label{Eq: pointwise bound near origin}
    \|\varphi\|_{L^\infty},\ \lamt^{\frac{1}{2}}(\tau)\|\ptau\varphi\|_{L^\infty},\ \|z\pz\varphi\|_{L^\infty},\ \lamt^{\frac{1}{2}}(\tau)\|z\ptau\pz\varphi\|_{L^\infty},
\end{equation}
which are key quantities to close the nonlinear estimates.

Before proceeding to the proof of the main result, we present several remarks on some related recent results.
\begin{rmk}[Choice of viscosity coefficient]\label{Rmk: Choice of viscosity coefficient}In our problem, we only deal with the special viscosity coefficient $\mu_1(\rho)=\mu\rho^{\alpha}$ with $0<\alpha\le\frac{2}{3}$. On the one hand, the proof of high order spatial regularities rely on the elliptic estimate (\ref{Eq: Elliptic est, simplified}), where we use $\int_0^\tau\mathcal{D}_{\rm tem}(s)+\mathcal{D}_{\rm int}(s)ds$ to control the right-hand side of (\ref{Eq: Elliptic est, simplified}). We thus require $6\alpha-3\le3\alpha-1$, i.e., $\alpha\le\frac{2}{3}$. On the other hand, our method for controlling the negetive part of the temporal energy applies only when $\alpha>0$. Indeed, it produces the term $\int_0^\tau\lamt^{-\min\{\frac{3}{2}\alpha,\frac{1}{2}\}}\mathfrak{E}(s)ds$ on the right-hand side of the differential inequality for the total energy (\ref{Eq: Total energy est}). When $\alpha=0$, this term no longer carries any exponential decay in the integrand, thus cannot be controlled by our argument. We remark that the lower bound of $\delta$ following from our argument is uniform in $\lamt_0$, $\lamt_1$ and $\alpha$. Meanwhile, in the work \cite{Liu2019Isentropic} where constant viscosity case is considered, the author applied H\"older inequality directly and made use of the dissipation coming from the velocity term to control the negative part of the temporal energy, which gave the explicit lower bound $\delta>-\frac{\lamt_0\lamt_1^2}{8}$. 
\end{rmk}
\begin{rmk}[Comparison with the inviscid problem, linearly expanding case]\label{Rmk: 1}
    In the work \cite{HJ}, one of the results is the stability of linearly expanding solutions (case 1, case 2 with $\lambda_1>0$ and case 3 (b) in Proposition \ref{Proposition: General LE, lambda}) with $-\tilde{\varepsilon}<\delta$ for a small enough $\tilde{\varepsilon}>0$ for the Euler-Poisson system. In the negative $\delta$ regime, the authors made use of the spectral gap property of the associated operator $\mathcal{L}_{w^4}$ to obtain coercivity of the total energy. However, this approach leaves a weighted average term to be controlled separately, and the control of this term relies on estimates for the perturbation of the physical energy
    \begin{equation}\label{Eq: perturbation of physical energy, LE}
        |E(1,0)-E(1+\varphi,\ptau\varphi)|,
    \end{equation}
    and the energy conservation in the Euler-Poisson case turns it into the control of 
    \begin{equation}
        |E(1,0)-E(1+\varphi_0,\varphi_1)|,
    \end{equation}
    which is directly controlled by the initial total energy. While for the Navier-Stokes-Poisson system, there is a dissipation
    \begin{equation*}
        E(1+\varphi_0,\varphi_1)-E(1+\varphi,\ptau\varphi)=\int_0^\tau D(\varphi,\ps\varphi)ds
    \end{equation*}
    in the physical energy-dissipation identity. With the choice of $0<\alpha\le\frac{2}{3}$ in this paper, we are able to prove a strong decay of the higher order spatial derivative, which provides a control of the dissipation in the physical energy-dissipation identity, thus we obtain the coercivity of $\tilde{\mathcal{E}}_{\rm tem}(\tau)$ defined in (\ref{energy functional tilde E}).
\end{rmk}
\begin{rmk}[Discussion on self-similar expanding solutions]
    For the self-similar expanding solution (case 3 (a) in Proposition \ref{Proposition: General LE, lambda}), $\delta$ is negative. To reformulate the system (\ref{chiEqn}), recalling the expression of $\lambda(t)$ in (\ref{Eq: self-similar lambda}), letting $\tau(t)=\int_0^t\frac{1}{\lambda^{\frac{3}{2}}(\sigma)}d\sigma$ and denoting $b:=\frac{\ptau\lamb}{\lamb}=\sqrt{|2\delta|}$, we have $\lamb(\tau)=e^{b\tau}$. Let $\xi(\tau,z)=\frac{\chi(\tau,z)}{\lamb(\tau)}$ and $\psi(\tau,z):=\xi(t(\tau),z)-1$, making use of (\ref{General LE}) and the fact that $\lambda^2\pt^2\lambda=\delta$, we arrive at
\begin{equation}\label{Eq: self-similar expanding solution, reformulated}
            \begin{aligned}
                \ptau^2\psi+\frac{1}{2}b\ptau\psi+\delta(1+\psi)+F_w[1+\psi]=\lamb^{\frac{5}{2}-3\alpha} V_{w,\alpha}[\psi].
            \end{aligned}
    \end{equation}
The physical energy-dissipation identity can be written as
\begin{equation}
    \bar{E}(1+\psi,\ptau\psi)+\int_0^\tau\bar{D}(\psi,\ptau\psi)ds=\bar{E}(1+\psi_0,\psi_1),
\end{equation}
where
\begin{equation}\label{Eq: physical energy, self-similar expanding solution}
\begin{aligned}
    \bar{E}(1+\psi,\ptau\psi)=&\intol\frac{1}{\lamb}\frac{1}{2}\left(b(\flm)+\ptau\psi\right)^2w^3z^4dz\\
    &+\intol\frac{1}{\lamb}\left(3w^4z^2J^{-\frac{1}{3}}+\frac{4w'}{z}\frac{w^3z^4}{\flm}\right)dz+\intol\frac{1}{\lamb}\frac{\delta w^3z^4}{\flm}dz
\end{aligned}
\end{equation}
and
\begin{equation}
    \begin{aligned}
        \bar{D}(\psi,\ptau\psi)=\intol\frac{4}{3}\mu\lamb^{\frac{3}{2}-3\alpha}z^2(w^3J^{-1})^{\alpha}\frac{\left[(1+\psi)z\ptau\pz\psi-\ptau\psi\cdot z\pz\psi\right]^2}{1+\psi+z\pz\psi}dz.
    \end{aligned}
\end{equation}
    For the Euler-Poisson case, that is, replacing the right-hand side of (\ref{Eq: self-similar expanding solution, reformulated}) by 0, because of energy conservation, the perturbed solution stays in the level set
    \begin{equation}\label{Eq: zero energy condition, Self-similar}
        \bar{E}(1+\psi,\psi)=E(1,0)=0
    \end{equation}
    if we assume $\bar{E}(1+\psi_0,\psi_1)=\bar{E}(1,0)=0$ initially. In the work \cite{HJ}, the authors proved the stability of self-similar expanding solution for $-\tilde{\varepsilon}<\delta<0$ with $\tilde{\varepsilon}>0$ small enough, where perturbation is assumed to have zero physical energy. While in the Navier-Stokes-Poisson case, due to the dissipation, we don't expect the same phenomenon, which makes the approach in \cite{HJ} not directly applicable. To be specific, the velocity term in (\ref{Eq: physical energy, self-similar expanding solution}) may exhibit exponential growth in time. Under the assumption of zero initial physical energy, adapting the coercivity argument from the Euler-Poisson case requires us to control
    \begin{equation}
        \lamb(\tau)|\bar{E}(1+\psi,\ptau\psi)-\bar{E}(1,0)|=\lamb(\tau)\int_0^\tau\bar{D}(\psi,\ps\psi)ds.
    \end{equation}
    However, such a control is not immediate due to the growing factor $\lamb(\tau)$. In \cite{Liu2019Isentropic}, for the constant-viscosity problem, the author showed that, for perturbations that are nonconstant in the spatial variable, the background solution is unstable if $\bar E(1+\psi_0,\psi_1)\le0$, in the sense that, for any strong solution $\psi$, the quantity $\|\psi\|_{L^\infty([0,1])}+\|z\pz\psi\|_{L^\infty([0,1])}$ cannot remain small for arbitrarily long times. We point out that the same instability conclusion remains valid in the density-dependent degenerate viscosity setting, since the argument in \cite[Subsection~3.2]{Liu2019Isentropic} relies only on the physical energy-dissipation identity and the non-negativity of the dissipation.
\end{rmk}
\begin{rmk}[Comparison with other works on the viscous problem]
    In the works \cite{LXZ1,LXZ2} where the authors proved the stability of Lane-Emden equation for $\gamma\in(\frac{4}{3},2)$, they mainly relied on the range of $\gamma$ to prove coercivity. While in our problem, the expansion of background solution plays an important role. One aspect has been pointed out in Remark \ref{Rmk: 1}, where we discuss making use of the temporal weight to gain coercivity for a small range of negative $\delta$. Another aspect is the coercivity of the higher-order temporal dissipation $\mathcal{D}_{\rm tem}(\tau)$. The restriction of zero bulk viscosity prevents us from gaining coercivity merely from the viscosity term $\lamt^{3-3\alpha} V_{w,\alpha}[\varphi]$, which is highly nonlinear. But the coercivity of $\mathcal{D}_{\rm tem}(\tau)$ can be gained from the combination of terms
    $$-\lamt'\ptauphi+\lamt^{3-3\alpha} V_{w,\alpha}[\varphi].$$
    However, due to the mismatch of temporal weights in these two terms, except for the case $\alpha=\frac{2}{3}$, we only recover
    $$\int_0^\tau\lamt(s)\intol\left((\ps\varphi)^2+(z\ps\pz\varphi)^2\right)w^{3\alpha}z^2dzds\lesssim\int_0^\tau\mathcal{D}_{\rm tem}(s)ds$$
    from the low order temporal estimate. As a result, when it comes to the high order temporal estimate, there has to be a loss of temporal weight by $\lamt^{3\alpha-2}(\tau)$ if $0<\alpha<\frac{2}{3}$. For the temporal weights in the interior energy estimate and the spatial energy estimate, they are determined by two related considerations: the temporal weights inherited from the preceding estimates, and the mismatch between the temporal weights of the velocity and viscosity terms in the equation.

    As for the functional framework, our approach is inspired by \cite{LXZ1,LXZ2,LXin,Liu2019Isentropic}, especially the work \cite{LXin,Liu2019Isentropic}. We follow a similar hierarchy of $L^\infty$ in time estimates of the energy and dissipation functionals to \cite{LXin,Liu2019Isentropic}, namely,
    \begin{equation*}
        \mathcal{E}_{\rm tem}(\tau)\rightarrow\mathcal{E}_{\rm int}(\tau)\rightarrow\mathcal{E}_{\rm spt}(\tau)\rightarrow\mathcal{D}_{\rm spt}(\tau),
    \end{equation*}
    although in \cite{LXin} the author was considering a different system called the gravitational Navier-Stokes-Fourier-Poisson system. Meanwhile, in \cite{LXZ2}, the authors employed the key multipliers $r^3-x^3$ together with several variants such as $\int_0^x\bar{\rho}^{-\beta}(r^3-x^3)_ydy$, in their notations. By exploiting the structure of the viscosity term, they were able to extract a perfect time derivative of a coercive quantity, thereby obtaining improved estimates in terms of spatial weight near the free boundary. They then rewrote the system as a transport equation and derived $L^\infty$ estimates of $r_x-1,\ v\text{ and}\ v_{x}$ near the free boundary. We point out that an analogous approach to our problem is not applicable because of the growing temporal weight in the viscosity term of (\ref{phiEqn: linearized, G}). Alternatively, to derive the $L^\infty$ bounds for the quantities $\varphi,\ z\pzphi,\ \ptauphi
    \text{ and }z\ptauzphi$, we linearize the viscosity operator and rewrite the system as (\ref{phiEqn: linearized, G}). By exploiting the spatial weight structure of the linear operator $\lamt^{3-3\alpha}\mathcal{L}_{w^{3\alpha}}\ptauphi$ in the elliptic estimate and applying both Hardy's inequalities (\ref{hardy 1}) and (\ref{hardy 2}) under different ranges of $k$, we obtain the desired $L^\infty$ bounds.
\end{rmk}
For the sake of clarity, we do not explicitly track the dependence of the coefficient $\mu$ in the estimates. We emphasize, however, that the estimates in this paper are not uniform in $\mu$. Therefore, our framework does not allow us to study the inviscid limit. 

In the rest of this article, we will frequently use the following Hardy's inequality. See, for example, \cite{KMP} for a proof.

\begin{lem}[Hardy's inequality]
    Let $k$ be a real number and let $g$ be a function satisfying $\intol d^k(g^2+g'^2)ds<\infty$. Then
\begin{enumerate}
    \item If $k>1$, then there holds 
    \begin{equation}\label{hardy 1}
        \intol d^{k-2}g^2ds\le C\intol d^k(g^2+g'^2)ds,
    \end{equation}
    \item If $k<1$, then $g$ has a trace at $s=0$ and $1$, and there holds
    \begin{equation}\label{hardy 2}
        \int_0^{\frac{1}{2}} d^{k-2}(g-g(0))^2ds+\int_{\frac{1}{2}}^1d^{k-2}(g-g(1))^2ds\le C\intol d^kg'^2ds.
    \end{equation}
\end{enumerate}
\end{lem}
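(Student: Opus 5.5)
The proof splits $[0,1]$ at $s=\frac12$, so that $d(s)=s$ on $[0,\frac12]$ and $d(s)=1-s$ on $[\frac12,1]$. By the reflection $s\mapsto 1-s$ it suffices to treat the half-interval $[0,\frac12]$. On each half I will derive a one-dimensional weighted integration-by-parts identity for $\int s^{k-2}h^2\,ds$, then absorb the cross term by Cauchy--Schwarz. To avoid assuming a priori that the left-hand side is finite, I work on $[\varepsilon,\frac12]$ and let $\varepsilon\to0$ at the end.

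\emph{Case $k>1$.} Take $h=g$. For $\varepsilon>0$, integrating $s^{k-2}=\frac{1}{k-1}(s^{k-1})'$ by parts gives
\begin{equation*}
\int_\varepsilon^{1/2}s^{k-2}g^2ds=\frac{1}{k-1}\Big[s^{k-1}g^2\Big]_\varepsilon^{1/2}-\frac{2}{k-1}\int_\varepsilon^{1/2}s^{k-1}gg'ds.
\end{equation*}
The boundary contribution at $\varepsilon$ equals $-\frac{1}{k-1}\varepsilon^{k-1}g(\varepsilon)^2\le0$, so it can be dropped.

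The value $g(\frac12)^2$ is bounded by $C\int_{1/4}^{3/4}(g^2+g'^2)ds$, using the one-dimensional Sobolev embedding on an interval where $d$ is bounded above and below. That integral is in turn bounded by $C\int_0^1 d^k(g^2+g'^2)ds$.

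For the cross term, write $s^{k-1}|gg'|=s^{\frac{k-2}{2}}|g|\cdot s^{\frac k2}|g'|$ and apply Cauchy--Schwarz. The resulting factor $\big(\int_\varepsilon^{1/2}s^{k-2}g^2\big)^{1/2}$ is finite for fixed $\varepsilon>0$, so Young's inequality absorbs it into the left-hand side. This gives a bound on $\int_\varepsilon^{1/2}s^{k-2}g^2\,ds$ that is uniform in $\varepsilon$, and monotone convergence yields (\ref{hardy 1}).

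\emph{Case $k<1$.} First I establish the trace. By Cauchy--Schwarz,
\begin{equation*}
\int_0^{1/2}|g'|\,ds\le\Big(\int_0^{1/2}s^kg'^2ds\Big)^{1/2}\Big(\int_0^{1/2}s^{-k}ds\Big)^{1/2}<\infty,
\end{equation*}
since $-k>-1$. Hence $g$ is absolutely continuous up to $s=0$, so $g(0)$ exists, and likewise $g(1)$ exists by reflection.

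Now set $h=g-g(0)=\int_0^s g'$, and apply Cauchy--Schwarz once more on $[0,\varepsilon]$:
\begin{equation*}
h(\varepsilon)^2\le\varepsilon^{1-k}\int_0^\varepsilon s^kg'^2ds,\qquad\text{so}\qquad \varepsilon^{k-1}h(\varepsilon)^2\to0.
\end{equation*}
The same integration by parts as before gives
\begin{equation*}
\int_\varepsilon^{1/2}s^{k-2}h^2ds=\frac{1}{1-k}\varepsilon^{k-1}h(\varepsilon)^2-\frac{1}{1-k}2^{1-k}h(\tfrac12)^2+\frac{2}{1-k}\int_\varepsilon^{1/2}s^{k-1}hh'ds.
\end{equation*}
The second term on the right is nonpositive and can be dropped, and the first vanishes as $\varepsilon\to0$. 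Absorbing the last term exactly as in the case $k>1$ gives
\begin{equation*}
\int_0^{1/2}s^{k-2}(g-g(0))^2ds\le\frac{4}{(1-k)^2}\int_0^{1/2}s^kg'^2ds.
\end{equation*}
Note that no $g^2$ term is needed on the right-hand side here, which is why (\ref{hardy 2}) has only $g'^2$.

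The main obstacle is the justification at the degenerate endpoint: controlling the boundary term at $\varepsilon$, and knowing that the quantity being absorbed is finite. The truncation handles both. When $k>1$ the boundary term at $\varepsilon$ has a good sign. When $k<1$ it tends to zero by the explicit trace estimate above. Everything else is routine.
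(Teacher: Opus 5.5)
Your proof is correct. The paper does not prove this lemma at all; it only refers to \cite{KMP}, so your argument is a self-contained replacement for that citation rather than a variant of an argument in the paper.

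What you give is the standard direct proof of the one-dimensional weighted Hardy inequality. You integrate by parts against $(s^{k-1})'$ on $[\varepsilon,\frac12]$, bound the cross term by Cauchy--Schwarz, absorb the finite quantity $\int_\varepsilon^{1/2}s^{k-2}h^2\,ds$, and let $\varepsilon\to0$.

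The sign bookkeeping is right in both regimes. For $k>1$ the endpoint term at $\varepsilon$ is nonpositive. The term at $\frac12$ is controlled by the $g^2$ part of the hypothesis through Sobolev on $[\frac14,\frac34]$; constants show that this $g^2$ term in (\ref{hardy 1}) cannot be dropped. For $k<1$ the roles flip: the term at $\frac12$ is nonpositive, and the one at $\varepsilon$ vanishes by your trace estimate.

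One small slip: since $\int_0^\varepsilon s^{-k}\,ds=\frac{\varepsilon^{1-k}}{1-k}$, your bound should read $h(\varepsilon)^2\le\frac{\varepsilon^{1-k}}{1-k}\int_0^\varepsilon s^kg'^2\,ds$. This changes nothing in the conclusion $\varepsilon^{k-1}h(\varepsilon)^2\to0$.

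Compared with relying on the general weighted theory in \cite{KMP}, your route gives the explicit constant $\frac{4}{(1-k)^2}$ in (\ref{hardy 2}). It also makes explicit that the trace $g(0)$ is the pointwise limit of an absolutely continuous function. That is the form the paper relies on when it applies (\ref{hardy 2}) with vanishing traces, e.g.\ to $w^{3\alpha}z^4\partial_z\varphi$.
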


\subsection{A priori assumptions}
We make the following a priori assumptions for all $\tau\in[0,\infty)$:
\begin{equation}\label{a priori assumptions}
    \begin{aligned}
    &\mathfrak{E}(\tau)\le\omega_0^2,\\
\end{aligned}
\end{equation}
as well as
\begin{equation}\label{a priori assumptions 2}
    |\varphi|,\ |z\pz\varphi|,\ |\ptau\varphi|,\ |z\ptau\pz\varphi|\le\omega_0,
\end{equation}
for small enough $\omega_0>0$. And we let
\begin{equation}
    \omega_1=\sup_{\tau}\{|\varphi|,\ |z\pz\varphi|,\ |\ptau\varphi|,\ |z\ptau\pz\varphi|\}.
\end{equation}

\section{Temporal derivatives estimates}
In this section, we aim to establish estimates for temporal derivatives of $\varphi$. The functional
\begin{equation}\label{energy functional tilde E}
    \begin{aligned}
        \tilde{\mathcal{E}}_{\rm tem}(\tau)=&\frac{1}{2}\sum_{j=0}^1\lamt^{j(3\alpha-2)}\left(\intol\lamt(\ptau^{j+1}\varphi)^2w^3z^4+\frac{4}{3}(z\ptau^j\pzphi)^2w^4z^2+3\delta(\ptau^{j}\varphi)^2w^3z^4dz\right)
    \end{aligned}
\end{equation}
arises naturally while we apply $\ptau$ to the equation and take inner product with the corresponding temporal derivative of $\varphi$. But notice that when $\delta<0$, $\tilde{\mathcal{E}}_{\rm tem}(\tau)$ may not be positive definite. To obtain coercivity, we make use of the spectrum of the operator $\mathcal{L}_{w^4}$ as in \cite[Lemma 4.2]{HJ} to prove the coercivity of $\tilde{\mathcal{E}}_{\rm tem}(\tau)$ for $-\tilde{\varepsilon}<\delta\le0$ with $\tilde{\varepsilon}>0$ small enough. To be precise, the operator $\mathcal{L}_{w^4}$ admits the expression (\ref{Important linear operator}) for $\varphi\in C_c^{\infty}([0,1])$, and it has the following Friedrichs extension
\begin{equation}
    \begin{aligned}
        \mathcal{L}_{w^4}:\mathbf{H}_w^2\rightarrow \mathbf{L}^2,
    \end{aligned}
\end{equation}
where
\begin{equation}
    \begin{aligned}
        &\mathbf{L}^2:=L^2([0,1],w^3z^4dz),\\
        &\mathbf{H}_{w}^j:=\{\varphi\in\mathbf{L}^2:\pz^k\varphi\in L^2([0,1],w^{3+k}z^4dz)\text{ for }0\le k\le j\}\ (j\ge0)
    \end{aligned}
\end{equation}
In \cite[Appendix A]{HJ}, it is proved that $\mathcal{L}_{w^4}$ is self-adjoint under the inner product of $\mathbf{L}^2$ and the spectrum of $\mathcal{L}_{w^4}$ is a discrete sequence $\{\mu_k\}$ with $0=\mu_0<\mu_1<\mu_2<...\rightarrow\infty$. In addition, the eigenfunctions of $\mu_0=0$ are constant functions. As a result, there exists a small enough $\tilde{\varepsilon}>0$ such that for $-\tilde{\varepsilon}<\delta$ and for $\varphi\in\mathbf{H}_w^1$ with zero mean $\intol\varphi w^3z^4dz=0$, 
\begin{equation}\label{Eq: spectral gap}
\begin{aligned}
    &\frac43\int_0^1w^4z^4|\phi_z|^2dz
+3\delta\int_0^1w^3z^4|\phi|^2dz\\
\ge& c_0\left(\|w^{\frac{3}{2}}z^2\varphi\|_{L^2([0,1])}^2+\|w^2z^2\pz\varphi\|_{L^2([0,1])}^2\right)
\end{aligned}
\end{equation}
holds for a fixed positive constant $c_0>0$ independent of $\delta$.

We begin with following lemma showing coercivity for the temporal dissipation.
\begin{lem}\label{Lemma: Coercivity of temporal dissipation} Let $-\tilde\varepsilon<\delta$ with $\tilde\varepsilon>0$ small enough. Suppose (\ref{a priori assumptions}) holds for small constant $\omega_0$. Then
    \begin{equation}\label{Est1-improved 1}
\begin{aligned}
     \int_0^\tau\lamt(s)\intol\left((\ps\varphi)^2+(z\ps\pz\varphi)^2\right)w^{3\alpha}z^2dzds\lesssim\int_0^\tau\mathcal{D}_{\rm tem}(s)ds,\\
      \int_0^\tau\lamt^{3\alpha-1}(s)\intol\left((\ps^2\varphi)^2+(z\ps^2\pzphi)^2\right)w^{3\alpha}z^2dzds\lesssim\int_0^\tau\mathcal{D}_{\rm tem}(s)ds.
\end{aligned}
\end{equation}
\end{lem}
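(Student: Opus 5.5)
The plan is to prove both inequalities pointwise in $s$, by combining a pointwise algebraic bound, a weighted Poincaré--Hardy inequality near the origin, and the comparison of temporal weights allowed by $\alpha\le\frac23$. The a priori assumption (\ref{a priori assumptions 2}) is what makes the algebra work: since $|\varphi|,|z\pz\varphi|\le\omega_0$, we have $1+\varphi\ge\frac12$. Hence, for $j=1,2$,
\begin{equation*}
(z\ps^j\pz\varphi)^2\le 8\big[(\Flm)z\ps^j\pz\varphi-\ps^j\varphi\cdot z\pz\varphi\big]^2+8\omega_0^2(\ps^j\varphi)^2 .
\end{equation*}
So $\lamt\intol(z\ps^j\pz\varphi)^2w^{3\alpha}z^2dz$ is bounded by the viscous part of $\mathcal{D}_{\rm tem}$ plus $C\omega_0^2\lamt\intol(\ps^j\varphi)^2w^{3\alpha}z^2dz$. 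The second term will be absorbed at the end.

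For the weights, (\ref{Eq: behaviour of lamt', lamt''}) makes $\lamt$ increasing and bounded below by $\lamt(0)>0$. Since $\alpha\le\frac23$, this gives $\lamt^{3-3\alpha}\ge c\lamt$ for the $j=1$ viscous term. For the $j=2$ viscous term, the weight $\lamt^{3\alpha-2}\lamt^{3-3\alpha}=\lamt$ dominates $c\lamt^{3\alpha-1}$, again because $3\alpha-2\le 0$. The $L^2$ velocity pieces of $\mathcal{D}_{\rm tem}$ come with the weights $\lamt$ and $\lamt^{3\alpha-2}\lamt=\lamt^{3\alpha-1}$, which are exactly what is needed. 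It therefore remains to show, for $g=\ps^j\varphi$,
\begin{equation*}
\intol g^2w^{3\alpha}z^2dz\lesssim \intol g^2w^3z^4dz+\intol (zg_z)^2w^{3\alpha}z^2dz .
\end{equation*}

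To prove this weighted inequality I would split $[0,1]$ at $z=\frac14$.

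On $[\frac14,1]$, $z^2\simeq z^4$, and $w^{3\alpha}$ versus $w^3$ is the only issue. Near $z=1$, $w^{3\alpha}\ge c\,w^3$ because $w$ is bounded and $\alpha\le1$, so this region is dominated by the first term.

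On $[0,\frac14]$, $w\simeq1$ by Lemma \ref{Lemma: General LE}, so I need $\int_0^{1/4}g^2z^2\lesssim \int_{1/4}^{1/2}g^2+\int_0^{1/2}z^4g_z^2$. For this, integrate $(z^3)'g^2$ on $[0,a]$:
\begin{equation*}
3\int_0^a z^2g^2dz=a^3g(a)^2-2\int_0^a z^3gg_zdz .
\end{equation*}
Cauchy--Schwarz on the last term lets me absorb half of $\int_0^a z^2g^2$. Averaging over $a\in[\frac14,\frac12]$ then turns $a^3g(a)^2$ into $\int_{1/4}^{1/2}g^2$. This is the Hardy inequality (\ref{hardy 1}) in disguise.

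Combining the pieces yields $\lamt\intol g^2w^{3\alpha}z^2\lesssim \lamt\intol g^2w^3z^4+\lamt\intol(zg_z)^2w^{3\alpha}z^2$. Inserting the first bound and absorbing the $C\omega_0^2$ term for $\omega_0$ small gives both estimates after integrating in $s$.

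The main obstacle, such as it is, is the weight mismatch: $z^2$ versus $z^4$ at the origin, and $w^{3\alpha}$ versus $w^3$ at the boundary. The Hardy/averaging step handles the origin. The weight-exponent bookkeeping is where the restriction $\alpha\le\frac23$ enters.
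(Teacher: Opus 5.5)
You have the same structure as the paper's proof: a Hardy-type reduction to $\int g^2w^3z^4+\int (zg_z)^2w^{3\alpha}z^2$, the splitting of $z\ps^j\pz\varphi$ against the viscous quantity with the $\omega_0^2$ remainder absorbed, and the comparison of time weights using $\alpha\le\frac23$. Those parts, and your averaging argument at the origin, are fine.

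\textbf{Gap on $[\frac14,1]$.} The step there fails. To dominate the integrand $g^2w^{3\alpha}z^2$ by $g^2w^3z^4$ you need $w^{3\alpha}\lesssim w^3$. What you wrote, $w^{3\alpha}\ge c\,w^3$, is the opposite inequality, so it gives nothing. By the physical vacuum condition (Lemma \ref{Lemma: General LE}), $w\simeq 1-z\to0$ at $z=1$, and $3\alpha<3$. Hence $w^{3\alpha-3}$ blows up and the left side carries more weight near the vacuum boundary, not less. Concretely, take $g$ a smooth bump equal to $1$ on $[1-\epsilon,1]$. Then $\int_{1/4}^1 g^2w^{3\alpha}\sim\epsilon^{1+3\alpha}$, while $\int_{1/4}^1 g^2w^3\sim\epsilon^4$, so the ratio grows like $\epsilon^{3\alpha-3}$. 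Near $z=1$ the zeroth-order term alone cannot control the left side; the derivative term has to be used there as well.

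\textbf{Fix.} Use a Hardy inequality at the vacuum boundary too, which is what the paper's ``it follows from Hardy's inequality'' covers at both ends of $[0,1]$.
\begin{itemize}
\item Localize with a cutoff supported in $[\frac12,1]$. The derivative of the cutoff produces only $\int_{1/2}^{3/4}g^2$, which is controlled by $\int g^2w^3z^4$ since $w,z\simeq1$ there.
\item With $d=1-z\simeq w$, apply (\ref{hardy 1}) with $k=3\alpha+2>1$ to get $\int d^{3\alpha}g^2\lesssim\int d^{3\alpha+2}(g^2+g_z^2)$.
\item The derivative part is bounded by $\int d^{3\alpha}g_z^2\lesssim\int (zg_z)^2w^{3\alpha}z^2$.
\item For the zeroth-order part: if $\alpha\ge\frac13$, then $d^{3\alpha+2}\le d^3$ and you are done.
\item If $\alpha<\frac13$, apply (\ref{hardy 1}) once more with $k=3\alpha+4$. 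This lowers the zeroth-order weight to $d^{3\alpha+4}\le d^3$, and the new derivative weight $d^{3\alpha+4}$ is still at most $d^{3\alpha}$.
\end{itemize}
With this replacement the weighted inequality holds on all of $[0,1]$, and the rest of your argument goes through as written.
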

\begin{proof}
    For the low order temporal dissipation, it follows from Hardy's inequality that
\begin{equation}
    \begin{aligned}
        &\int_0^\tau\lamt(s)\intol\left((\psphi)^2+(z\ps\pzphi)^2\right)w^{3\alpha}z^2dzds\\
        \le&C\int_0^\tau\lamt(s)\intol(\psphi)^2w^{3}z^4+(z\ps\pzphi)^2w^{3\alpha}z^2dzds\\
        \le&C\int_0^\tau\intol\lamt(\psphi)^2w^{3}z^4ds+\int_0^\tau\intol\lamt(\psphi)^2(z\pz\varphi)^2w^{3\alpha}z^2dzds\\
        &+C\int_0^\tau\intol\lamt^{3-3\alpha} [(\Flm)\cdot z\ps\pzphi-\psphi\cdot z\pz\varphi]^2w^{3\alpha}z^2ds\\
        \le&C\int_0^\tau\mathcal{D}_{\rm tem}(s)ds.
    \end{aligned}
\end{equation}
For the high order temporal dissipation, it holds similarly that
\begin{equation}
    \begin{aligned}
        \int_0^\tau\lamt^{3\alpha-1}(s)\intol\left((\ps^2\varphi)^2+(z\ps^2\pzphi)^2\right)w^{3\alpha}z^2dzdz\lesssim\int_0^\tau\mathcal{D}_{\rm tem}(s)ds.
    \end{aligned}
\end{equation}
\end{proof}

The following Lemma shows coercivity of the energy functional (\ref{energy functional tilde E}).
\begin{lem}\label{Lemma: Coercivity of temporal energy}Let $\delta\in(\delta^*,0]$. For $-\tilde{\varepsilon}<\delta$ with $\tilde{\varepsilon}>0$ small enough, there holds
\begin{equation}
    \begin{aligned}
        \tilde{\mathcal{E}}_{\rm tem}(\tau)
        \ge&\tilde{C}_1\mathcal{E}_{\rm tem}(\tau)-\tilde{C}_2\mathcal{E}_{\rm tem}(0)\\
        &-\tilde{C}_3\int_0^\tau\lamt^{-\frac{3}{2}\alpha}(s)\mathcal{E}_{\rm tem}(s)ds-\frac{\tilde{C}_4}{\sigma}\int_0^\tau\lamt^{-3\alpha}(s)\mathcal{D}_{\rm spt}(s)ds-\sigma\int_0^\tau\mathcal{D}_{\rm tem}(s)ds
    \end{aligned}
\end{equation}
for positive constants $\tilde{C}_1,\tilde{C}_2,\tilde{C}_3$ and a small enough positive constant $\sigma$ to be determined in Lemma \ref{Lemma: Est2}.
\end{lem}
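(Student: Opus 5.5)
We prove the coercivity by decomposing $\ptau^j\varphi$ into its weighted mean and a zero-mean part, applying the spectral gap (\ref{Eq: spectral gap}) to the latter, and controlling the former through the physical energy-dissipation identity (\ref{Eq: physical energy identity}).

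\textbf{Step 1: mean/oscillation split.} For $j=0,1$ write
\[
\ptau^j\varphi=\bar a_j+\phi_j,\qquad \bar a_j(\tau)=\frac{\intol\ptau^j\varphi\, w^3z^4dz}{\intol w^3z^4dz},
\]
so that $\phi_j$ has zero $w^3z^4$-mean. Since $\pz$ kills constants and cross terms vanish,
\[
\frac43\intol(z\ptau^j\pzphi)^2w^4z^2dz+3\delta\intol(\ptau^j\varphi)^2w^3z^4dz
=\frac43\intol w^4z^4|\pz\phi_j|^2dz+3\delta\intol w^3z^4\phi_j^2dz+3\delta\,\bar a_j^2\intol w^3z^4dz .
\]
By (\ref{Eq: spectral gap}), the first two terms are at least $c_0\big(\|w^{3/2}z^2\phi_j\|^2+\|w^2z^2\pz\phi_j\|^2\big)$. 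Hence $\tilde{\mathcal E}_{\rm tem}$ dominates $\tilde C_1\mathcal E_{\rm tem}$, minus a multiple of $\sum_j\lamt^{j(3\alpha-2)}\bar a_j^2$. For $j=0$ the zeroth-order term of $\mathcal E_{\rm tem}$ is recovered as $\phi_0^2+\bar a_0^2$, and the $\lamt(\ptau^{j+1}\varphi)^2$ terms appear with coefficient one. The negative remainder carries only the small factor $3|\delta|\le3\tilde\varepsilon$. It therefore suffices to show
\[
\bar a_0^2+\lamt^{3\alpha-2}\bar a_1^2\lesssim \mathcal E_{\rm tem}(0)+\int_0^\tau\lamt^{-\frac32\alpha}\mathcal E_{\rm tem}ds+\frac1\sigma\int_0^\tau\lamt^{-3\alpha}\mathcal D_{\rm spt}ds+\sigma\int_0^\tau\mathcal D_{\rm tem}ds+(\text{small})\,\mathcal E_{\rm tem}(\tau),
\]
and then absorb the last term using $\tilde\varepsilon$ and $\omega_0$ small.

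\textbf{Step 2: the mean $\bar a_1$.} This one is easy: by Cauchy–Schwarz, $\bar a_1^2\lesssim\intol(\ptau\varphi)^2w^3z^4dz$. Hence $\lamt^{3\alpha-2}\bar a_1^2\lesssim\lamt^{3\alpha-3}\cdot\lamt\intol(\ptau\varphi)^2w^3z^4dz\le\lamt^{3\alpha-3}\mathcal E_{\rm tem}$. Since $\lamt\ge\lamt(0)$ can be taken large after a time translation and $3\alpha-3<0$, this term is absorbed.

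\textbf{Step 3: the mean $\bar a_0$ (the main obstacle).} Following \cite[Lemma 4.2]{HJ}, expand the identity
\[
E(1+\varphi,\ptauphi)(\tau)-E(1,0)=E(1+\varphi_0,\varphi_1)-E(1,0)-\int_0^\tau D\,ds
\]
to first order in $\varphi$. The linear part of $E(1+\varphi,\ptauphi)-E(1,0)$ is
\[
\Big(\big(\tfrac{\lamt'}{\lamt}\big)^2-\tfrac{\tilde e}{\cdot}\Big)\text{-type terms}+\frac{\lamt'}{\lamt}\intol\ptauphi\, w^3z^4dz+\Big(\big(\tfrac{\lamt'}{\lamt}\big)^2-\tfrac{2\delta}{\lamt}\Big)\intol\varphi w^3z^4dz+\frac1\lamt\intol(\cdots)\varphi\,dz .
\]
Using (\ref{Eq: ode of lamt}), $(\lamt'/\lamt)^2=\tilde e-2\delta/\lamt$, and the enthalpy equation (\ref{General LE})$_2$, the $1/\lamt$ terms combine so that the coefficient of $\bar a_0$ is $\tilde e\intol w^3z^4dz+O(\lamt^{-1})$, which is bounded below. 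We then solve for $\bar a_0$. The contributions are:
\begin{itemize}
\item The $\ptauphi$ term is bounded by $\lamt^{-1/2}\mathcal E_{\rm tem}^{1/2}$; squared, it gives $\lamt^{-1}\mathcal E_{\rm tem}$, which becomes an integrated $\lamt^{-3\alpha/2}$ term after the differentiation below, or is absorbed directly.
\item The quadratic remainders are $O(\mathcal E_{\rm tem})$ times $\omega_0$ or $\lamt^{-1}$.
\item The initial difference is $\lesssim\mathcal E_{\rm tem}(0)^{1/2}$.
\item The dissipation integral $\int_0^\tau D\,ds$ enters linearly, so it must be bounded in square-root size.
\end{itemize}

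\textbf{Step 4: handling the dissipation integral.} Rather than squaring $\int_0^\tau D\,ds$ naively, I would write $\bar a_0(\tau)=\bar a_0(0)+\int_0^\tau\bar a_0'\,ds$ and handle $\bar a_0^2$ through $\frac{d}{d\tau}\bar a_0^2$. Alternatively, I would use the weighted mean of equation (\ref{phiEqn: linearized, G}) tested against $w^3z^4$. Because $\intol\mathcal L_{w^4}\varphi\, w^3z^4dz=0$ and $\intol V_{w,\alpha}w^3z^4(1+\varphi)dz$ is a pure boundary term vanishing by (\ref{BC, phi}), only nonlinear terms drive the mean.

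The viscous term $\lamt^{3-3\alpha}\mathcal N_2$ averaged against $w^3z^4$ must be integrated by parts onto $\varphi$. It then yields terms of the form $\lamt^{3-3\alpha}\intol w^{3\alpha}z^4\varphi\,\ptau\pz\varphi\cdots$. These I would bound by Cauchy–Schwarz with parameter $\sigma$, splitting into:
\begin{itemize}
\item $\sigma\,\lamt^{3-3\alpha}\intol[(1+\varphi)z\ptau\pzphi-\ptauphi z\pzphi]^2w^{3\alpha}z^2dz\le\sigma\mathcal D_{\rm tem}$;
\item $\sigma^{-1}\lamt^{3-3\alpha}\intol(\pz\varphi)^2w^{3\alpha}\cdots$, controlled by $\sigma^{-1}\lamt^{-3\alpha}\mathcal D_{\rm spt}$. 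This uses the $w^{11-6\alpha}\lamt^{6\alpha-5}$ term in $\mathcal D_{\rm spt}$ together with the Hardy inequalities (\ref{hardy 1})–(\ref{hardy 2}) to trade the spatial weight $w^{3\alpha}$ for $w^{11-6\alpha}$, after matching the $\lamt$ powers.
\end{itemize}
Remaining $\lamt'$-weighted mean terms decay like $\lamt^{-1}\le\lamt^{-3\alpha/2}$ (as $\alpha\le\frac23$), giving the $\tilde C_3$ term.

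This weight bookkeeping, that is, getting $\lamt^{3-3\alpha}$ against $w^{3\alpha}$ into exactly $\lamt^{-3\alpha}\mathcal D_{\rm spt}$, is where I expect the real difficulty. I would attempt it first by pairing the dissipation density with $\lamt^{1-3\alpha}\cdot\lamt^{2}$ and using that $\lamt^{6\alpha-5}\lamt^{-3\alpha}\cdots$ matches after one Hardy step near $z=1$ and the cutoff region near $z=0$.
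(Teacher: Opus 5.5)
Your Steps 1--3 follow the paper. The spectral gap is applied to the zero-mean parts. The mean of $\ptau\varphi$ is absorbed into $\lamt\intol(\ptau\varphi)^2w^3z^4dz$; the paper does this with weights $d_0,d_1$ rather than by taking $\lamt(0)$ large. The mean of $\varphi$ is read off from the linearized physical energy, with coefficient $\tilde e-3\delta/\lamt\ge\tilde e$.

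The gap is Step 4, which is exactly where the terms $\sigma\int_0^\tau\mathcal{D}_{\rm tem}\,ds$ and $\sigma^{-1}\int_0^\tau\lamt^{-3\alpha}\mathcal{D}_{\rm spt}\,ds$ must come from. Its premise is wrong: $\int_0^\tau D\,ds$ does not need a square-root-size bound. The energy identity itself gives $\int_0^\tau D\,ds\le|E(1+\varphi_0,\varphi_1)-E(1,0)|+|E(1+\varphi,\ptauphi)-E(1,0)|\lesssim\omega_0$. Hence $(\int_0^\tau D\,ds)^2\lesssim\int_0^\tau D\,ds$, and a linear bound suffices.

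The estimate you actually propose also fails. Averaging the equation turns the viscous term into a product of the undifferentiated factor $z\pzphi$ with the velocity-gradient factor, weighted by $\lamt^{3-3\alpha}$ (still $\lamt^{2-3\alpha}$ after inverting $(\lamt\bar a_0')'$). Putting $\sigma^{-1}$ on the $\varphi$-factor leaves $\sigma^{-1}\lamt^{2-3\alpha}\intol(\pz\varphi)^2w^{3\alpha}\cdots$. But the only part of $\lamt^{-3\alpha}\mathcal{D}_{\rm spt}$ without $\ptau$ carries the weight $\lamt^{3\alpha-5}$. Matching would require $2-3\alpha\le3\alpha-5$, i.e.\ $\alpha\ge\frac76$. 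Nothing in $\mathfrak{E}$ or $\mathfrak{D}$ controls an undifferentiated quantity integrated in time against a growing weight. (The averaged equation also keeps the linear term $3\delta\bar a_0$, so it is not driven only by nonlinear terms.)

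The missing idea is to stay with the energy identity and use that $D$ is purely quadratic in $(\Flm)z\ptau\pzphi-\ptauphi\cdot z\pzphi$, with weight $\lamt^{2-3\alpha}=\lamt^{-1}\cdot\lamt^{3-3\alpha}$. Split $\lamt^{-1}\le\sigma+\frac{1}{4\sigma}\lamt^{-2}$. The first piece is $\lesssim\sigma\mathcal{D}_{\rm tem}$. The second is $\frac{C}{\sigma}\lamt^{1-3\alpha}\intol\mu[\cdots]^2w^{3\alpha}z^2dz\lesssim\frac{C}{\sigma}\lamt^{-3\alpha}\mathcal{D}_{\rm spt}$. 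This uses the $\ptau$-part of $\mathcal{D}_{\rm spt}$, not the $w^{11-6\alpha}$ part: Lemma \ref{Lemma: higher spatial est} gives $\lamt\intol(\ptau\pz\varphi)^2w^{6\alpha-5}dz\lesssim\mathcal{D}_{\rm spt}$, and $w^{3\alpha}z^4\lesssim w^{6\alpha-5}$.

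The paper handles $\star_1$ by the time interpolation $|g(\tau)|^2\lesssim|g(0)|^2+\|g\|_{L^2}\|g'\|_{L^2}$ with weights $\lamt^{\pm\varepsilon}$, $2\varepsilon=1-\frac32\alpha$. This produces the $\lamt^{-\frac32\alpha}\mathcal{E}_{\rm tem}$ term and avoids needing $\lamt(0)$ large.

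Finally, your first, undeveloped alternative would in fact have worked, and bypasses the energy identity entirely. Since $\bar a_0'=\bar a_1$, one has $\frac{d}{d\tau}\bar a_0^2\le\sigma'\lamt\bar a_1^2+\frac{1}{\sigma'}\lamt^{-1}\bar a_0^2\lesssim\sigma'\mathcal{D}_{\rm tem}+\frac{1}{\sigma'}\lamt^{-1}\mathcal{E}_{\rm tem}$. Moreover $\lamt^{-1}\lesssim\lamt^{-\frac32\alpha}$ because $\alpha\le\frac23$. This gives the lemma with a $\sigma$-dependent $\tilde C_3$, which is harmless once $\sigma$ is fixed in Lemma \ref{Lemma: Est2}.
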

\begin{proof}
We let
\begin{equation}
    \tilde{g}^j=\ptau^j\varphi-\frac{1}{\intol w^3z^4dz}\intol\ptau^j\varphi w^3z^4dz,
\end{equation}
where $j=0,1$, so $\int_0^1\tilde{g}^j w^3z^4dz=0$. For $-\tilde{\varepsilon}<\delta$ with $\tilde{\varepsilon}>0$ small enough, it follows from (\ref{Eq: spectral gap}) that
\begin{equation}
    \intol(\mathcal{L}_{w^4}\tilde{g}^j+3\delta\tilde{g}^j)\tilde{g}^j\cdot w^3z^4dz\ge c_0\left(\intol (\tilde{g}^j)^2w^3z^4dz+\intol (\pz\tilde{g}^j)^2w^4z^4dz\right).
\end{equation}
Meanwhile, by direct computation,
\begin{equation}
   \begin{aligned}
        &\intol(\mathcal{L}_{w^4}\tilde{g}^j+3\delta\tilde{g}^j)\tilde{g}^j\cdot w^3z^4dz\\
        =&\intol\frac{4}{3}(z\ptau^j\pzphi)^2w^4z^2dz+\intol3\delta(\tilde{g}^j)^2w^3z^4dz\\
        =&\intol\frac{4}{3}(z\ptau^j\pzphi)^2w^4z^2dz+\intol3\delta(\ptau^j\varphi)^2w^3z^4dz-3\delta\frac{1}{\intol w^3z^4dz}\left(\intol\ptau^j\varphi w^3z^4dz\right)^2,
    \end{aligned}
\end{equation}
we thus have
\begin{equation}
    \begin{aligned}
        &\intol\frac{4}{3}(z\ptau^j\pzphi)^2w^4z^2dz+\intol3\delta(\ptau^j\varphi)^2w^3z^4dz\\
        \ge& c_0\left(\intol (\ptau^j\varphi)^2w^3z^4dz+\intol (z\ptau^j\pz\varphi)^2w^4z^2dz\right)-(3|\delta|+c_0)\frac{1}{\intol w^3z^4dz}\left(\intol\ptau^j\varphi w^3z^4dz\right)^2.
    \end{aligned}
\end{equation}
In particular, for $j=1$, we multiply $\lamt^{3\alpha-2}$ and apply H\"older inequality to the last term on the right-hand side to get
\begin{equation}
    \begin{aligned}
        &\lamt^{3\alpha-2}\intol\frac{4}{3}(z\ptau\pzphi)^2w^4z^2dz+\lamt^{3\alpha-2}\intol3\delta(\ptau\varphi)^2w^3z^4dz\\
        \ge& c_0\lamt^{3\alpha-2}\left(\intol (\ptau\varphi)^2w^3z^4dz+\intol (z\ptau\pz\varphi)^2w^4z^2dz\right)-C_1\lamt^{3\alpha-2}\intol(\ptau\varphi)^2w^3z^4dz,
    \end{aligned}
\end{equation}
while for $j=0$, we have
\begin{equation}
    \begin{aligned}
        &\intol\frac{4}{3}(z\pzphi)^2w^4z^2dz+\intol3\delta\varphi^2w^3z^4dz\\
        \ge& c_0\left(\intol\varphi^2w^3z^4dz+\intol (z\pz\varphi)^2w^4z^2dz\right)-(3|\delta|+c_0)\frac{1}{\intol w^3z^4dz}\left(\intol\varphi w^3z^4dz\right)^2.
    \end{aligned}
\end{equation}
Now let $d_0,d_1>0$ to be determined, consider the linear combination of the energies
\begin{equation}\label{linear combination of the energies}
    \begin{aligned}
        &\sum_{j=0}^1d_j\lamt^{j(3\alpha-2)}\left(\intol\lamt(\tau)(\ptau^{j+1}\varphi)^2w^3z^4+\frac{4}{3}(z\ptau^j\pz\varphi)^2w^4z^2+3\delta(\ptau^j\varphi)^2w^3z^4dz\right)\\
        \ge&(d_0-C_1d_1\lamt^{3\alpha-3}(\tau))\intol\lamt(\ptau\varphi)^2w^3z^4dz+d_1\lamt^{3\alpha-2}\intol\lamt(\ptau^{2}\varphi)^2w^3z^4dz\\
        &+\sum_{j=0}^1d_jc_0\lamt^{j(3\alpha-2)}\left(\intol(\ptau^j\varphi)^2w^3z^4dz+\intol (z\ptau^j\pz\varphi)^2w^4z^2dz\right)\\
        &-(3|\delta|+c_0)\frac{d_0}{\intol w^3z^4dz}\left(\intol\varphi w^3z^4dz\right)^2.
    \end{aligned}
\end{equation}
In order to control the negative term on the right-hand side of (\ref{linear combination of the energies}), we compute directly to get
\begin{equation}\label{Eq: Linearization of the physical energy, linear expanding soln 0}
    \begin{aligned}
        E(\Flm,\ptauphi)-E(1,0)=&\intol\left[\frac{\lamt'}{\lamt}\left(\frac{\lamt'}{\lamt}\varphi+\ptauphi\right)+\frac{1}{2}\left(\frac{\lamt'}{\lamt}\varphi+\ptauphi\right)^2\right]w^3z^4dz\\
        &+\frac{1}{\lamt}\intol3[\frac{1}{3}(3\varphi+z\pzphi)+(J^{-\frac{1}{3}}-1)]w^4z^2dz\\
        &-\frac{1}{\lamt}\intol\delta\varphi w^3z^4dz+\frac{1}{\lamt}\intol\left(\frac{4w'}{z}+\delta\right)\frac{\varphi^2}{\Flm} w^3z^4dz.
    \end{aligned}
\end{equation}
Using (\ref{Eq: ode of lamt}), the following term on the right-hand side of (\ref{Eq: Linearization of the physical energy, linear expanding soln 0}) can be rewritten as 
\begin{equation}
    \intol\frac{\lamt'}{\lamt}\left(\frac{\lamt'}{\lamt}\varphi+\ptauphi\right)w^3z^4dz=\frac{\lamt'}{\lamt}\intol\ptauphi w^3z^4dz+\left(\tilde{e}-\frac{2\delta}{\lamt}\right)\intol\varphi w^3z^4dz.
\end{equation}
While for the last term on the third line of (\ref{Eq: Linearization of the physical energy, linear expanding soln 0}), it follows from integration by parts that
\begin{equation}
    \begin{aligned}
        &\frac{1}{\lamt}\intol\left(\frac{4w'}{z}+\delta\right)\frac{\varphi^2}{\Flm} w^3z^4dz\\
        =&\frac{1}{\lamt}\intol\frac{\varphi^2}{\Flm} \pz w^4\cdot z^3dz+\frac{1}{\lamt}\intol\frac{\delta\varphi^2}{\Flm} w^3z^4dz\\
        =&-\frac{1}{\lamt}\intol w^4\pz\left(z^3\frac{\varphi^2}{\Flm}\right)dz+\frac{1}{\lamt}\intol\frac{\delta\varphi^2}{\Flm} w^3z^4dz\\
        =&-\frac{1}{\lamt}\intol[\frac{3\varphi^2}{\Flm}+\frac{2\varphi\cdot z\pzphi}{\Flm}-\frac{\varphi^2\cdot z\pzphi}{(\Flm)^2}]w^4z^2dz+\frac{1}{\lamt}\intol\frac{\delta\varphi^2}{\Flm} w^3z^4dz.
    \end{aligned}
\end{equation}
As a result, (\ref{Eq: Linearization of the physical energy, linear expanding soln 0}) can be rewritten as
\begin{equation}\label{Eq: Linearization of the physical energy, linear expanding soln}
    \medmath{
    \begin{aligned}
        &E(\Flm,\ptauphi)-E(1,0)-\underbrace{\frac{\lamt'}{\lamt}\intol\ptauphi w^3z^4dz}_{\star_1}-\left(\tilde{e}-\frac{3\delta}{\lamt}\right)\intol\varphi w^3z^4dz\\
        =&\intol\frac{1}{2}\left(\frac{\lamt'}{\lamt}\varphi+\ptauphi\right)^2w^3z^4dz\\
        &+\underbrace{\frac{1}{\lamt}\intol[3\varphi+z\pzphi-\frac{3\varphi^2}{\Flm}-\frac{2\varphi\cdot z\pzphi}{\Flm}+\frac{\varphi^2\cdot z\pzphi}{(\Flm)^2}+3(J^{-\frac{1}{3}}-1)]w^4z^2dz}_{\star_2}+\frac{1}{\lamt}\intol\frac{\delta\varphi^2}{\Flm} w^3z^4dz.
    \end{aligned}
    }
\end{equation}
In order to control $(\star_1)$, we need the following interpolation inequality
\begin{equation}\label{Eq: interpolation ineq}
\begin{aligned}
    |g(t)|^2\lesssim|g(0)|^2+\|g\|_{L^2_t}\|\pt g\|_{L^2_t}
\end{aligned}
\end{equation}
for $g\in H^1([0,+\infty))$ a function of $t\in[0,+\infty)$, which can be proved by the fundamental theorem of calculus and H\"older inequality. Next, recalling the fact that $b_0\le\frac{\lamt'}{\lamt}\le b_1$, combining with (\ref{Eq: interpolation ineq}), $(\star_1)$ can be controlled as
\begin{equation}\label{Eq: control of 1st order term}
    \begin{aligned}
        (\star_1)\lesssim&\intol|\ptauphi|w^3z^4dz\\
        \lesssim&\lamt^{-\varepsilon}(\tau)\intol\|\lamt^{\varepsilon}(s)\psphi\|_{L^2_s}^{\frac{1}{2}}\left(\|\lamt^{\varepsilon}(s)\psphi\|_{L^2_s}^{\frac{1}{2}}+\|\lamt^{\varepsilon}(s)\ps^2\varphi\|_{L^2_s}^{\frac{1}{2}}\right)w^3z^4dz+\intol|\ptau\varphi(0,z)|w^3z^4dz\\
        \lesssim&\left(\intotl\lamt^{2\varepsilon}(s)(\psphi)^2w^3z^4dzds\right)^{\frac{1}{2}}+\lamt^{-2\varepsilon}(\tau)\left(\intotl\lamt^{2\varepsilon}(s)(\ps^2\varphi)^2w^3z^4dzds\right)^{\frac{1}{2}}+\sqrt{\mathcal{E}_{\rm tem}(0)}\\
        \lesssim&\left(\intotl\lamt^{2\varepsilon}(s)(\psphi)^2w^3z^4dzds\right)^{\frac{1}{2}}+\left(\intotl\lamt^{-2\varepsilon}(s)(\ps^2\varphi)^2w^3z^4dzds\right)^{\frac{1}{2}}+\sqrt{\mathcal{E}_{\rm tem}(0)},\\
    \end{aligned}
\end{equation}
where we use $\lamt(s)\lesssim\lamt(\tau)$ for $s\le\tau$, and the exact value of $\varepsilon$ will be determined later in the proof of this Lemma. For $(\star_2)$, applying Taylor's expansion to $J^{\frac{1}{3}}-1$, we obtain
$$(\star_2)\lesssim\frac{1}{\lamt}\intol(z\pzphi)^2w^4z^2dz.$$
Then it follows from (\ref{Eq: Linearization of the physical energy, linear expanding soln}) that
\begin{equation}
    \begin{aligned}
        &\left|\intol\varphi w^3z^4dz\right|\\
        \lesssim&|E(\Flm,\ptauphi)-E(1,0)|+\intol\left(\varphi^2+(\ptauphi)^2\right)w^3z^4dz+\frac{1}{\lamt}\intol(z\pzphi)^2w^4z^2dz\\
        &+\left(\intotl\lamt^{2\varepsilon}(s)(\psphi)^2w^3z^4dzds\right)^{\frac{1}{2}}+\left(\intotl\lamt^{-2\varepsilon}(s)(\ps^2\varphi)^2w^3z^4dzds\right)^{\frac{1}{2}}.
    \end{aligned}
\end{equation}
Combining with the following estimate
\begin{equation}
    \begin{aligned}
        &|E(\Flm,\ptauphi)-E(1,0)|\\
        =&\left| E(1+\varphi_0,\varphi_1)-E(1,0)\right.\\
        &\left.\ \ \ \ \ -\int_0^\tau\lamt^{2-3\alpha}(s)\intol\frac{4}{3}\mu\left(w^3J^{-1}\right)^{\alpha}z^2\frac{[(\Flm)z\ps\pzphi-\ps\varphi\cdot z\pzphi]^2}{\Flm+z\pzphi}dzds\right|\\
        \lesssim&\sqrt{\mathcal{E}_{\rm tem}(0)}+\int_0^\tau\lamt^{2-3\alpha}(s)\intol\mu[(\Flm)z\ps\pzphi-\ps\varphi\cdot z\pzphi]^2w^{3\alpha}z^2dzds,
    \end{aligned}
\end{equation}
where we use the physical energy-dissipation identity (\ref{Eq: physical energy identity}), we obtain
\begin{equation}
   \begin{aligned}
        &\left|\intol\varphi w^3z^4dz\right|\\
        \lesssim&\sqrt{\mathcal{E}_{\rm tem}(0)}+\intol\left(\varphi^2+(\ptauphi)^2\right)w^3z^4dz+\frac{1}{\lamt}\intol(z\pzphi)^2w^4z^2dz\\
        &+\int_0^\tau\lamt^{2-3\alpha}(s)\intol\mu[(\Flm)z\ps\pzphi-\ps\varphi\cdot z\pzphi]^2w^{3\alpha}z^2dzds\\
        &+\left(\intotl\lamt^{2\varepsilon}(s)(\psphi)^2w^3z^4dzds\right)^{\frac{1}{2}}+\left(\intotl\lamt^{-2\varepsilon}(s)(\ps^2\varphi)^2w^3z^4dzds\right)^{\frac{1}{2}}.
    \end{aligned}
\end{equation}
So
\begin{equation}\label{Eq: Temporal coercivity 10}
\medmath{
    \begin{aligned}
        &\sum_{j=0}^1d_j\lamt^{j(3\alpha-2)}\left(\lamt(\tau)\intol(\ptau^{j+1}\varphi)^2w^3z^4dz+\intol\frac{4}{3}(z\ptau^j\pz\varphi)^2w^4z^2dz+\intol3\delta(\ptau^j\varphi)^2w^3z^4dz\right)\\
        \ge&(d_0-C_1d_1\lamt^{3\alpha-3}(\tau))\intol\lamt(\ptau\varphi)^2w^3z^4dz+d_1\lamt^{3\alpha-2}\intol\lamt(\ptau^{2}\varphi)^2w^3z^4dz\\
        &+\sum_{j=0}^1d_jc_0\lamt^{j(3\alpha-2)}\left(\intol(\ptau^j\varphi)^2w^3z^4dz+\intol (z\ptau^j\pz\varphi)^2w^4z^2dz\right)-C_3d_0(\mathcal{E}_{\rm tem}(0)+\mathcal{E}^2_{\rm tem}(\tau))\\&
        -\underbrace{d_0C_2\left(\intotl\lamt^{2\varepsilon}(s)(\psphi)^2w^3z^4dzds+\intotl\lamt^{-2\varepsilon}(s)(\ps^2\varphi)^2w^3z^4dzds\right)}_{\triangle_1}\\
        &-\underbrace{C_4d_0\int_0^\tau\lamt^{2-3\alpha}(s)\intol\mu[(\Flm)z\ps\pzphi-\ps\varphi\cdot z\pzphi]^2w^{3\alpha}z^2dzds}_{\triangle_2}.
    \end{aligned}
    }
\end{equation}
Choosing $d_0,d_1>0$ appropriately, making use of the fact that $3\alpha-2\le 0$, we can ensure that $d_0-C_1d_1\lamt^{3\alpha-3}(\tau)\ge\tilde{C}>0$. The term $(\triangle_2)$ can be controlled as follows
\begin{equation}
\begin{aligned}
     &\int_0^\tau\lamt^{2-3\alpha}(s)\intol\mu[(\Flm)z\ps\pzphi-\ps\varphi\cdot z\pzphi]^2w^{3\alpha}z^2dzds\\
     \le&\int_0^\tau\left(\sigma+\frac{C}{\sigma}\lamt^{-2}(s)\right)\lamt^{3-3\alpha}(s)\intol\mu[(\Flm)z\ps\pzphi-\ps\varphi\cdot z\pzphi]^2w^{3\alpha}z^2dzds\\
     \le&\sigma\int_0^\tau\mathcal{D}_{\rm tem}(s)ds+\frac{C}{\sigma}\int_0^\tau\lamt^{-3\alpha}(s)\mathcal{D}_{\rm spt}(s)ds,
\end{aligned}
\end{equation}
where $\sigma>0$ is chosen small enough and to be determined in Lemma \ref{Lemma: Est2}. To control $(\triangle_1)$, we require
\begin{equation*}
    \begin{cases}
        2\varepsilon<1,\\
        -2\varepsilon<3\alpha-1,
    \end{cases}
\end{equation*}
i.e., $1-3\alpha<2\varepsilon<1$. Under the assumption $0<\alpha\le\frac{2}{3}$, by choosing $2\varepsilon=1-\frac{3}{2}\alpha$, there holds
\begin{equation}
    \begin{aligned}
        (\triangle_1)\le& d_0\tilde{C}_2\int_0^\tau\lamt^{-\frac{3}{2}\alpha}(s)\mathcal{E}_{\rm tem}(s)ds.
    \end{aligned}
\end{equation}
We then obtain
\begin{equation}
   \begin{aligned}
        &\sum_{j=0}^1d_j\lamt^{j(3\alpha-2)}\left(\lamt(\tau)\intol(\ptau^{j+1}\varphi)^2w^3z^4dz+\intol\frac{4}{3}(z\ptau^j\pz\varphi)^2w^4z^2dz+\intol3\delta(\ptau^j\varphi)^2w^3z^4dz\right)\\
        \ge&\tilde{C}_1\mathcal{E}_{\rm tem}(\tau)-\tilde{C}_2\mathcal{E}_{\rm tem}(0)-\tilde{C}_3\int_0^\tau\lamt^{-\frac{3}{2}\alpha}(s)\mathcal{E}_{\rm tem}(s)ds-\frac{\tilde{C}_4}{\sigma}\int_0^\tau\lamt^{-3\alpha}(s)\mathcal{D}_{\rm spt}(s)ds-\sigma\int_0^\tau\mathcal{D}_{\rm tem}(s)ds.
    \end{aligned}
\end{equation}
\end{proof}

With the preparation of Lemma \ref{Lemma: Coercivity of temporal energy} and Lemma \ref{Lemma: Coercivity of temporal dissipation}, we can proceed the following temporal energy estimate.
\begin{lem}\label{Lemma: Est2}Let $-\tilde\varepsilon<\delta$ for $\tilde\varepsilon>0$ small enough. Suppose (\ref{a priori assumptions})-(\ref{a priori assumptions 2}) holds for small constant $\omega_0$. Then
\begin{equation}\label{Eq: Est2}
    \mathcal{E}_{\rm tem}(\tau)+\int_0^\tau\mathcal{D}_{\rm tem}(s)ds\le C\mathcal{E}_{\rm tem}(0)+C\int_0^\tau\lamt^{-\min\{\frac{3}{2}\alpha,\frac{1}{2}\}}(s)\mathcal{E}_{\rm tem}(s)+\lamt^{-3\alpha}(s)\mathcal{D}_{\rm spt}(s)ds.
\end{equation}
\end{lem}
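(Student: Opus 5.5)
We prove Lemma \ref{Lemma: Est2} by the energy method sketched in the introduction. For $j=0,1$ we apply $\ptau^j$ to (\ref{phiEqn: linearized, G}), multiply by $\lamt^{j(3\alpha-2)}\ptau^{j+1}\varphi\, w^3z^4$, and integrate over $[0,1]\times[0,\tau]$.

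\emph{Inertial term.} For $j=0$ the terms $\lamt\ptau^2\varphi+\lamt'\ptau\varphi$ give
\[
\tfrac12\ptau\big(\lamt(\ptau\varphi)^2\big)+\tfrac12\lamt'(\ptau\varphi)^2 ,
\]
and since $\lamt'\ge b_0\lamt$ the second piece is the dissipation $\lamt\intol(\ptau\varphi)^2w^3z^4dz$. For $j=1$ the commutator produces $2\lamt'\ptau^2\varphi+\lamt''\ptau\varphi$. Multiplying by $\lamt^{3\alpha-2}$ and moving the weight inside the time derivative produces $(3\alpha-2)\lamt^{3\alpha-3}\lamt'\cdot\lamt(\ptau^2\varphi)^2$. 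This term has the good sign, or is absorbed using $3\alpha-2\le0$ together with $\lamt'\sim\lamt$; the net coefficient $\tfrac32-\tfrac{2-3\alpha}{2}\ge\tfrac12$ stays positive. The cross term $\lamt''\ptau\varphi\,\ptau^2\varphi$ is bounded by Cauchy--Schwarz against the two $\lamt$-weighted dissipations, using $\lamt^{3\alpha-2}\le1$.

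\emph{Potential terms.} The terms $3\delta\varphi+\mathcal{L}_{w^4}\varphi$ integrate by parts; no boundary term arises because $w(1)=0$. They give $\tfrac12\ptau$ of the $\tfrac43(z\ptau^j\pzphi)^2w^4z^2+3\delta(\ptau^j\varphi)^2w^3z^4$ part of $\tilde{\mathcal{E}}_{\rm tem}$, plus the weight derivative $(3\alpha-2)\lamt^{3\alpha-3}\lamt'(\dots)$. When $\delta\le0$ this derivative term is nonpositive. If it has the wrong sign, it is bounded by $\lamt^{3\alpha-2}$ times energy. For $j=1$ it is absorbed by the factor $\lamt^{-1}$ relative to the high-order energy, which is integrable because $\lamt^{-1}\le\lamt^{-\min\{\frac32\alpha,\frac12\}}$.

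\emph{Viscous term.} We keep $V_{w,\alpha}$ whole, as the paper indicates. Multiplying by $\ptau^{j+1}\varphi(1+\varphi)z^4w^3$ (i.e., inserting the factor $1+\varphi$ the operator carries) and integrating by parts, the boundary term vanishes by (\ref{BC, phi}). The top-order part is
\[
-\lamt^{3-3\alpha}\intol\tfrac43\mu\,(w^3J^{-1})^\alpha\frac{[(1+\varphi)z\ptau^{j+1}\pzphi-\ptau^{j+1}\varphi\, z\pzphi]^2}{1+\varphi+z\pzphi}\,z^2dz ,
\]
which is the $\mathcal{D}_{\rm tem}$ dissipation. For $j=1$ the extra factor $\ptau$ also falls on coefficients. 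The remainder terms are cubic: each has one factor bounded by $\omega_0$ in $L^\infty$ via (\ref{a priori assumptions 2}), and is bounded using Lemma \ref{Lemma: Coercivity of temporal dissipation} by $\omega_0\int\mathcal{D}_{\rm tem}$, or by $\omega_0\int\lamt^{-3\alpha}\mathcal{D}_{\rm spt}$ when a second spatial derivative appears.

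\emph{Nonlinear term $\mathcal{N}_1$.} The $\mathcal{N}_1$ terms are handled similarly. For $j=0$, integrating by parts moves $\pz$ onto $\ptau\varphi$, giving $\omega_0(\mathcal{E}_{\rm tem}+\mathcal{D}_{\rm tem})$. For $j=1$ we write the highest piece as a time derivative plus a remainder: $\ptau$ of a cubic energy term, bounded by $\omega_0\mathcal{E}_{\rm tem}(\tau)$, plus lower-order terms. Terms with the wrong weight are bounded using $\lamt^{-1/2}$ times energy via Young's inequality. This is where the $\lamt^{-1/2}$ rate comes from, since $\lamt\,\ptau\varphi^2$ pairs against $\lamt^{0}$ energy terms.

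\emph{Conclusion.} Summing with the weights $d_0,d_1$ yields
\[
\tilde{\mathcal{E}}_{\rm tem}(\tau)+c\int_0^\tau\mathcal{D}_{\rm tem}
\le C\mathcal{E}_{\rm tem}(0)+C\int_0^\tau\big(\lamt^{-\min\{\frac32\alpha,\frac12\}}\mathcal{E}_{\rm tem}+\lamt^{-3\alpha}\mathcal{D}_{\rm spt}\big)+C\omega_0\int_0^\tau\mathcal{D}_{\rm tem}.
\]
For $\delta>0$ the left side is already coercive. For $\delta\le0$ we insert Lemma \ref{Lemma: Coercivity of temporal energy}, choose $\sigma<c/2$ and $\omega_0$ small, and absorb the $\sigma\int_0^\tau\mathcal{D}_{\rm tem}$ and $\omega_0\int_0^\tau\mathcal{D}_{\rm tem}$ terms. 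The main obstacle is the $j=1$ viscous computation. There, $\ptau$ hits the nonlinear coefficient $(w^3J^{-1})^\alpha/(1+\varphi+z\pzphi)$ and the weight $\lamt^{3-3\alpha}$, and the resulting terms must be matched against dissipation that carries only $\lamt^{3\alpha-1}$ strength. We would first try bounding them by $\omega_0$ times the high-order $\mathcal{D}_{\rm tem}$, together with the Hardy inequality (\ref{hardy 1}).
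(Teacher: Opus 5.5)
Your outline follows the paper's scheme: multipliers $\lamt^{j(3\alpha-2)}\ptau^{j+1}\varphi\,w^3z^4$, keeping $V_{w,\alpha}$ whole, $\lamt^{\pm1/2}$ Young splitting for $\mathcal{N}_1$, and Lemma \ref{Lemma: Coercivity of temporal energy} at the end. The gap is the step you yourself flag as the main obstacle: you leave it open, and the route you suggest for it fails. Write $X_k:=(\Flm)z\ptau^k\pzphi-\ptau^k\varphi\, z\pzphi$, so that $\ptau X_1=X_2$ and $z\pz(\ptau^k\varphi/(\Flm))=X_k/(\Flm)^2$. When $\ptau$ falls on $\lamt^{3-3\alpha}$, multiplying by $\lamt^{3\alpha-2}$ and integrating by parts gives
\[
-(3-3\alpha)\lamt'\intol\tfrac43\mu z^2(w^3J^{-1})^\alpha\frac{X_1X_2}{\Flm+z\pzphi}dz .
\]
This term is bilinear with an $O(1)$ coefficient and no factor of $\omega_0$. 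So it cannot be bounded by $\omega_0$ times the high-order part of $\mathcal{D}_{\rm tem}$, with or without Hardy.

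The paper uses Young and $\lamt'\lesssim\lamt$ to bound this term by $\tfrac16\lamt\intol\mu z^2(w^3J^{-1})^\alpha\frac{X_2^2}{\Flm+z\pzphi}dz+C\lamt\intol\mu z^2(w^3J^{-1})^\alpha\frac{X_1^2}{\Flm+z\pzphi}dz$.
\begin{itemize}
\item The first piece is absorbed because the high-order viscous dissipation carries weight $\lamt^{3\alpha-2}\lamt^{3-3\alpha}=\lamt$. The weight $\lamt^{3\alpha-1}$ you mention applies only to bare $\ptau^2\varphi$ and $z\ptau^2\pzphi$, via Lemma \ref{Lemma: Coercivity of temporal dissipation}.
\item The second piece is bounded by the low-order viscous dissipation, since $\lamt\le\lamt^{3-3\alpha}$ for $\alpha\le\frac23$. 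It must be absorbed by the integrated $j=0$ estimate, by taking $d_0$ large relative to $d_1$.
\end{itemize}
Likewise, when $\ptau$ hits $(\Flm)^{-1}$ you get bare $\ptau^2\varphi$ against $X_1$ with weight $\lamt$. This closes only via the split $\lamt=\lamt^{\frac{3\alpha-1}{2}}\lamt^{\frac{3-3\alpha}{2}}$, which produces $\omega_1\lamt^{3\alpha-1}\intol(\ptau^2\varphi)^2w^{3\alpha}z^2dz$ (then Hardy) plus $\omega_1\lamt^{3-3\alpha}\intol X_1^2w^{3\alpha}z^2dz$. So the low-order viscous dissipation is needed again. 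As written, your final inequality, with only $C\omega_0\int_0^\tau\mathcal{D}_{\rm tem}$ as viscous error, is not justified.

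Relatedly, do not insert $1+\varphi$ into the multiplier. With the plain multiplier $\ptau^{j+1}\varphi\,w^3z^4$, the prefactor $(\Flm)^{-1}$ of $V_{w,\alpha}$ combines into $\pz(\ptau^{j+1}\varphi/(\Flm))$. Integration by parts then yields the exact square in $X_{j+1}$ with no remainder. With the extra $1+\varphi$ you instead get $z(\Flm)\ptau^{j+1}\pzphi=X_{j+1}+\ptau^{j+1}\varphi\,z\pzphi$. The resulting cross term requires $\lamt^{3-3\alpha}\intol(\ptau\varphi)^2w^{3\alpha}z^2dz$ for $j=0$, or $\lamt\intol(\ptau^2\varphi)^2w^{3\alpha}z^2dz$ for $j=1$. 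Neither is controlled by $\mathcal{D}_{\rm tem}$ when $\alpha<\frac23$, which is exactly why $V_{w,\alpha}$ must be kept whole.

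Two minor slips:
\begin{itemize}
\item The $j=1$ inertial damping coefficient is $\frac{5-3\alpha}{2}$, because the weight derivative adds rather than subtracts.
\item The $j=0$ bound on $\mathcal{N}_1$ must read $\omega_0\lamt^{-1/2}\mathcal{E}_{\rm tem}+\omega_0\mathcal{D}_{\rm tem}$, not $\omega_0(\mathcal{E}_{\rm tem}+\mathcal{D}_{\rm tem})$, to be integrable in time.
\end{itemize}
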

\begin{proof}
We only present the estimate of the highest order temporal derivative, since the low order estimate can be proceeded in a similar way. The low-order version of the temporal energy estimate is as follows
\begin{equation}\label{Eq: low order temporal est}
    \begin{aligned}
         &\intol\frac{1}{2}\lamt(\ptau\varphi)^2w^3z^4+\frac{2}{3}(z\pz\varphi)^2w^4z^2dz+\frac{3}{2}\delta\varphi^2w^3z^4dz\\
         &+\int_0^\tau\left(\lamt'\intol\frac{1}{2}(\ps\varphi)^2w^3z^4dz+\lamt^{3-3\alpha}\intol\frac{4}{3}\mu\left(w^3J^{-1}\right)^{\alpha}\frac{((1+\varphi)z\ps\pz\varphi-\ps\varphi\cdot z\pz\varphi)^2}{1+\varphi+z\pz\varphi}z^2dz\right)ds\\
         \le&C\mathcal{E}_{\rm tem}(0).
    \end{aligned}
    \end{equation}

We now proceed the high order temporal energy estimate. Apply $\ptau$ to (\ref{phiEqn: linearized, G}) to get
\begin{equation}\label{Eq: partial_t of the equation}
    \begin{aligned}
\lamt\ptau^3\varphi+2\lamt'\ptau^2\varphi+\lamt''\ptau\varphi+3\delta\ptau\varphi+\mathcal{L}_{w^4}\ptau\varphi+\ptau\mathcal{N}_1[\varphi]=\ptau(\lamt^{3-3\alpha}V_{w,\alpha}[\varphi]).
    \end{aligned}
\end{equation}
Multiply the resulting equation by $\lamt^{3\alpha-2}\ptau^2\varphi w^3z^4$, integrate the resulting equation over $[0,1]$ and integrate by parts to obtain
\begin{equation}\label{Eq: pt^2 of the eq, 1}
    \begin{aligned}
        &\frac{d}{d\tau}\left(\lamt^{3\alpha-2}\intol\frac{1}{2}\lamt(\ptau^2\varphi)^2w^3z^4+\frac{2}{3}(z\ptau\pzphi)^2w^4z^2+\frac{3}{2}\delta(\ptauphi)^2w^3z^4dz\right)\\
        &+\frac{5-3\alpha}{2}\lamt^{3\alpha-2}\intol\lamt'(\ptau^2\varphi)^2w^3z^4dz\\
        &+\underbrace{\intol\lamt''\lamt^{3\alpha-2}\ptauphi\cdot\ptau^2\varphi w^3z^4dz}_{\star_1}+\underbrace{(2-3\alpha)\lamt^{3\alpha-3}\lamt'\intol\frac{2}{3}(z\ptauzphi)^2w^4z^2+\frac{3}{2}\delta(\ptauphi)^2w^3z^4dz}_{\star_2}\\
        &+\underbrace{\lamt^{3\alpha-2}\intol\ptau\mathcal{N}_1[\varphi]\ptau^2\varphi\cdot w^3z^4dz}_{\star_3}\\
        =&\underbrace{\intol\lamt^{3\alpha-2}\ptau(\lamt^{3-3\alpha}V_{w,\alpha}[\varphi])\ptau^2\varphi w^3z^4dz}_{\star_4}.
    \end{aligned}
\end{equation}
We first estimate the commutator terms, $(\star_1)$ and $(\star_2)$, as follows
\begin{equation}
    (\star_1)\ge-\frac{5-3\alpha}{4}\lamt^{3\alpha-2}\intol\lamt'(\ptau^2\varphi)^2w^3z^4dz-C\intol\lamt^{3\alpha-1}(\ptauphi)^2w^3z^4dz,
\end{equation}
\begin{equation}
\begin{aligned}
    (\star_2)\ge&-C\lamt^{3\alpha-3}\intol\lamt'(\ptauphi)^2w^3z^4dz\\
    &-C\lamt^{3\alpha-3}\intol\lamt^{3-3\alpha}\frac{4}{3}\mu\left(w^3J^{-1}\right)^{\alpha}\frac{[(1+\varphi)z\ptau\pz\varphi-\ptau\varphi\cdot z\pz\varphi]^2}{1+\varphi+z\pz\varphi}z^2dz,
\end{aligned}
\end{equation}
where we make use of the fact $0<\alpha\le\frac{2}{3}$ to deal with the temporal weight.

The next step is to estimate the nonlinear terms $\mathcal{N}_1[\varphi]$ and $V_{w,\alpha}[\varphi]$.

{\RaggedRight\underline{\textbf{Estimate of $\mathcal{N}_1[\varphi]$}:}} Recalling the definition of $\mathcal{N}_1[\varphi]$ from (\ref{Eq: nonlinearity, N1}), by direct computation, we have
\begin{equation}\label{kth-derivative of phiEqnDt: Linearization, Nonlinearity 1}
    \begin{aligned}
       (\star_3)
        =&\lamt^{3\alpha-2}\intol\delta\ptau\left(1-2\varphi-\frac{1}{(\Flm)^2}\right)\cdot\ptau^{2}\varphi\cdot w^3z^4\\
        &\ \ \ \ \ \ \ -\frac{4}{3}\ptau\left[\left((\Flm)^2-1\right)\pz\left(\frac{w^4}{z^2}\pz\left(z^3\varphi\right)\right)\right]\ptau^{2}\varphi\cdot z^3\\
        &\ \ \ \ \ \ \ -\frac{4}{3}\ptau\left[(\Flm)^2\pz\left(\frac{w^4}{z^2}\pz\left(z^3(\varphi^2+\frac{1}{3}\varphi^3)\right)\right)\right]\ptau^{2}\varphi\cdot z^3\\
        &\ \ \ \ \ \ \ +\ptau\left[(\Flm)^2\pz\left(\frac{28}{9}w^4(J-1)^2\intol(1-\theta)[1+\theta(J-1)]^{-\frac{10}{3}}d\theta\right)\right]\ptau^{2}\varphi\cdot z^3\\
        &\ \ \ \ \ \ \ +\pz w^4\ptau\left((\Flm)^2-\frac{1}{(\Flm)^2}-4\varphi\right)\ptau^{2}\varphi\cdot z^3dz\\
        =&\lamt^{3\alpha-2}\sum_{i=1}^5I_i.
    \end{aligned}
\end{equation}
The terms $I_1,\ I_2,\ I_3$ can be estimated as follows
\begin{equation}\label{Eq: I_1, temporal est}
    \begin{aligned}
     I_1=&\intol\delta \ptau\left(\frac{\varphi^2(-2\varphi-3)}{(\Flm)^2}\right)\cdot\ptau^{2}\varphi\cdot w^3z^4dz\\
     \le&C\omega_1\intol\lamt^{\frac{1}{2}}(\tau)(\ptau^{2}\varphi)^2w^3z^4dz+C\omega_1\sum_{j=0}^1\intol\lamt^{-\frac{1}{2}}(\tau)(\ptau^j\varphi)^2w^3z^4dz,
    \end{aligned}
\end{equation}
\begin{equation}
    \begin{aligned}
        I_2
        =&\intol\frac{4}{3}\ptau\left[(2\varphi+\varphi^2)(3\varphi+z\pz\varphi)\right](3\ptau^{2}\varphi+z\ptau^{2}\pz\varphi)w^4z^2dz\\
        &+\intol \frac{4}{3}\ptau\left[\left(2z\pz\varphi+2\varphi \cdot z\pz\varphi\right)\cdot(3\varphi+z\pz\varphi)\right]\ptau^{2}\varphi\cdot w^4z^2dz\\
        \le&C\omega_1\intol\lamt^{\frac{1}{2}}(\tau)\left((\ptau^{2}\varphi)^2+(z\ptau^{2}\pz\varphi)^2\right)w^4z^2 dz\\
        &+C\omega_1\sum_{j=0}^1\intol\lamt^{-\frac{1}{2}}(\tau)\left((\ptau^j\varphi)^2+(z\ptau^j\pz\varphi)^2\right)w^4z^2dz,
    \end{aligned}
\end{equation}
\begin{equation}
    \begin{aligned}
        I_3
        =&\intol\frac{4}{3}\ptau\left[(\Flm)^2\left(3(\varphi^2+\frac{1}{3}\varphi^3)+(2\varphi+\varphi^2)z\pz\varphi\right)\right](3\ptau^{2}\varphi+z\ptau^{2}\pz\varphi)w^4z^2dz\\
        &+\intol\frac{4}{3}\ptau\left[(2z\pz\varphi+2\varphi\cdot z\pz\varphi)\left(3(\varphi^2+\frac{1}{3}\varphi^3)+(2\varphi+\varphi^2)z\pz\varphi\right)\right]\ptau^{2}\varphi\cdot w^4z^2dz\\
        \le&C\omega_1\intol\lamt^{\frac{1}{2}}(\tau)\left((\ptau^{2}\varphi)^2+(z\ptau^{2}\pz\varphi)^2\right)w^4z^2 dz\\
        &+C\omega_1\sum_{j=0}^1\intol \lamt^{-\frac{1}{2}}(\tau)\left((\ptau^{j}\varphi)^2+(z\ptau^j\pz\varphi)^2\right)w^4z^2dz.
    \end{aligned}
\end{equation}
For $I_4$, notice that $J-1=p(\varphi,z\pz\varphi)=3\varphi+z\pzphi+h.o.t.$ is just a polynomial of $\varphi$ and $z\pz\varphi$, we thus have
\begin{equation}
    \begin{aligned} I_4
        =&-\intol\ptau\left[(\Flm)^2\frac{28}{9}(J-1)^2\intol(1-\theta)[1+\theta(J-1)]^{-\frac{10}{3}}d\theta\right](3\ptau^{2}\varphi+z\ptau^{2}\pz\varphi)w^4z^2dz\\
        &-\intol\ptau\left[2(\Flm) \cdot z\pz\varphi\left(\frac{28}{9}(J-1)^2\intol(1-\theta)[1+\theta(J-1)]^{-\frac{10}{3}}d\theta\right)\right]\ptau^{2}\varphi\cdot w^4z^2dz\\
        \le&C\omega_1\intol\lamt^{\frac{1}{2}}(\tau)((\ptau^{2}\varphi)^2+(z\ptau^{2}\pz\varphi)^2)w^4z^2dz\\
        &+C\omega_1\sum_{j=0}^1\intol\lamt^{-\frac{1}{2}}(\tau)((\ptau^j\varphi)^2+(z\ptau^j\pz\varphi)^2) w^4z^2dz.\\
    \end{aligned}
\end{equation}
$I_5$ can be estimated as follows
\begin{equation}\label{Eq: I_5, temporal est}
    \begin{aligned}
        I_5=&\intotl\pz w^4\ptau\left((\Flm)^2-\frac{1}{(\Flm)^2}-4\varphi\right)\ptau^{2}\varphi\cdot z^3dz\\
        =&-\intotl w^4\ptau\pz\left((\Flm)^2-\frac{1}{(\Flm)^2}-4\varphi\right)\ptau^{2}\varphi\cdot z^3dz\\
        &-\intotl w^4\ptau\left((\Flm)^2-\frac{1}{(\Flm)^2}-4\varphi\right)(3\ptau^{2}\varphi+z\ptau^{2}\pz\varphi)z^2dz\\
        \le&C\omega_1\intotl\lamt^{\frac{1}{2}}(\tau)((\ptau^{2}\varphi)^2+(z\ptau^{2}\pz\varphi)^2)w^4z^2dzds\\
        &+C\omega_1\sum_{j=0}^1\intotl\lamt^{-\frac{1}{2}}(\tau)((\ptau^j\varphi)^2+(z\ptau^j\pz\varphi)^2)w^4z^2dzds.
    \end{aligned}
\end{equation}
Collecting (\ref{Eq: I_1, temporal est})-(\ref{Eq: I_5, temporal est}), we have
\begin{equation}\label{nonlinearity 1, final}
    \begin{aligned}
        (\star_3)\le&C\omega_1\lamt^{3\alpha-2}\intol\lamt^{\frac{1}{2}}(\tau)(\ptau^{2}\varphi)^2w^3z^4dz+C\omega_1\lamt^{3\alpha-2}\sum_{j=0}^1\intol\lamt^{-\frac{1}{2}}(\tau)(\ptau^j\varphi)^2w^3z^4dz\\
        &+C\omega_1\lamt^{3\alpha-2}\intol\lamt^{\frac{1}{2}}(\tau)((\ptau^{2}\varphi)^2+(z\ptau^{2}\pz\varphi)^2)w^4z^2dz\\
        &+C\omega_1\lamt^{3\alpha-2}\sum_{j=0}^1\intol\lamt^{-\frac{1}{2}}(\tau)((\ptau^j\varphi)^2+(z\ptau^j\pz\varphi)^2)w^4z^2dz\\
        \le&C\omega_1\lamt^{3\alpha-2}\intol\lamt^{\frac{1}{2}}(\tau)(\ptau^{2}\varphi)^2w^3z^4dz+C\omega_1\lamt^{3\alpha-2}\sum_{j=0}^1\intol\lamt^{-\frac{1}{2}}(\tau)(\ptau^j\varphi)^2w^3z^4dz\\
        &+C\omega_1\lamt^{3\alpha-2}\intol\lamt^{\frac{1}{2}}(\tau)(z\ptau^{2}\pz\varphi)^2w^4z^2dz+C\omega_1\lamt^{3\alpha-2}\sum_{j=0}^1\intol\lamt^{-\frac{1}{2}}(\tau)(z\ptau^j\pz\varphi)^2w^4z^2dz\\
        \le&C\omega_1\lamt^{-\frac{1}{2}}(\tau)\mathcal{E}_{\rm tem}(\tau)ds+C\omega_1\lamt^{-\frac{1}{2}}(\tau)\mathcal{D}_{\rm tem}(\tau)ds,
    \end{aligned}
\end{equation}
where we apply Hardy's inequality (\ref{hardy 2}) in the second "$\le$".

{\RaggedRight\underline{\textbf{Estimate of $V_{w,\alpha}[\varphi]$}:}} For the right-hand side of (\ref{Eq: pt^2 of the eq, 1}), we have
\begin{equation}
\begin{aligned}\label{Eq: ptG term, 1}
    (\star_4)=&\intol\lamt\ptau V_{w,\alpha}[\varphi]\ptau^2\varphi w^3z^4dz+\intol(3-3\alpha)\lamt' V_{w,\alpha}[\varphi]\ptau^2\varphi w^3z^4dz.
\end{aligned}
\end{equation}
For the first term on the right-hand side of (\ref{Eq: ptG term, 1}), by direct computation, we have
\begin{equation}
    \begin{aligned}
        &\intol\lamt\ptau V_{w,\alpha}[\varphi]\ptau^2\varphi w^3z^4dz\\
        =&\intol\lamt\frac{\ptau^2\varphi}{\Flm}\pz\left(\frac{4}{3}\mu z^3(\Flm)^2(w^3J^{-1})^\alpha\frac{(\Flm)z\ptau^2\pzphi-\ptau^2\varphi\cdot z\pzphi}{\Flm+z\pzphi}\right) dz\\
        &+\intol\lamt\frac{\ptau^2\varphi}{\Flm}\pz\left(\frac{4}{3}\mu w^{3\alpha}z^3\ptau\left(\frac{(\Flm)^2J^{-\alpha}}{\Flm+z\pzphi}\right)[(\Flm)z\ptauzphi-\ptauphi\cdot z\pzphi]\right) dz\\
        &+\intol\lamt\frac{-\ptauphi\ptau^2\varphi}{(\Flm)^2}\pz\left(\frac{4}{3}\mu z^3(\Flm)^2(w^3J^{-1})^\alpha\frac{(\Flm)z\ptauzphi-\ptauphi\cdot z\pzphi}{\Flm+z\pzphi}\right)dz\\
        =&I_1+I_2+I_3.
    \end{aligned}
\end{equation}
For $I_1$, we integrate by parts to get
\begin{equation}
    \begin{aligned}
        I_1=&\intol\lamt\frac{\ptau^2\varphi}{\Flm}\pz\left(\frac{4}{3}\mu z^3(\Flm)^2(w^3J^{-1})^\alpha\frac{(\Flm)z\ptau^2\pzphi-\ptau^2\varphi\cdot z\pzphi}{\Flm+z\pzphi}\right) dz\\
        =&-\intol\lamt\frac{4}{3}\mu z^2(w^3J^{-1})^\alpha\frac{[(\Flm)z\ptau^2\pzphi-\ptau^2\varphi\cdot z\pzphi]^2}{\Flm+z\pzphi}dz.
    \end{aligned}
\end{equation}
For $I_2$, we integrate by parts to get
\begin{equation}
    \begin{aligned}
        I_2=&\intol\lamt\frac{\ptau^2\varphi}{\Flm}\pz\left(\frac{4}{3}\mu w^{3\alpha}z^3\ptau\left(\frac{(\Flm)^2J^{-\alpha}}{\Flm+z\pzphi}\right)[(\Flm)z\ptauzphi-\ptauphi\cdot z\pzphi]\right) dz\\
        =&-\intol\lamt\frac{(\Flm)z\ptau^2\pz\varphi-\ptau^2\varphi\cdot z\pzphi}{(\Flm)^2}\frac{4}{3}\mu w^{3\alpha}z^2\ptau\left(\frac{(\Flm)^2J^{-\alpha}}{\Flm+z\pzphi}\right)[(\Flm)z\ptauzphi-\ptauphi\cdot z\pzphi]dz\\
        \le&\omega_1\intol\lamt\frac{4}{3}\mu z^2(w^3J^{-1})^\alpha\frac{[(\Flm)z\ptau^2\pzphi-\ptau^2\varphi\cdot z\pzphi]^2}{\Flm+z\pzphi}dz\\
        &+C\omega_1\intol\lamt\frac{4}{3}\mu z^2(w^3J^{-1})^\alpha\frac{[(\Flm)z\ptau\pzphi-\ptau\varphi\cdot z\pzphi]^2}{\Flm+z\pzphi}dz.
    \end{aligned}
\end{equation}
For $I_3$, we integrate by parts to get
\begin{equation}\label{Eq: example of time weight}
    \begin{aligned}
        I_3=&\intol\lamt\frac{-\ptauphi\ptau^2\varphi}{(\Flm)^2}\pz\left(\frac{4}{3}\mu z^3(\Flm)^2(w^3J^{-1})^\alpha\frac{(\Flm)z\ptauzphi-\ptauphi\cdot z\pzphi}{\Flm+z\pzphi}\right)dz\\
        =&\intol\lamt\pz\left(\frac{\ptau\varphi}{\Flm}\right)\frac{\ptau^2\varphi}{\Flm}\frac{4}{3}\mu z^3(\Flm)^2(w^3J^{-1})^\alpha\frac{(\Flm)z\ptauzphi-\ptauphi\cdot z\pzphi}{\Flm+z\pzphi}dz\\
        &+\intol\lamt\frac{\ptauphi}{\Flm}\pz\left(\frac{\ptau^2\varphi}{\Flm}\right)\frac{4}{3}\mu z^3(\Flm)^2(w^3J^{-1})^\alpha\frac{(\Flm)z\ptauzphi-\ptauphi\cdot z\pzphi}{\Flm+z\pzphi}dz\\
        \le&C\omega_1\intol\lamt\frac{4}{3}\mu z^2(w^3J^{-1})^\alpha\frac{[(\Flm)z\ptau^2\pzphi-\ptau^2\varphi\cdot z\pzphi]^2}{\Flm+z\pzphi}dz+\omega_1\intol\lamt^{3\alpha-1}\left(\frac{\ptau^2\varphi}{\Flm}\right)^2w^{3\alpha}z^2dz\\
        &+C\omega_1\intol\lamt^{3-3\alpha}\frac{4}{3}\mu z^2(w^3J^{-1})^\alpha\frac{[(\Flm)z\ptau\pzphi-\ptau\varphi\cdot z\pzphi]^2}{\Flm+z\pzphi}dz.
    \end{aligned}
\end{equation}
By applying Hardy's inequality twice, we have
\begin{equation}
\begin{aligned}
    &\intol\lamt^{3\alpha-1}\left(\frac{\ptau^2\varphi}{\Flm}\right)^2w^{3\alpha}z^2dz\\
    \lesssim&\intol\lamt^{3\alpha-1}\left[\left(\frac{\ptau^2\varphi}{\Flm}\right)^2+\left(\frac{(\Flm)\ptau^2\pz\varphi-\ptau^2\varphi\cdot\pzphi}{(\Flm)^2}\right)^2\right]w^{3\alpha+2}z^4dz\\
    \lesssim&\intol\lamt^{3\alpha-1}(\ptau^2\varphi)^2w^3z^4+\lamt^{3\alpha-1}\frac{[(\Flm)\ptau^2\pz\varphi-\ptau^2\varphi\cdot\pzphi]^2}{\Flm+z\pzphi}w^{3\alpha+2}z^4dz.
\end{aligned}
\end{equation}
Since $0<\alpha\le\frac{2}{3}$ implies $3\alpha-1\le1$, we have
\begin{equation}
\begin{aligned}
     I_3\le& C\omega_1\intol\lamt^{3\alpha-1}(\ptau^2\varphi)^2w^3z^4dz+C\omega_1\intol\lamt\frac{4}{3}\mu z^2(w^3J^{-1})^\alpha\frac{[(\Flm)z\ptau^2\pzphi-\ptau^2\varphi\cdot z\pzphi]^2}{\Flm+z\pzphi}dz\\
     &+C\omega_1\intol\lamt^{3-3\alpha}\frac{4}{3}\mu z^2(w^3J^{-1})^\alpha\frac{[(\Flm)z\ptau\pzphi-\ptau\varphi\cdot z\pzphi]^2}{\Flm+z\pzphi}dz.
\end{aligned}
\end{equation}
As a result, the first term on the right-hand side of (\ref{Eq: ptG term, 1}) can be controlled as
\begin{equation}
    \begin{aligned}
        &\intol\lamt\ptau V_{w,\alpha}[\varphi]\ptau^2\varphi w^3z^4dz\\
        \le&-(1-C\omega_1)\intol\lamt\frac{4}{3}\mu z^2(w^3J^{-1})^\alpha\frac{[(\Flm)z\ptau^2\pzphi-\ptau^2\varphi\cdot z\pzphi]^2}{\Flm+z\pzphi}dz\\
        &+C\omega_1\intol\lamt^{3\alpha-1}(\ptau^2\varphi)^2w^3z^4dz+C\omega_1\intol\lamt^{3-3\alpha}\frac{4}{3}\mu z^2(w^3J^{-1})^\alpha\frac{[(\Flm)z\ptau\pzphi-\ptau\varphi\cdot z\pzphi]^2}{\Flm+z\pzphi}dz.
    \end{aligned}
\end{equation}
While for the second term on the right-hand side of (\ref{Eq: ptG term, 1}), we have
\begin{equation}
    \begin{aligned}
        &\intol(3-3\alpha)\lamt' V_{w,\alpha}[\varphi]\ptau^2\varphi w^3z^4dz\\
        =&\intol(3-3\alpha)\lamt'\frac{\ptau^2\varphi}{\Flm}\pz\left(\frac{4}{3}\mu z^3(\Flm)^2(w^3J^{-1})^\alpha\frac{(\Flm)z\ptauzphi-\ptauphi\cdot z\pzphi}{\Flm+z\pzphi}\right)dz\\
                =&-\intol(3-3\alpha)\lamt'\pz\left(\frac{\ptau^2\varphi}{\Flm}\right)\frac{4}{3}\mu z^3(\Flm)^2(w^3J^{-1})^\alpha\frac{(\Flm)z\ptauzphi-\ptauphi\cdot z\pzphi}{\Flm+z\pzphi}dz\\
                \le&\frac{1}{6}\intol\lamt\mu z^2(w^3J^{-1})^\alpha\frac{[(\Flm)z\ptau^2\pzphi-\ptau^2\varphi\cdot z\pzphi]^2}{\Flm+z\pzphi}dz\\
     &+C\intol\lamt\frac{4}{3}\mu z^2(w^3J^{-1})^\alpha\frac{[(\Flm)z\ptau\pzphi-\ptau\varphi\cdot z\pzphi]^2}{\Flm+z\pzphi}dz.
    \end{aligned}
\end{equation}
So (\ref{Eq: ptG term, 1}) can be estimated as
\begin{equation}
    \begin{aligned}
        (\star_4)\le&-\intol\lamt\mu z^2(w^3J^{-1})^\alpha\frac{[(\Flm)z\ptau^2\pzphi-\ptau^2\varphi\cdot z\pzphi]^2}{\Flm+z\pzphi}dz\\
        &+C\intol\lamt^{3-3\alpha}\frac{4}{3}\mu z^2(w^3J^{-1})^\alpha\frac{[(\Flm)z\ptau\pzphi-\ptau\varphi\cdot z\pzphi]^2}{\Flm+z\pzphi}dz\\
        &+C\omega_1\intol\lamt^{3\alpha-1}(\ptau^2\varphi)^2w^3z^4dz.
    \end{aligned}
\end{equation}

Combining all the ingredients, we obtain
\begin{equation}\label{Eq: pt^2 of the eq, 3}
    \begin{aligned}
        &\frac{d}{d\tau}\left(\lamt^{3\alpha-2}\intol\frac{1}{2}\lamt(\ptau^2\varphi)^2w^3z^4+\frac{2}{3}(z\ptau\pzphi)^2w^4z^2+\frac{3}{2}\delta(\ptauphi)^2w^3z^4dz\right)\\
        &+\frac{5-3\alpha}{8}\lamt^{3\alpha-2}\intol\lamt'(\ptau^2\varphi)^2w^3z^4dz+\intol\lamt\mu z^2(w^3J^{-1})^\alpha\frac{[(\Flm)z\ptau^2\pzphi-\ptau^2\varphi\cdot z\pzphi]^2}{\Flm+z\pzphi}dz\\
        \le&C\intol\lamt^{3-3\alpha}\frac{4}{3}\mu z^2(w^3J^{-1})^\alpha\frac{[(\Flm)z\ptau\pzphi-\ptau\varphi\cdot z\pzphi]^2}{\Flm+z\pzphi}dz+C\lamt^{3\alpha-1}\intol(\ptauphi)^2w^3z^4dz\\
        &+C\omega_1\lamt^{-\frac{1}{2}}\mathcal{E}_{\rm tem}(\tau)ds+C\omega_1\lamt^{-\frac{1}{2}}\mathcal{D}_{\rm tem}(\tau)ds.
    \end{aligned}
\end{equation}
Integrate (\ref{Eq: pt^2 of the eq, 3}) in temporal variable over $[0,\tau]$, combined with (\ref{Eq: low order temporal est}), we obtain
\begin{equation}
    \tilde{\mathcal{E}}_{\rm tem}(\tau)+\int_0^\tau\mathcal{D}_{\rm tem}(s)ds\le C\mathcal{E}_{\rm tem}(0)+C\int_0^\tau\lamt^{-\frac{1}{2}}\mathcal{E}_{\rm tem}(s)ds.
\end{equation}
Combined with Lemma \ref{Lemma: Coercivity of temporal energy}, choosing $\sigma>0$ small enough, we obtain
\begin{equation}
    \mathcal{E}_{\rm tem}(\tau)+\int_0^\tau\mathcal{D}_{\rm tem}(s)ds\le C\mathcal{E}_{\rm tem}(0)+C\int_0^\tau\lamt^{-\min\{\frac{3}{2}\alpha,\frac{1}{2}\}}(s)\mathcal{E}_{\rm tem}(s)+\lamt^{-3\alpha}(s)\mathcal{D}_{\rm spt}(s)ds.
\end{equation}
\end{proof}
\section{Interior estimates}
In this section, we establish the interior estimates. We recall that the smooth cutoff function $\hat{\chi}(z)$ is defined as in (\ref{Eq: Cutoff}). For convenience, we introduce
\begin{equation}
    \begin{aligned}
        \tilde{\mathcal{E}}_{\rm int}(\tau)=&\frac{1}{2}\left(\lamt^{3\alpha-2}\intol\hat{\chi}\lamt(\ptau\varphi)^2w^3z^2+3\delta\hat{\chi}\varphi^2w^3z^2+\frac{4}{3}\hat{\chi}(z\pz\varphi)^2w^4dz\right)\\
        &+\frac{1}{2}\left(\lamt^{6\alpha-4}\intol\hat{\chi}\lamt(\ptau^{2}\varphi)^2w^3z^2+3\delta\hat{\chi}(\ptau\varphi)^2w^3z^2+\frac{4}{3}\hat{\chi}(z\ptau\pz\varphi)^2w^4dz\right)
    \end{aligned}
\end{equation}
\begin{lem}\label{lemma: interior est}Let $-\tilde\varepsilon<\delta$ for $\tilde\varepsilon>0$ small enough. Suppose (\ref{a priori assumptions})-(\ref{a priori assumptions 2}) holds for small constant $\omega_0$. Then
\begin{equation}\label{interior energy estimate}
    \begin{aligned}
        \mathcal{E}_{\rm int}(\tau)+\int_0^\tau\mathcal{D}_{\rm int}(s)ds\le& C\mathcal{E}_{\rm int}(0)+C\mathcal{E}_{\rm tem}(0)\\&+C\int_0^\tau\left(\lamt^{-\frac{1}{2}}\mathcal{E}_{\rm int}(s)+\lamt^{-\min\{\frac{3}{2}\alpha,\frac{1}{2}\}}\mathcal{E}_{\rm tem}(s)+\lamt^{-3\alpha}\mathcal{D}_{\rm spt}(s)\right)ds.
    \end{aligned}
\end{equation}
\end{lem}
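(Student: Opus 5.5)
We will mimic the proof of Lemma \ref{Lemma: Est2}, now with the cutoff $\hat{\chi}$ and the weight $w^3z^2$ in place of $w^3z^4$. For $j=0,1$ we apply $\ptau^j$ to (\ref{phiEqn: linearized, G}) and test against $\lamt^{(j+1)(3\alpha-2)}\hat{\chi}\,\ptau^{j+1}\varphi\, w^3z^{-2}\cdot z^4$, i.e. against $\lamt^{(j+1)(3\alpha-2)}\hat{\chi}\ptau^{j+1}\varphi\, w^3z^2$, and integrate over $[0,1]$. Since $\hat{\chi}$ vanishes on $[\frac34,1]$, no boundary terms arise at $z=1$, and $w\sim 1$ on the support of $\hat\chi$. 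The time-derivative terms give $\frac{d}{d\tau}\tilde{\mathcal{E}}_{\rm int}$, up to commutator terms from $\ptau\lamt^{(j+1)(3\alpha-2)}$ and from $\lamt'',\lamt'$. These are handled as $(\star_1),(\star_2)$ in Lemma \ref{Lemma: Est2}, using $b_0\le\lamt'/\lamt,\lamt''/\lamt'\le b_1$ and $3\alpha-2\le0$. The damping $\lamt'\ptau^{j+1}\varphi$ supplies a positive term $\lamt^{(j+1)(3\alpha-2)}\lamt'\int\hat\chi(\ptau^{j+1}\varphi)^2w^3z^2$.

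For the elliptic part, $\mathcal{L}_{w^4}$ tested against $\hat\chi\,\ptau^{j+1}\varphi\, w^3z^2$ gives $\frac{d}{d\tau}\frac{2}{3}\int\hat\chi(z\ptau^j\pz\varphi)^2w^4$ plus the commutator $\int\hat\chi'\,\cdots$ and an extra term from $z^{-2}$ against $z^4$, namely $\int\hat\chi\,\ptau^j\pz\varphi\,\ptau^{j+1}\varphi\,w^4 z\,dz$. These commutators live where $\hat\chi'\ne0$ (there $z\sim1$, $w\sim1$), or are lower order near the origin. We bound them by Cauchy–Schwarz by $\epsilon\mathcal{D}_{\rm int}$ plus terms controlled by $\mathcal{E}_{\rm tem},\mathcal{D}_{\rm tem}$. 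The weight $z^{-2}$ near $0$ is handled with Hardy (\ref{hardy 1}) in $z$ with power $k=4>1$, which converts $z^{2}$-weighted norms of $\ptau\varphi$ into $z^4$-weighted norms of $\ptau\varphi,\ptau\pz\varphi$, i.e. temporal quantities.

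The viscosity term $\lamt^{3-3\alpha}V_{w,\alpha}$ is, as the paper stresses, kept whole. Integrating by parts as in $I_1$ of the proof of Lemma \ref{Lemma: Est2} produces $-\lamt^{(j+1)(3\alpha-2)+3-3\alpha}\int\hat\chi\frac43\mu z^{-2}\cdots[(1+\varphi)z\ptau^{j+1}\pz\varphi-\ptau^{j+1}\varphi z\pz\varphi]^2$. The resulting temporal weights are $\lamt^{1}$ for $j=0$ and $\lamt^{3\alpha-1}$ for $j=1$, matching $\mathcal{D}_{\rm int}$. Because the weight is now $z^0$ rather than $z^2$, this square controls $\int\hat\chi(z\ptau^{j+1}\pz\varphi)^2w^{3\alpha}$. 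The zeroth-order part $\int\hat\chi(\ptau^{j+1}\varphi)^2w^{3\alpha}$ in $\mathcal{D}_{\rm int}$ is recovered from it together with the damping term and Hardy near $z=0$, as in Lemma \ref{Lemma: Coercivity of temporal dissipation}. The leftover pieces come from $\ptau$ hitting the coefficients, $\lamt'$ factors, and $\hat\chi'$ commutators. They are absorbed by $\omega_1$-smallness or bounded by $\int\mathcal{D}_{\rm tem}$, exactly as $I_2,I_3$ and the $(3-3\alpha)\lamt'$ term before. The nonlinearity $\mathcal{N}_1$ is estimated as in (\ref{nonlinearity 1, final}), giving $C\omega_1\lamt^{-1/2}(\mathcal{E}_{\rm int}+\mathcal{E}_{\rm tem}+\mathcal{D}_{\rm int}+\mathcal{D}_{\rm tem})$.

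Integrating in $\tau$ and using Lemma \ref{Lemma: Est2} to bound $\int_0^\tau\mathcal{D}_{\rm tem}$ and $\mathcal{E}_{\rm tem}$ gives an inequality for $\tilde{\mathcal{E}}_{\rm int}$ with the right-hand side of (\ref{interior energy estimate}). The main obstacle is coercivity of $\tilde{\mathcal{E}}_{\rm int}$ when $\delta<0$. Here the spectral gap is not available because of the cutoff. However, the negative term is $3\delta\lamt^{(j+1)(3\alpha-2)}\int\hat\chi(\ptau^j\varphi)^2w^3z^2$. By Hardy this is bounded by $C|\delta|\lamt^{j(3\alpha-2)}\int((\ptau^j\varphi)^2w^3z^4+(z\ptau^j\pz\varphi)^2w^4z^2)\le C|\delta|\mathcal{E}_{\rm tem}$, since $\lamt^{3\alpha-2}\le1$. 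That term is already controlled by Lemma \ref{Lemma: Est2}. So we plan simply to move it to the right, without a spectral argument. The positive part $\lamt^{(j+1)(3\alpha-2)}\frac23\int\hat\chi(z\ptau^j\pz\varphi)^2w^4$ then, again via Hardy and the $\lamt\ptau$ terms, controls the remaining piece $\hat\chi(\ptau^j\varphi)^2w^3z^2$ of $\mathcal{E}_{\rm int}$ up to $\mathcal{E}_{\rm tem}$. This yields (\ref{interior energy estimate}).
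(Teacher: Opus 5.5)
There is a genuine gap in how you handle the viscous term.

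With your multiplier $\lamt^{6\alpha-4}\hat{\chi}\,\ptau^2\varphi\,w^3z^2$, the leading viscous contribution is $-\lamt^{3\alpha-1}\int_0^1\hat\chi\,z^{-2}\frac{\ptau^2\varphi}{1+\varphi}\,\pz(\cdots)\,dz$. When you integrate by parts, the $z$-derivative falls on three factors: on $\frac{\ptau^2\varphi}{1+\varphi}$ (your square), on $\hat\chi$ (your commutator), and on $z^{-2}$. The last piece is missing from your list of leftovers. It has no analogue in Lemma~\ref{Lemma: Est2}, where $w^3z^4$ exactly cancels the prefactor $z^{-4}w^{-3}$ of $V_{w,\alpha}$. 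To leading order it is
\[
-\frac{8}{3}\mu\,\lamt^{3\alpha-1}\int_0^1\hat\chi\,w^{3\alpha}\,\ptau^2\varphi\cdot z\ptau^2\pz\varphi\,dz ,
\]
which has the same temporal weight and the same $z^0$ spatial weight as the top-order dissipation. It is not $O(\omega_1)$. It is not controlled by $\mathcal{D}_{\rm tem}$, which near $z=0$ only sees $z^2$-weighted quantities (cf. Lemma~\ref{Lemma: Coercivity of temporal dissipation}). Cauchy--Schwarz plus Hardy cannot absorb it either: the sharp inequality $\int_0^\infty h^2\,dz\le 4\int_0^\infty z^2(h')^2\,dz$ only bounds it by $\frac{16}{3}\mu\lamt^{3\alpha-1}\int_0^1\hat\chi w^{3\alpha}(z\ptau^2\pz\varphi)^2dz$. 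That is four times the good square $\frac{4}{3}\mu\lamt^{3\alpha-1}\int_0^1\hat\chi w^{3\alpha}(z\ptau^2\pz\varphi)^2dz$. So your scheme of recovering $\int\hat\chi(\ptau^{j+1}\varphi)^2w^{3\alpha}$ from the square by Hardy and absorbing everything else does not close. The same problem occurs at $j=0$.

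The missing idea is that this term has a good sign; this is exactly how the paper produces the zero-order part of $\mathcal{D}_{\rm int}$ (its terms $K_2$, $K_{2,1}$). Since $\frac{(1+\varphi)\ptau^2\pz\varphi-\ptau^2\varphi\,\pz\varphi}{(1+\varphi)^2}=\pz\left(\frac{\ptau^2\varphi}{1+\varphi}\right)$, the main part of the term equals $-\frac{4}{3}\mu\lamt^{3\alpha-1}\int_0^1\hat\chi\,w^{3\alpha}z\,\pz\left[\left(\frac{\ptau^2\varphi}{1+\varphi}\right)^2\right]dz$. One more integration by parts turns this into $+\frac{4}{3}\mu\lamt^{3\alpha-1}\int_0^1\hat\chi\,w^{3\alpha}\left(\frac{\ptau^2\varphi}{1+\varphi}\right)^2dz$, plus terms carrying $\hat\chi'$ or $zw'$, which are bounded by $\mathcal{D}_{\rm tem}$. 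The remainder has coefficient $\frac{(1+\varphi)^4J^{-\alpha}}{1+\varphi+z\pz\varphi}-1=O(\omega_1)$ and is absorbed. The zero-order dissipation then comes out directly, and no Hardy recovery is needed.

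Separately, your appeal to the damping term would fail at $j=1$: its weight $\lamt^{6\alpha-3}$ is weaker than $\lamt^{3\alpha-1}$ for $\alpha<\frac23$. The rest of your outline is in line with the paper. That includes the $\hat\chi'$ commutators, the $\mathcal{N}_1$ bound, and the $3\delta$ term via Hardy with $k=4$ and $\lamt^{3\alpha-2}\le 1$, which gives $\tilde{\mathcal{E}}_{\rm int}\ge C_0\mathcal{E}_{\rm int}-C_1\mathcal{E}_{\rm tem}$.
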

\begin{proof}

We only present the highest order interior estimate. The following is the low order estimate
\begin{equation}\label{Eq: low order interior estimate}
\begin{aligned}
    &\lamt^{3\alpha-2}\intol\frac{1}{2}\lamt\hat{\chi}(\ptauphi)^2w^3z^2+\frac{2}{3}\hat{\chi}(z\pzphi)^2w^4+\frac{3}{2}\delta\hat{\chi}\varphi^2w^3z^2dz\\
    &+\frac{1}{3}\int_0^\tau\intol\lamt\mu\hat{\chi}(\psphi)^2w^{3\alpha}+\lamt\mu\hat{\chi}(z\ps\pzphi)^2w^{3\alpha}dzds\\
    \le&C\mathcal{E}_{\rm int}(0)+C\int_0^\tau\left(\lamt^{-\frac{1}{2}}\mathcal{E}_{\rm int}(s)+\lamt^{3\alpha-2-\frac{1}{2}}\mathcal{E}_{\rm tem}(s)+\mathcal{D}_{\rm tem}(s)\right)ds,
\end{aligned}
\end{equation}
which can be proceeded in a similar way as the high order estimate. 

We apply $\ptau$ to (\ref{phiEqn: linearized, G}) to get
\begin{equation}
    \begin{aligned}
\lamt\ptau^3\varphi+2\lamt'\ptau^2\varphi+\lamt''\ptau\varphi+3\delta\ptau\varphi+\mathcal{L}_{w^4}\ptau\varphi+\ptau\mathcal{N}_1[\varphi]=\ptau(\lamt^{3-3\alpha}V_{w,\alpha}[\varphi]).
    \end{aligned}
\end{equation}
Multiply it by $\hat{\chi}\lamt^{6\alpha-4}\ptau^2\varphi w^3z^2$, integrate it over $[0,1]$ and integrate by parts to obtain
\begin{equation}\label{Eq: HighOrderInteriorEst, diff}
    \begin{aligned}
        &\frac{d}{d\tau}\left(\lamt^{6\alpha-4}\intol\frac{1}{2}\lamt\hat{\chi}(\ptau^2\varphi)w^3z^2+\frac{2}{3}\hat{\chi}(z\ptauzphi)^2w^4+\frac{3}{2}\delta\hat{\chi}(\ptauphi)^2w^3z^2dz\right)\\
        &+\intol(\frac{7}{2}-3\alpha)\lamt^{6\alpha-4}\lamt'\hat{\chi}(\ptau^2\varphi)^2w^3z^2+\frac{2}{3}(4-6\alpha)\lamt^{6\alpha-5}\lamt'(z\ptauzphi)^2w^4\\
        &\ \ \ \ \ \ \ \ \ \ +\frac{3}{2}\delta(4-6\alpha)\lamt^{6\alpha-5}\lamt'\hat{\chi}(\ptauphi)^2w^3z^2dz\\
        &+\underbrace{\intol\lamt^{6\alpha-4}\lamt''\hat{\chi}\ptauphi\ptau^2\varphi\cdot w^3z^2+\frac{4}{3}\lamt^{6\alpha-4}\hat{\chi}'\ptau^2\varphi\cdot z\ptauzphi w^4z-\frac{8}{3}\lamt^{6\alpha-4}\hat{\chi}\ptau^2\varphi\cdot z\ptauzphi w^4dz}_{\square_1}\\
        &+\underbrace{\intol\lamt^{6\alpha-4}\ptau\mathcal{N}_1[\varphi]\hat{\chi}\ptau^2\varphi w^3z^2dz}_{\square_2}\underbrace{-\intol\lamt^{6\alpha-4}\hat{\chi}\ptau(\lamt^{3-3\alpha}V_{w,\alpha}[\varphi])\ptau^2\varphi w^3z^2dz}_{\square_3}=0.
    \end{aligned}
\end{equation}

For the commutator term $(\square_1)$, we estimate it as
\begin{equation}\label{Eq: InteriorEst, square1}
    \begin{aligned}
        (\square_1)\ge&-\sigma\intol\lamt^{12\alpha-7}\mu\hat{\chi}(\ptau^2\varphi)^2w^3z^2dz-C_\sigma\intol\lamt\mu\hat{\chi}(\ptauphi)^2w^{3\alpha}+\lamt\mu\hat{\chi}(z\ptauzphi)^2w^{3\alpha}dz\\
        &-C\lamt^{-\frac{1}{2}}\mathcal{E}_{\rm tem}(\tau).\\
    \end{aligned}
\end{equation}

The next goal is to estimate the nonlinear terms coming from $\mathcal{N}_1[\varphi]$ and $V_{w,\alpha}[\varphi]$.

{\RaggedRight\underline{\textbf{Estimate of $\mathcal{N}_1[\varphi]$}:}} Similar to the proof of (\ref{nonlinearity 1, final}), we have
\begin{equation}\label{Eq: interior, ptau N1}
    \begin{aligned}
       (\square_2)
        =&\lamt^{6\alpha-4}\intol\delta\ptau\left(1-2\varphi-\frac{1}{(\Flm)^2}\right)\cdot\hat{\chi}\ptau^2\varphi\cdot w^3z^2\\
        &\ \ \ \ \ \ \ -\frac{4}{3}\ptau\left[\left((\Flm)^2-1\right)\pz\left(\frac{w^4}{z^2}\pz\left(z^3\varphi\right)\right)\right]\hat{\chi}\ptau^2\varphi\cdot z\\
        &\ \ \ \ \ \ \ -\frac{4}{3}\ptau\left[(\Flm)^2\pz\left(\frac{w^4}{z^2}\pz\left(z^3(\varphi^2+\frac{1}{3}\varphi^3)\right)\right)\right]\hat{\chi}\ptau^2\varphi\cdot z\\
        &\ \ \ \ \ \ \ +\ptau\left[(\Flm)^2\pz\left(\frac{28}{9}w^4(J-1)^2\intol(1-\theta)[1+\theta(J-1)]^{-\frac{10}{3}}d\theta\right)\right]\hat{\chi}\ptau^2\varphi\cdot z\\
        &\ \ \ \ \ \ \ +\pz w^4\ptau\left((\Flm)^2-\frac{1}{(\Flm)^2}-4\varphi\right)\hat{\chi}\ptau^2\varphi\cdot zdz\\
        \ge&-C\omega_1\lamt^{3\alpha-2-\frac{1}{2}}\mathcal{E}_{\rm tem}(\tau)-C\omega_1\lamt^{-\frac{1}{2}}\mathcal{E}_{\rm int}(\tau)-C\omega_1\lamt^{9\alpha-8}\intol\hat{\chi}[\lamt(\ptau\varphi)^2+\lamt(z\ptau\pzphi)^2]w^{3\alpha}dz\\
        &-C\omega_1\lamt^{3\alpha-2}\intol\hat{\chi}[\lamt(\ptau^2\varphi)^2+\lamt(z\ptau^2\pzphi)^2]w^{3\alpha}dz.\\
    \end{aligned}
\end{equation}

{\RaggedRight\underline{\textbf{Estimate of $V_{w,\alpha}[\varphi]$}:}} By direct computation, we can write
\begin{equation}
    \begin{aligned}
        (\square_3)=&-\intol(3-3\alpha)\lamt^{3\alpha-2}\lamt'\hat{\chi}V_{w,\alpha}[\varphi]\ptau^2\varphi w^3z^2dz-\intol\hat{\chi}\lamt^{3\alpha-1}\ptau V_{w,\alpha}[\varphi]\ptau^2\varphi w^3z^2dz\\
        =&I_1+I_2.
    \end{aligned}
\end{equation}
We begin with the estimate of $I_2$. By direct computation, we have
\begin{equation}
    \begin{aligned}
        I_2=&-\intol\hat{\chi}\lamt^{3\alpha-1}\ptau V_{w,\alpha}[\varphi]\ptau^2\varphi w^3z^2dz\\
        =&-\intol\lamt^{3\alpha-1}\hat{\chi}\frac{1}{z^2}\frac{\ptau^2\varphi}{\Flm}\pz\left(\frac{4}{3}\mu z^3(\Flm)^2(w^3J^{-1})^\alpha\frac{(\Flm)z\ptau^2\pzphi-\ptau^2\varphi\cdot z\pzphi}{\Flm+z\pzphi}\right) dz\\
        &-\intol\lamt^{3\alpha-1}\hat{\chi}\frac{1}{z^2}\frac{\ptau^2\varphi}{\Flm}\pz\left(\frac{4}{3}\mu w^{3\alpha}z^3\ptau\left(\frac{(\Flm)^2J^{-\alpha}}{\Flm+z\pzphi}\right)[(\Flm)z\ptauzphi-\ptauphi\cdot z\pzphi]\right) dz\\
        &-\intol\lamt^{3\alpha-1}\hat{\chi}\frac{1}{z^2}\frac{-\ptauphi\ptau^2\varphi}{(\Flm)^2}\pz\left(\frac{4}{3}\mu z^3(\Flm)^2(w^3J^{-1})^\alpha\frac{(\Flm)z\ptauzphi-\ptauphi\cdot z\pzphi}{\Flm+z\pzphi}\right)dz\\
        =&I_{2,1}+I_{2,2}+I_{2,3}.
    \end{aligned}
\end{equation}
For the term $I_{2,1}$, we integrate by parts to get
\begin{equation}
    \begin{aligned}
        I_{2,1}=&\underbrace{\intol\lamt^{3\alpha-1}\frac{4}{3}\mu (\Flm)(w^3J^{-1})^\alpha\frac{(\Flm)z\ptau^2\pzphi-\ptau^2\varphi\cdot z\pzphi}{\Flm+z\pzphi}\hat{\chi}'z\ptau^2\varphi dz}_{K_1}\\
        &\underbrace{-\intol\lamt^{3\alpha-1}\frac{8}{3}\mu (\Flm)(w^3J^{-1})^\alpha\frac{(\Flm)z\ptau^2\pzphi-\ptau^2\varphi\cdot z\pzphi}{\Flm+z\pzphi}\hat{\chi}\ptau^2\varphi dz}_{K_2}\\
        &+\intol\lamt^{3\alpha-1}\hat{\chi}\frac{4}{3}\mu (w^3J^{-1})^\alpha\frac{[(\Flm)z\ptau^2\pzphi-\ptau^2\varphi\cdot z\pzphi]^2}{\Flm+z\pzphi} dz.\\
    \end{aligned}
\end{equation}
For $K_1$, we have
\begin{equation}
    \begin{aligned}
        K_1\ge&-C\intol\lamt^{6\alpha-3}(\ptau^2\varphi)^2w^3z^4+\lamt\mu z^2(w^3J^{-1})^\alpha\frac{[(\Flm)z\ptau^2\pzphi-\ptau^2\varphi\cdot z\pzphi]^2}{\Flm+z\pzphi}dz\\
        \ge&-C\mathcal{D}_{\rm tem}(\tau).
    \end{aligned}
\end{equation}
For $K_2$, notice the fact that $\frac{(\Flm)\ptau^2\pzphi-\ptau^2\varphi\cdot \pzphi}{(1+\varphi)^2}=\pz\left(\frac{\ptau^2\varphi}{\Flm}\right)$, we split it as follows
\begin{equation}
\begin{aligned}
    K_2=&-\intol\lamt^{3\alpha-1}\frac{8}{3}\mu \hat{\chi}\frac{(\Flm)\ptau^2\pzphi-\ptau^2\varphi\cdot \pzphi}{(1+\varphi)^2}\frac{\ptau^2\varphi}{1+\varphi}w^{3\alpha}zdz\\
    &-\intol\lamt^{3\alpha-1}\frac{8}{3}\mu \left[\frac{(\Flm)^4J^{-\alpha}}{1+\varphi+z\pzphi}-1\right]\hat{\chi}\frac{(\Flm)z\ptau^2\pzphi-\ptau^2\varphi\cdot z\pzphi}{(1+\varphi)^2}\frac{\ptau^2\varphi}{1+\varphi}w^{3\alpha}dz\\
    =&K_{2,1}+K_{2,2},
\end{aligned}
\end{equation}
where we can estimate $K_{2,1}$ as
\begin{equation}
    \begin{aligned}
        K_{2,1}=&-\intol\lamt^{3\alpha-1}\frac{4}{3}\mu\pz\left(\left(\frac{\ptau^2\varphi}{1+\varphi}\right)^2\right)\hat{\chi}w^{3\alpha}zdz\\
        =&\intol\lamt^{3\alpha-1}\frac{4}{3}\mu\left(\frac{\ptau^2\varphi}{1+\varphi}\right)^2(\hat{\chi}w^{3\alpha}+\hat{\chi}'w^{3\alpha}z+3\alpha\hat{\chi} w^{3\alpha-1}w'z)dz\\
        \ge&\intol\lamt^{3\alpha-1}\frac{7}{6}\mu\hat{\chi}\left(\frac{\ptau^2\varphi}{1+\varphi}\right)^2w^{3\alpha}dz-C\mathcal{D}_{\rm tem}(\tau).
    \end{aligned}
\end{equation}
For $K_{2,2}$, making use of the a priori assumption (\ref{a priori assumptions})-(\ref{a priori assumptions 2}), we have
\begin{equation}
    \begin{aligned}
        K_{2,2}\ge& -C\omega_1\intol\lamt^{3\alpha-1}\mu\hat{\chi}(\ptau^2\varphi)^2w^{3\alpha}dz\\
        &-C\omega_1\intol\lamt^{3\alpha-1}\mu\hat{\chi}[(\Flm)z\ptau^2\pzphi-\ptau^2\varphi\cdot z\pzphi]^2w^{3\alpha}dz.
    \end{aligned}
\end{equation}
As a result, 
\begin{equation}
    \begin{aligned}
        K_2\ge&\intol\lamt^{3\alpha-1}\frac{13}{12}\mu\hat{\chi}(\ptau^2\varphi)^2w^{3\alpha}dz\\
        &-C\omega_1\intol\lamt^{3\alpha-1}\mu\hat{\chi}[(\Flm)z\ptau^2\pzphi-\ptau^2\varphi\cdot z\pzphi]^2w^{3\alpha}dz-C\mathcal{D}_{\rm tem}(\tau).
    \end{aligned}
\end{equation}
And thus
\begin{equation}\label{Eq: InterEst, I_21}
    \begin{aligned}
        I_{2,1}\ge&\intol\lamt^{3\alpha-1}\frac{13}{12}\mu\hat{\chi}(\ptau^2\varphi)^2w^{3\alpha}dz\\
        &+\intol\lamt^{3\alpha-1}\hat{\chi}\frac{7}{6}\mu (w^3J^{-1})^\alpha[(\Flm)z\ptau^2\pzphi-\ptau^2\varphi\cdot z\pzphi]^2 dz-C\mathcal{D}_{\rm tem}(\tau).
    \end{aligned}
\end{equation}
As for $I_{2,2}$, we have
\begin{equation}\label{Eq: InterEst, I_22}
    \begin{aligned}
        I_{2,2}=&\intol\lamt^{3\alpha-1}\hat{\chi}'\frac{z\ptau^2\varphi}{\Flm}\frac{4}{3}\mu w^{3\alpha}\ptau\left(\frac{(\Flm)^2J^{-\alpha}}{\Flm+z\pzphi}\right)[(\Flm)z\ptauzphi-\ptauphi\cdot z\pzphi]dz\\
        &-\intol\lamt^{3\alpha-1}\hat{\chi}\frac{\ptau^2\varphi}{\Flm}\frac{8}{3}\mu w^{3\alpha}\ptau\left(\frac{(\Flm)^2J^{-\alpha}}{\Flm+z\pzphi}\right)[(\Flm)z\ptauzphi-\ptauphi\cdot z\pzphi]dz\\
        &+\intol\lamt^{3\alpha-1}\hat{\chi}\frac{(1+\varphi)z\ptau^2\pz\varphi-\ptau^2\varphi\cdot z\pzphi}{(\Flm)^2}\\
        &\ \ \ \ \ \ \ \ \ \ \ \ \ \ \ \ \ \ \ \ \ \ \ \ \ \ \cdot\frac{4}{3}\mu w^{3\alpha}\ptau\left(\frac{(\Flm)^2J^{-\alpha}}{\Flm+z\pzphi}\right)[(\Flm)z\ptauzphi-\ptauphi\cdot z\pzphi]dz\\
        \ge&-C\omega_1\mathcal{D}_{\rm tem}(\tau)\\
        &-C\omega_1\intol \lamt^{3\alpha-1}\mu\hat{\chi}(\ptau^2\varphi)^2w^{3\alpha}dz-C\omega_1\intol\lamt^{3\alpha-1}\mu\hat{\chi}[(\Flm)z\ptau^2\pzphi-\ptau^2\varphi\cdot z\pzphi]^2w^{3\alpha}dz\\
        &-C\omega_1\intol\lamt^{3\alpha-1}\mu\hat{\chi}[(\Flm)z\ptauzphi-\ptauphi\cdot z\pzphi]^2w^{3\alpha}dz.\\
    \end{aligned}
\end{equation}
For $I_{2,3}$, we have
\begin{equation}\label{Eq: InterEst, I_23}
    \begin{aligned}
        I_{2,3}=&-\intol\lamt^{3\alpha-1}\frac{4}{3}\mu z^3(\Flm)^2(w^3J^{-1})^\alpha\frac{(\Flm)z\ptauzphi-\ptauphi\cdot z\pzphi}{\Flm+z\pzphi}\pz\left(\hat{\chi}\frac{1}{z^2}\frac{\ptauphi\ptau^2\varphi}{(\Flm)^2}\right)\\
         \ge&-C\omega_1\mathcal{D}_{\rm tem}(\tau)\\
         &-C\omega_1\intol \lamt^{3\alpha-1}\mu\hat{\chi}(\ptau^2\varphi)^2w^{3\alpha}dz-C\omega_1\intol \lamt^{3\alpha-1}\mu\hat{\chi}(z\ptau^2\pz\varphi)^2w^{3\alpha}dz\\
        &-C\omega_1\intol\lamt^{3\alpha-1}\mu\hat{\chi}[(\Flm)z\ptauzphi-\ptauphi\cdot z\pzphi]^2w^{3\alpha}dz.
    \end{aligned}
\end{equation}
Collecting (\ref{Eq: InterEst, I_21})-(\ref{Eq: InterEst, I_23}), making use of the fact that
\begin{equation*}
    \intol\mu\hat{\chi}[(\Flm)z\ptau^2\pzphi-\ptau^2\varphi\cdot z\pzphi]^2w^{3\alpha}dz\lesssim\intol\mu\hat{\chi}[(z\ptau^2\pzphi)^2+\omega_1(\ptau^2\varphi)^2]w^{3\alpha}dz,
\end{equation*}
we arrive at
\begin{equation}\label{Eq: box 3, I2, final}
    \begin{aligned}
        I_2\ge&\intol\lamt^{3\alpha-1}\mu\hat{\chi}(\ptau^2\varphi)^2w^{3\alpha}dz+\intol \frac{1}{2}\lamt^{3\alpha-1}\mu\hat{\chi}(z\ptau^2\pz\varphi)^2w^{3\alpha}dz\\
        &-C\omega_1\intol\lamt^{3\alpha-1}\mu\hat{\chi}\left((z\ptauzphi)^2+(\ptauphi)^2\right)w^{3\alpha}dz-C\mathcal{D}_{\rm tem}(\tau).
    \end{aligned}
\end{equation}
For $I_1$, similar to the estimate of $I_2$, we have
\begin{equation}\label{Eq: box 3, I1, final}
    \begin{aligned}
        I_1\ge&-C_\sigma\intol\lamt^{3\alpha-1}\mu\hat{\chi}\left((z\ptauzphi)^2+(\ptauphi)^2\right)w^{3\alpha}dz-C\mathcal{D}_{\rm tem}(\tau)\\
        &-\sigma\intol\lamt^{3\alpha-1}\mu\hat{\chi}\left((z\ptau^2\pzphi)^2+(\ptau^2\varphi)^2\right)w^{3\alpha}dz,\\
    \end{aligned}
\end{equation}
where $\sigma>0$ is a small constant to be chosen such that the last term on the right-hand side of (\ref{Eq: box 3, I1, final}) can be absorbed by the right-hand side of (\ref{Eq: box 3, I2, final}). Combining (\ref{Eq: box 3, I2, final}) and (\ref{Eq: box 3, I1, final}), we obtain
\begin{equation}\label{Eq: box 3, final}
    \begin{aligned}
        (\square_3)\ge&\intol\frac{1}{3}\lamt^{3\alpha-1}\mu\hat{\chi}(\ptau^2\varphi)^2w^{3\alpha}dz+\intol \frac{1}{3}\lamt^{3\alpha-1}\mu\hat{\chi}(z\ptau^2\pz\varphi)^2w^{3\alpha}dz\\
        &-C\intol\lamt^{3\alpha-1}\mu\hat{\chi}\left((z\ptauzphi)^2+(\ptauphi)^2\right)w^{3\alpha}dz-C\mathcal{D}_{\rm tem}(\tau).\\
    \end{aligned}
\end{equation}

Now, we insert (\ref{Eq: InteriorEst, square1}), (\ref{Eq: interior, ptau N1}) and (\ref{Eq: box 3, final}) into (\ref{Eq: HighOrderInteriorEst, diff}) and integrate with respect to the temporal variable to obtain
\begin{equation}\label{Eq: HighOrderInteriorEst, integrated form}
    \begin{aligned}
        &\lamt^{6\alpha-4}\intol\frac{1}{2}\lamt\hat{\chi}(\ptau^2\varphi)w^3z^2+\frac{2}{3}\hat{\chi}(z\ptauzphi)^2w^4+\frac{3}{2}\delta\hat{\chi}(\ptauphi)^2w^3z^2dz\\
        &+\int_0^\tau\intol\frac{1}{3}\lamt^{3\alpha-1}\left(\mu\hat{\chi}(\ps^2\varphi)^2+\mu\hat{\chi}(z\ps^2\pz\varphi)^2\right)w^{3\alpha}dzds\\
        \le&C\mathcal{E}_{\rm int}(0)+C\int_0^\tau\intol\lamt\mu\hat{\chi}(\psphi)^2w^{3\alpha}+\lamt\mu\hat{\chi}(z\ps\pzphi)^2w^{3\alpha}dzds\\
        &+C\int_0^\tau\left(\lamt^{-\frac{1}{2}}\mathcal{E}_{\rm int}(s)+\lamt^{-\frac{1}{2}}\mathcal{E}_{\rm tem}(s)+\mathcal{D}_{\rm tem}(s)\right)ds.
    \end{aligned}
\end{equation}
Combined with (\ref{Eq: low order interior estimate}), we obtain
\begin{equation}
    \begin{aligned}
        \tilde{\mathcal{E}}_{\rm int}(\tau)+\int_0^\tau\mathcal{D}_{\rm int}(s)ds\le C\mathcal{E}_{\rm int}(0)+C\int_0^\tau\left(\lamt^{-\frac{1}{2}}\mathcal{E}_{\rm int}(s)+\lamt^{-\frac{1}{2}}\mathcal{E}_{\rm tem}(s)+\mathcal{D}_{\rm tem}(s)\right)ds.
    \end{aligned}
\end{equation}
Since
\begin{equation}
    \tilde{\mathcal{E}}_{\rm int}(\tau)\ge C_0\mathcal{E}_{\rm int}(\tau)-C_1\mathcal{E}_{\rm tem}(\tau),
\end{equation}
we obtain
\begin{equation}\label{Eq: interior energy estimate, conclusion}
    \mathcal{E}_{\rm int}(\tau)+\int_0^\tau\mathcal{D}_{\rm int}(s)ds\le C\mathcal{E}_{\rm int}(0)+C\mathcal{E}_{\rm tem}(\tau)+C\int_0^\tau\left(\lamt^{-\frac{1}{2}}\mathcal{E}_{\rm int}(s)+\lamt^{-\frac{1}{2}}\mathcal{E}_{\rm tem}(s)+\mathcal{D}_{\rm tem}(s)\right)ds.
\end{equation}
Inserting (\ref{Eq: Est2}) into (\ref{Eq: interior energy estimate, conclusion}), we prove (\ref{interior energy estimate}).
\end{proof}
\section{Higher-order spatial derivatives}
The goal of this section is to prove regularities of the spatial derivatives. To this end, we rewrite (\ref{phiEqn: linearized}) as
\begin{equation}\label{Eq: rewritten eq}
    \begin{aligned}
        &-\mu\lamt^{\frac{5}{2}-3\alpha}\mathcal{L}_{w^{3\alpha}}\ptauphi-\lamt^{-\frac{1}{2}}\mathcal{L}_{w^4}\varphi\\
        =&\lamt^{\frac{1}{2}}\ptau^2\varphi+\lamt^{-\frac{1}{2}}\lamt'\ptau\varphi+3\delta\lamt^{-\frac{1}{2}}\varphi+\lamt^{-\frac{1}{2}}\mathcal{N}_1[\varphi]-\lamt^{\frac{5}{2}-3\alpha}\mathcal{N}_2[\varphi].\\
    \end{aligned}
\end{equation}
Rewriting the pressure term in (\ref{Eq: rewritten eq}) as
\begin{equation}\label{rewriten operator}
    \mathcal{L}_{w^4}\varphi=w^{4-3\alpha}\mathcal{L}_{w^{3\alpha}}\varphi-\frac{4}{3}(4-3\alpha)w'\pzphi.
\end{equation}
Inserting (\ref{rewriten operator}) into (\ref{Eq: rewritten eq}), we obtain
\begin{equation}\label{Eq: rewritten eq 2}
    \begin{aligned}
        &-\mu\lamt^{\frac{5}{2}-3\alpha}\mathcal{L}_{w^{3\alpha}}\ptauphi-\lamt^{-\frac{1}{2}}w^{4-3\alpha}\mathcal{L}_{w^{3\alpha}}\varphi\\
        =&\lamt^{\frac{1}{2}}\ptau^2\varphi+\lamt^{-\frac{1}{2}}\lamt'\ptau\varphi+3\delta\lamt^{-\frac{1}{2}}\varphi-\frac{4}{3}(4-3\alpha)\lamt^{-\frac{1}{2}}w'\pzphi+\lamt^{-\frac{1}{2}}\mathcal{N}_1[\varphi]-\lamt^{\frac{5}{2}-3\alpha}\mathcal{N}_2[\varphi].
    \end{aligned}
\end{equation}
We first illustrate the outline of the main estimates in this section. In Lemma \ref{lemma: spatial derivatives}, we make use of the form (\ref{Eq: rewritten eq 2}) to obtain the control of
\begin{equation*}
    \mathcal{E}_{\rm spt}(\tau)+\int_0^\tau\mathcal{D}_{\rm spt}(s)ds.
\end{equation*}
Note that the higher order estimate (\ref{Eq: higher spatial est 2}) in Lemma \ref{Lemma: higher spatial est} and the point-wise estimate (\ref{Eq: intLinf}) in Lemma \ref{Lemma: Linfty control} require the point-wise in time estimate of $\mathcal{D}_{\rm spt}(\tau)$. On the other hand, the same control of $\mathcal{D}_{\rm spt}(\tau)$ is also used to bound the term $\int_0^\tau\lamt^{-3\alpha}\mathcal{D}_{\rm spt}(s)ds$ in (\ref{Eq: Est2}). In Lemma \ref{lemma: spatial derivatives 2}, by further rewriting (\ref{Eq: rewritten eq 2}) as
\begin{equation}\label{Eq: rewritten eq 3}
    \begin{aligned}
          -\mu\lamt^{\frac{5}{2}-3\alpha}\mathcal{L}_{w^{3\alpha}}\ptauphi=&\lamt^{\frac{1}{2}}\ptau^2\varphi+\lamt^{-\frac{1}{2}}\lamt'\ptau\varphi+3\delta\lamt^{-\frac{1}{2}}\varphi+\lamt^{-\frac{1}{2}}\mathcal{L}_{w^4}\varphi\\&+\lamt^{-\frac{1}{2}}\mathcal{N}_1[\varphi]-\lamt^{\frac{5}{2}-3\alpha}\mathcal{N}_2[\varphi]\\
    \end{aligned}
\end{equation}
and making full use of the elliptic operator $-\lamt^{\frac{5}{2}-3\alpha}\mu\mathcal{L}_{w^{3\alpha}}\ptauphi$, we can derive the desired point-wise in time estimate of $\mathcal{D}_{\rm spt}(\tau)$. As a result, the term $\int_0^\tau\lamt^{-3\alpha}\mathcal{D}_{\rm spt}(s)ds$ in (\ref{Eq: Est2}) then becomes part of $\int_0^\tau\lamt^{-\min\{\frac{3}{2}\alpha,\frac{1}{2}\}}\mathfrak{E}(s)ds$ in the total energy estimate (\ref{Eq: Total energy est}).

The following Lemma shows the estimates of $\mathcal{E}_{\rm spt}(\tau)$ and $\mathcal{D}_{\rm spt}(\tau)$ imply the weighted $L^2$ estimates of the higher order derivatives of $\varphi$.
\begin{lem}\label{Lemma: higher spatial est}
     Let $-\tilde\varepsilon<\delta$ for $\tilde\varepsilon>0$ small enough. Suppose (\ref{a priori assumptions}) holds for small constant $\omega_0$. Then
     \begin{equation}\label{Eq: higher spatial est 1}
    \begin{aligned}
        \intol\mu^2(\pz^2\varphi)^2 w^{6\alpha-3}z^{2}dz+\intol\mu^2(\pz\varphi)^2 w^{6\alpha-5}dz
        \le C\mathcal{E}_{\rm spt}(\tau),
    \end{aligned}
\end{equation}
     \begin{equation}\label{Eq: higher spatial est 2}
    \begin{aligned}
       \intol\lamt\mu^2(\ptau\pz^2\varphi)^2w^{6\alpha-3}z^{2}dz+\intol\lamt\mu^2(\ptau\pz\varphi)^2w^{6\alpha-5}dz\le C\mathcal{D}_{\rm spt}(\tau).
    \end{aligned}
\end{equation}
\end{lem}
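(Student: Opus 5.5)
The plan is to treat the weighted operator $\mathcal{L}_{w^{3\alpha}}$, which appears in the second term of $\mathcal{E}_{\rm spt}(\tau)$, as an exact divergence and invert it by integrating in $z$. Then Hardy's inequality (\ref{hardy 2}) converts the resulting integral representation into the weighted bound on $\pz\varphi$. Set
\begin{equation*}
g(\tau,z):=w^{3\alpha}z^4\pz\varphi,\qquad f:=\frac{1}{w^3z^4}\pz\left(w^{3\alpha}z^4\pz\varphi\right),
\end{equation*}
so that $\pz g=w^3z^4f$. The second term of $\mathcal{E}_{\rm spt}(\tau)$ is $\mu^2\intol f^2w^3z^2dz$. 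Estimate (\ref{Eq: higher spatial est 2}) is the same statement applied to $\ptau\varphi$, with the additional factor $\lamt$ and the first term of $\mathcal{D}_{\rm spt}(\tau)$ in place of $\mathcal{E}_{\rm spt}(\tau)$. So I only describe (\ref{Eq: higher spatial est 1}).

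\emph{Step 1: the bound on $\pz\varphi$.} I split $[0,1]$ into $[0,\frac12]$ and $[\frac12,1]$.
\begin{itemize}
\item On $[0,\frac12]$ we have $w\sim1$, and $g(0)=0$ because of the factor $z^4$. Hence $g(z)=\int_0^z w^3s^4f\,ds$. Cauchy--Schwarz gives $|g(z)|\lesssim z^{7/2}\big(\int_0^z s^2f^2ds\big)^{1/2}$, so $|\pz\varphi|\lesssim z^{-1/2}\|zf\|_{L^2}$. This is square integrable, and a standard Hardy argument (or Fubini) gives $\int_0^{1/2}(\pz\varphi)^2dz\lesssim\intol f^2w^3z^2dz$.
\item Near $z=1$, put $d=1-z$. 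By Lemma \ref{Lemma: General LE}, $w\sim d$. I claim $g(1)=0$: the factor $w^{3\alpha}$ vanishes there since $\alpha>0$, and $\pz\varphi$ has the finite weighted norm $\intol(\pz\varphi)^2w^4z^4dz<\infty$ from $\mathcal{E}_{\rm tem}$. I would justify this through the trace statement in (\ref{hardy 2}), after first checking that $g$ has finite $\intol d^{-3}(\pz g)^2$. Then (\ref{hardy 2}) with $k=-3<1$ gives
\begin{equation*}
\int_{1/2}^1(\pz\varphi)^2w^{6\alpha-5}dz\sim\int_{1/2}^1 d^{-5}g^2\,dz\lesssim\int_{1/2}^1 d^{-3}(\pz g)^2dz\sim\int_{1/2}^1 d^{3}f^2dz\lesssim\intol f^2w^3z^2dz .
\end{equation*}
\end{itemize}
Since $6\alpha-5<0$ and $w$ is bounded above and below on $[0,\frac12]$, combining the two regions gives the second term of (\ref{Eq: higher spatial est 1}).

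\emph{Step 2: the bound on $\pz^2\varphi$.} Expanding the divergence gives
\begin{equation*}
\pz^2\varphi=w^{3-3\alpha}f-\left(\frac4z+3\alpha\frac{w'}{w}\right)\pz\varphi .
\end{equation*}
Squaring and multiplying by $w^{6\alpha-3}z^2$, the first piece yields $f^2w^3z^2$. The second piece yields $w^{6\alpha-3}(16+9\alpha^2z^2w'^2/w^2)(\pz\varphi)^2$, up to the cross term which is handled by Cauchy--Schwarz. This is $\lesssim(\pz\varphi)^2$ on $[0,\frac12]$ and $\lesssim w^{6\alpha-5}(\pz\varphi)^2$ on $[\frac12,1]$, so Step 1 controls it. This gives the first term of (\ref{Eq: higher spatial est 1}).

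\emph{Main obstacle.} I expect the hardest step to be justifying the vanishing traces $g(0)=0$ and $g(1)=0$. In particular, $g(1)=0$ is what allows Hardy's inequality (\ref{hardy 2}) to be applied with the negative exponent $k=-3$; without it the boundary weight $w^{6\alpha-5}$ could not be reached. I would first prove the estimate for smooth $\varphi$, where both traces are clear, using the a priori finiteness of $\mathfrak{E}$. I would then pass to strong solutions by density.
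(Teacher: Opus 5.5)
Your route is the same as the paper's. The paper also sets $g=w^{3\alpha}z^4\pz\varphi$, uses vanishing traces of $g$ at $z=0,1$ with (\ref{hardy 2}) to bound $\intol g^2w^{-5}z^{-8}dz$ by $\intol f^2w^3z^2dz$, and recovers $\pz^2\varphi$ from the expansion in your Step 2. The gap is in the step you single out yourself, $g(1)=0$: the reasons you give do not imply it.

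\emph{Why your justification of $g(1)=0$ fails.}
\begin{itemize}
\item The vanishing of $w^{3\alpha}$ helps only if $\pz\varphi$ does not blow up like $w^{-3\alpha}$, and weighted $L^2$ bounds cannot exclude that. If $g(1)=c\neq0$, then $\pz\varphi\approx c\,w^{-3\alpha}$ near $z=1$. This gives $\int_{3/4}^1(\pz\varphi)^2w^4dz\approx c^2\int_{3/4}^1(1-z)^{4-6\alpha}dz<\infty$ for every $\alpha\le\frac23$. So finiteness of $\intol(\pz\varphi)^2w^4z^4dz$ is compatible with $g(1)\neq0$.
\item At $z=0$ the analogous argument does work, since $g(0)=c\neq0$ would force $\int_0^{1/2}(\pz\varphi)^2z^4dz\approx c^2\int_0^{1/2}z^{-4}dz=\infty$.
\item The trace part of (\ref{hardy 2}) with $k=-3$ gives only the existence of $g(1)$, not its value.
\item Density does not rescue this. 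Take $\varphi=c\varphi_1$ with $\pz\varphi_1=w^{-3\alpha}z^{-4}$ on $[\frac34,1)$ and smooth elsewhere. Then $g\equiv c$ and $f\equiv0$ near $z=1$. All terms of $\mathcal{E}_{\rm tem}$ and $\mathcal{E}_{\rm spt}$ are finite and $O(c^2)$; in particular $\varphi^2w^3$ is integrable, since $|\varphi|\lesssim 1+(1-z)^{1-3\alpha}+|\log(1-z)|$. Yet $\intol(\pz\varphi)^2w^{6\alpha-5}dz\gtrsim c^2\int_{3/4}^1w^{-5}dz=\infty$.
\end{itemize}
So (\ref{Eq: higher spatial est 1}) is false on the small-energy class. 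Any smooth approximation for which the left side passes to the limit would already have to encode $g(1)=0$, which makes the density step circular.

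\emph{The fix.} The missing input is $z\pz\varphi\in L^\infty$, which is part of the definition of a strong solution and of (\ref{a priori assumptions 2}). It gives $|g(z)|=z^3w^{3\alpha}|z\pz\varphi|\le z^3w^{3\alpha}(z)\|z\pz\varphi\|_{L^\infty}\to0$ as $z\to1$, because $\alpha>0$ and $w(1)=0$. Likewise $z\ptau\pz\varphi\in L^\infty$ gives the vanishing trace of $w^{3\alpha}z^4\ptau\pz\varphi$ needed for (\ref{Eq: higher spatial est 2}). The paper itself only asserts these traces by reference to Jang--Masmoudi, so you should make this input explicit.

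\emph{A smaller point on $[0,\frac12]$.} Your pointwise bound $|\pz\varphi|\lesssim z^{-1/2}\|zf\|_{L^2}$ is not square integrable, since $\int_0 z^{-1}dz=\infty$. Fubini with it loses a factor $\log\frac{1}{z}$. You need (\ref{hardy 2}) with $d=z$ and $k=-6$:
\[
\int_0^{1/2}z^{-8}g^2dz\lesssim\int_0^{1/2}z^{-6}(\pz g)^2dz=\int_0^{1/2}w^6z^2f^2dz .
\]
This is what the paper implicitly uses.
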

\begin{proof}
   To prove \ref{Eq: higher spatial est 1}, we first notice that
   \begin{equation}
       \intol\mu^2(\pzphi)^2w^{6\alpha-5} dz=\intol\mu^2(w^{3\alpha}z^4\pzphi)^2w^{-5}z^{-8} dz.
   \end{equation}
   Via a similar argument as in \cite{JM}, we can prove $w^{3\alpha}z^4\pzphi|_{z=0,1}=0$ in the sense of trace. By Hardy's inequality (\ref{hardy 2}),
\begin{equation}\label{Hardy's 0}
    \begin{aligned}
       \intol\mu^2(w^{3\alpha}z^4\pzphi)^2w^{-5}z^{-8} dz
       \le C\intol\mu^2\left(\frac{1}{z^4w^3}\pz\left(w^{3\alpha}z^4\pz\varphi\right)\right)^2w^3z^{2}dz.
    \end{aligned}
\end{equation}
Since
\begin{equation}
    \begin{aligned}
        w^{3\alpha-3}\pz^2\varphi=\frac{1}{z^4w^3}\pz\left(w^{3\alpha}z^4\pz\varphi\right)-\frac{3\alpha w^{3\alpha-1}w'z^4+4w^{3\alpha}z^3}{z^4w^3}\pzphi,
    \end{aligned}
\end{equation}
and
\begin{equation}
    \begin{aligned}
        \intol\left|\frac{3\alpha w^{3\alpha-1}w'z^4+4w^{3\alpha}z^3}{z^4w^3}\pzphi\right|^2w^{3}z^{2}dz\lesssim\intol(\pzphi)^2w^{6\alpha-5} dz,
    \end{aligned}
\end{equation}
we have
\begin{equation}
    \begin{aligned}
        \intol\mu^2(\pz^2\varphi)^2 w^{6\alpha-3}z^2 dz\le C\intol\mu^2\left(\frac{1}{z^4w^3}\pz\left(w^{3\alpha}z^4\pz\varphi\right)\right)^2w^3z^{2}dz.
    \end{aligned}
\end{equation}
Thus we prove (\ref{Eq: higher spatial est 1}). The proof of (\ref{Eq: higher spatial est 2}) is almost the same.
\end{proof}

The following Lemma shows the estimates of $\mathcal{E}_{\rm spt}(\tau)$ and $\mathcal{D}_{\rm spt}(\tau)$ imply the point-wise controls of $\varphi$ and the derivatives of $\varphi$.
\begin{lem}\label{Lemma: Linfty control}
    Let $-\tilde\varepsilon<\delta$ for $\tilde\varepsilon>0$ small enough. Suppose (\ref{a priori assumptions}) holds for small constant $\omega_0$. Then
    \begin{equation}
        \begin{aligned}
            &|\varphi(\tau,z)|^2\lesssim\mathcal{E}_{\rm tem}(\tau)+\mathcal{E}_{\rm spt}(\tau),\\
            &|z\pz\varphi(\tau,z)|^2\lesssim \mathcal{E}_{\rm spt}(\tau),
        \end{aligned}
    \end{equation}
    and
    \begin{equation}\label{Eq: intLinf}
    \begin{aligned}
    &\lamt(\tau)|\ptauphi(\tau,z)|^2\lesssim\mathcal{E}_{\rm tem}(\tau)+\mathcal{D}_{\rm spt}(\tau),\\
    &\lamt(\tau)|z\ptau\pz\varphi(\tau,z)|^2\lesssim\mathcal{D}_{\rm spt}(\tau).
    \end{aligned}
\end{equation}
\end{lem}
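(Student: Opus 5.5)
We will derive all four bounds from one-dimensional Sobolev-type arguments. Away from the boundary we use the weighted $L^2$ norms of $\varphi,\ptau\varphi$ in $\mathcal{E}_{\rm tem}$. The derivatives are controlled by Lemma \ref{Lemma: higher spatial est}, which bounds $\intol\mu^2(\pz\varphi)^2w^{6\alpha-5}dz$ and $\intol\mu^2(\pz^2\varphi)^2w^{6\alpha-3}z^2dz$ by $\mathcal{E}_{\rm spt}$. The $\ptau$-versions of these, with an extra factor $\lamt$, are bounded by $\mathcal{D}_{\rm spt}$. The argument for $\ptau\varphi$ is identical to the one for $\varphi$, with $\mathcal{E}_{\rm spt}$ replaced by $\lamt^{-1}\mathcal{D}_{\rm spt}$ and $\intol\varphi^2w^3z^4dz$ replaced by $\intol(\ptau\varphi)^2w^3z^4dz\le\lamt^{-1}\mathcal{E}_{\rm tem}$. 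So we only describe the $\varphi$ case.

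\textbf{Bound for $\varphi$.} Pick a point $z_0\in[\frac14,\frac12]$ with $|\varphi(z_0)|^2\lesssim\intol\varphi^2w^3z^4dz\lesssim\mathcal{E}_{\rm tem}$; this exists by the mean value theorem, since $w^3z^4$ is bounded below there. Then for any $z$,
\[
|\varphi(z)-\varphi(z_0)|\le\int_{[z_0,z]}|\pz\varphi|\,dz\le\Big(\intol(\pz\varphi)^2w^{6\alpha-5}dz\Big)^{\frac12}\Big(\intol w^{5-6\alpha}dz\Big)^{\frac12}.
\]
The last integral is finite because $\alpha\le\frac23$ gives $5-6\alpha\ge1>-1$. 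With Lemma \ref{Lemma: higher spatial est} this gives $|\varphi|^2\lesssim\mathcal{E}_{\rm tem}+\mathcal{E}_{\rm spt}$.

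\textbf{Bound for $z\pz\varphi$.} We split into the region near the origin and the region near the boundary.

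Near $z=1$, write $g=z\pz\varphi$. Bounding $\sup|g|^2$ by $\intol g^2\,dz+\intol|g||\pz g|\,dz$ requires weights. Since $w^{6\alpha-5}\ge c\,w^{6\alpha-3}$, we have $\int_{1/4}^1\big(g^2+(\pz g)^2\big)w^{6\alpha-3}dz\lesssim\mathcal{E}_{\rm spt}$. If $6\alpha-3\le0$ the weight is $\gtrsim1$, so $H^1$ on $[\frac14,1]$ embeds in $L^\infty$ and we are done. For $\frac12<\alpha\le\frac23$ we instead use the stronger weight $w^{6\alpha-5}$ on $g$: note $6\alpha-5\le-1$. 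Since $w\sim d:=1-z$, we estimate
\[
|g(z)|^2\lesssim|g(z_1)|^2+\int|g||\pz g|\le|g(z_1)|^2+\Big(\int g^2d^{6\alpha-5}\Big)^{\frac12}\Big(\int(\pz g)^2d^{6\alpha-3}\Big)^{\frac12}\Big(\sup d^{\,2-6\alpha+\dots}\Big).
\]
Splitting the weights as $d^{(6\alpha-5)/2}\cdot d^{(6\alpha-3)/2}=d^{6\alpha-4}$ leaves the factor $d^{4-6\alpha}$, which is bounded since $4-6\alpha\ge0$. Cauchy–Schwarz then closes the estimate.

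Near $z=0$, we use $\intol(\pz\varphi)^2w^{6\alpha-5}dz$ and $\intol(\pz^2\varphi)^2z^2dz$ (as $w\sim1$ there). Writing $\pz(z\pz\varphi)=\pz\varphi+z\pz^2\varphi$, we get $z\pz\varphi\in H^1([0,\frac12])$ with norm controlled by $\mathcal{E}_{\rm spt}$, and the one-dimensional Sobolev embedding finishes.

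\textbf{Main obstacle.} The delicate step is the boundary $L^\infty$ bound for $z\pz\varphi$ (and for $z\ptau\pz\varphi$) when $\alpha$ is near $\frac23$. There the exponent bookkeeping must exactly balance the degenerate weight $w^{6\alpha-3}$ on $\pz^2\varphi$ against the Hardy-gained weight $w^{6\alpha-5}$ on $\pz\varphi$, using $\alpha\le\frac23$. If this bookkeeping fails, the fallback is to apply Hardy's inequality (\ref{hardy 2}) to $w^{3\alpha}z^4\pz\varphi$, which vanishes at the endpoints. That yields the pointwise bound $|w^{3\alpha}\pz\varphi|\lesssim\big(\int(\pz(w^{3\alpha}z^4\pz\varphi))^2w^{-3}\big)^{1/2}w^{\dots}$ directly from $\mathcal{E}_{\rm spt}$.
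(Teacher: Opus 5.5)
Your proposal is correct and follows essentially the paper's route. Both arguments are a one-dimensional Sobolev/fundamental-theorem-of-calculus estimate fed by Lemma \ref{Lemma: higher spatial est}, in which $|g\,\partial_z g|$ for $g=z\partial_z\varphi$ is split by Cauchy--Schwarz between the weight $w^{6\alpha-5}$ on $\partial_z\varphi$ and $w^{6\alpha-3}$ on $\partial_z^2\varphi$, closing exactly because $\alpha\le\frac23$. The differences are cosmetic (the paper does this globally on $[0,1]$ and uses Hardy plus Sobolev instead of your anchor point for $\varphi$), and your case split at $\alpha=\frac12$ and the Hardy fallback are unnecessary, since your weighted bookkeeping with the leftover factor $d^{4-6\alpha}\le 1$ already works for all $0<\alpha\le\frac23$.
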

\begin{proof}
Lemma \ref{Lemma: Est2}, (\ref{Eq: higher spatial est 1}) and Hardy's inequality imply
\begin{equation}
    \begin{aligned}
        \intol\varphi^2dz\lesssim\intol\varphi^2w^3z^4dz+\intol(\pzphi)^2w^{6\alpha-5}dz\lesssim\mathcal{E}_{\rm tem}(\tau)+\mathcal{E}_{\rm spt}(\tau).
    \end{aligned}
\end{equation}
Combined with Sobolev embedding, we obtain
\begin{equation}\label{Pointwise Estimate near origin: ptau_jphi}
    \begin{aligned}
        |\varphi(\tau,z)|^2\lesssim\intol\varphi^2+(\pzphi)^2dz\lesssim\mathcal{E}_{\rm tem}(\tau)+\mathcal{E}_{\rm spt}(\tau).
    \end{aligned}
\end{equation}
At the same time, by Sobolev embedding and Cauchy-Schwarz inequality, making use of (\ref{Eq: higher spatial est 1}) and the fact that $0<\alpha\le\frac{2}{3}$, we have
\begin{equation}\label{Pointwise Estimate near origin: zptau_jpzphi}
    \begin{aligned}
        |z\pz\varphi(\tau,z)|^2\lesssim&\intol\left(|(z\pz\varphi)^2|+|\pz((z\pzphi)^2)|\right) dz\\
        \lesssim&\intol\left((z\pz\varphi)^2+z(\pzphi)^2+|z^2\pzphi\pz^2\varphi|\right) dz\\
        \lesssim&\intol\left((\pz\varphi)^2w^{3-6\alpha}z+(\pz^2\varphi)^2w^{6\alpha-3}z^3\right) dz\lesssim\mathcal{E}_{\rm spt}(\tau).
    \end{aligned}
\end{equation}

Similarly, Lemma \ref{Lemma: Est2}, (\ref{Eq: higher spatial est 2}) and Hardy's inequality yield
\begin{equation}
    \begin{aligned}
        \intol\lamt(\ptau\varphi)^2dz\lesssim&\intol\left(\lamt(\ptau\varphi)^2w^3z^4+\lamt(\ptau\pzphi)^2w^{6\alpha-5}\right)dz\lesssim\mathcal{E}_{\rm tem}(\tau)+\mathcal{D}_{\rm spt}(\tau),
    \end{aligned}
\end{equation}
by Sobolev embedding, we have
\begin{equation}
    \begin{aligned}
        \lamt(\tau)|\ptauphi(\tau,z)|^2\lesssim\intol\lamt(\ptau\varphi)^2+\lamt(\ptau\pz\varphi)^2w^{6\alpha-5}dz\lesssim\mathcal{E}_{\rm tem}(\tau)+\mathcal{D}_{\rm spt}(\tau).
    \end{aligned}
\end{equation}
At the same time, similar to the argument in (\ref{Pointwise Estimate near origin: zptau_jpzphi}),
\begin{equation}
    \begin{aligned}
        \lamt(\tau)|z\ptau\pz\varphi(\tau,z)|^2\lesssim&\lamt\intol\left(|(z\ptau\pz\varphi)^2|+|\pz((z\ptau\pzphi)^2)|\right) dz\\
        \lesssim&\lamt\intol\left((\ptau\pz\varphi)^2w^{3-6\alpha}z+(\ptau\pz^2\varphi)^2w^{6\alpha-3}z^3\right) dz
        \lesssim\mathcal{D}_{\rm spt}(\tau).
    \end{aligned}
\end{equation}
\end{proof}

The following Lemma shows the point-wise in time estimate of $\mathcal{E}_{\rm spt}(\tau)$ and the time integrability of $\mathcal{D}_{\rm spt}(\tau)$.
\begin{lem}\label{lemma: spatial derivatives}
Let $-\tilde\varepsilon<\delta$ for $\tilde\varepsilon>0$ small enough. Suppose (\ref{a priori assumptions}) holds for small constant $\omega_0$. Then
\begin{equation}\label{spatial energy estimate}
    \begin{aligned}
        &\mathcal{E}_{\rm spt}(\tau)+\int_0^\tau\mathcal{D}_{\rm spt}(s)ds\\
        \le&C\mathcal{E}_{\rm spt}(0)+C\mathcal{E}_{\rm int}(0)+C\mathcal{E}_{\rm tem}(0)\\
        &+C\int_0^\tau\left(\lamt^{-1}\mathcal{E}_{\rm spt}(s)+\lamt^{-\frac{1}{2}}\mathcal{E}_{\rm int}(s)+\lamt^{-\min\{\frac{3}{2}\alpha,\frac{1}{2}\}}\mathcal{E}_{\rm tem}(s)+\lamt^{-3\alpha}\mathcal{D}_{\rm spt}(s)\right)ds.
    \end{aligned}
\end{equation}
\end{lem}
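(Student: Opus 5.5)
We prove Lemma \ref{lemma: spatial derivatives} by an elliptic-type energy identity built on the rewritten equation (\ref{Eq: rewritten eq 2}). Set $A:=-\mathcal{L}_{w^{3\alpha}}\varphi$, so that $-\mathcal{L}_{w^{3\alpha}}\ptauphi=\ptau A$ and, up to the constant $\frac43$, $A=\frac{1}{z^4w^3}\pz(w^{3\alpha}z^4\pzphi)$. The left-hand side of (\ref{Eq: rewritten eq 2}) then reads $\mu\lamt^{\frac52-3\alpha}\ptau A+\lamt^{-\frac12}w^{4-3\alpha}A$. We multiply this identity by $\lamt^{3\alpha-\frac52}$, square it, multiply by the weight $w^3z^2$ and integrate over $[0,1]$. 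Expanding the square gives three pieces.
\begin{itemize}
\item The first square is $\mu^2\lamt^{0}\intol(\ptau A)^2w^3z^2dz$. Multiplied by $\lamt$ (equivalently, using the weight $\lamt^{6\alpha-4}$ as in (\ref{Eq: Elliptic est, simplified})), it produces the first part of $\mathcal{D}_{\rm spt}$.
\item The second square is $\lamt^{6\alpha-6}\intol A^2w^{11-6\alpha}z^2dz$. After the same rescaling it produces the second part of $\mathcal{D}_{\rm spt}$.
\item The cross term $2\mu\lamt^{3\alpha-3}\intol A\,\ptau A\,w^{7-3\alpha}z^2dz=\mu\lamt^{3\alpha-3}\frac{d}{d\tau}\intol A^2w^{7-3\alpha}z^2dz$ yields, after adjusting the power of $\lamt$, the time derivative of the first part of $\mathcal{E}_{\rm spt}$. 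It also yields a commutator $-(3\alpha-2)\lamt^{3\alpha-3}\lamt'\intol A^2w^{7-3\alpha}z^2$, which has a good or harmless sign since $3\alpha-2\le0$.
\end{itemize}

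The second part of $\mathcal{E}_{\rm spt}$, namely $\mu^2\intol A^2w^3z^2$, is handled separately. Writing $\mu^2\frac{d}{d\tau}\intol A^2w^3z^2=2\mu^2\intol A\ptau A\,w^3z^2$ and using Cauchy--Schwarz, we bound it by $\sigma\lamt\intol\mu^2(\ptau A)^2w^3z^2+C_\sigma\lamt^{-1}\intol A^2w^3z^2$. The last term is $\lamt^{-1}\mathcal{E}_{\rm spt}$, which explains the $\lamt^{-1}\mathcal{E}_{\rm spt}$ term in (\ref{spatial energy estimate}). Integrating in $\tau$ then controls $\mathcal{E}_{\rm spt}(\tau)+\int_0^\tau\mathcal{D}_{\rm spt}$ by $\mathcal{E}_{\rm spt}(0)$ plus the time integral of the squared right-hand side of (\ref{Eq: rewritten eq 2}), with weights $\lamt^{6\alpha-4}w^3z^2$ (scaled by $\lamt$).

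Next we bound the right-hand side terms one by one, with a weight of order $\lamt^{6\alpha-3}w^3z^2$ after the scalings.
\begin{itemize}
\item The inertial term gives $\int_0^\tau\lamt^{6\alpha-3}\intol(\ps^2\varphi)^2w^3z^2$. Since $6\alpha-3\le3\alpha-1$ (this is where $\alpha\le\frac23$ enters), it splits into a part near the origin and a part near the boundary. The part near the origin is bounded by $\int\mathcal{D}_{\rm int}$, where $w^{3\alpha}\sim1$. The part near the boundary is bounded by $\int\mathcal{D}_{\rm tem}$ via Lemma \ref{Lemma: Coercivity of temporal dissipation}, because there $w^3\lesssim w^{3\alpha}$ and $z\sim1$.
\item The terms $\lamt^{-\frac12}\lamt'\ptauphi$ and $3\delta\lamt^{-\frac12}\varphi$ are bounded in the same way by $\mathcal{D}_{\rm tem}$, $\mathcal{D}_{\rm int}$ and $\lamt^{-\frac12}$ times energies.
\item The term $\lamt^{-\frac12}w'\pzphi$ requires controlling $\intol(\pzphi)^2w^3z^2$ with weight $\lamt^{6\alpha-4}$. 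Near the boundary this follows from Hardy's inequality (\ref{hardy 1}) applied to $\intol(z\pzphi)^2w^4z^2$ in $\mathcal{E}_{\rm tem}$. Near the origin it follows from $\mathcal{E}_{\rm int}$. The resulting bound is $\lamt^{-\frac12}(\mathcal{E}_{\rm tem}+\mathcal{E}_{\rm int})$ after absorbing the weight.
\end{itemize}
Afterwards we invoke Lemma \ref{Lemma: Est2} and Lemma \ref{lemma: interior est} to replace $\int_0^\tau(\mathcal{D}_{\rm tem}+\mathcal{D}_{\rm int})$ by initial data plus $\int\lamt^{-\min\{\frac32\alpha,\frac12\}}\mathcal{E}_{\rm tem}+\lamt^{-\frac12}\mathcal{E}_{\rm int}+\lamt^{-3\alpha}\mathcal{D}_{\rm spt}$, which is exactly the form of (\ref{spatial energy estimate}).

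The main obstacle will be the nonlinear terms, above all $\lamt^{\frac52-3\alpha}\mathcal{N}_2[\varphi]$, which carries the full growing weight and contains second derivatives $\pz^2\ptauphi$ with coefficients of size $O(\varphi,z\pzphi)$. The plan is to expand $\mathcal{N}_2$ from (\ref{Eq: N_2}). Each top-order piece has the form (small coefficient)$\times\mathcal{L}_{w^{3\alpha}}\ptauphi$ plus terms like $\pz(J^{-\alpha-1})\,w^{3\alpha}z\ptau\pzphi/(z^4w^3)$. We treat them as follows.
\begin{itemize}
\item The first kind is bounded by $\omega_1^2\mathcal{D}_{\rm spt}$ and absorbed into the left-hand side.
\item The second kind involves $\pz^2\varphi\cdot\ptau\pzphi$. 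We place $\pz^2\varphi$ in a weighted $L^2$ norm by Lemma \ref{Lemma: higher spatial est}, (\ref{Eq: higher spatial est 1}), and $\ptau\pzphi$ in $L^\infty$, or the reverse. The pointwise bounds of Lemma \ref{Lemma: Linfty control} supply $\lamt^{\frac12}|z\ptau\pzphi|\lesssim\sqrt{\mathcal{D}_{\rm spt}}$, which gives a contribution $\omega_0\mathcal{D}_{\rm spt}$ or $\mathcal{E}_{\rm spt}\mathcal{D}_{\rm spt}\lesssim\omega_0^2\mathcal{D}_{\rm spt}$.
\end{itemize}
The weight bookkeeping (matching $w^{6\alpha-3}$, $w^{6\alpha-5}$ against $w^3z^2$) is where care is needed, so I would first check the worst term $\pz(w^{3\alpha}J^{-\alpha-1})$ explicitly. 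The terms $\lamt^{-\frac12}\mathcal{N}_1$ are quadratic in $\varphi$ and $z\pzphi$ with at most $\pz^2\varphi$. They are bounded using the a priori smallness (\ref{a priori assumptions 2}) by $\omega_0(\mathcal{D}_{\rm spt}+\lamt^{-\frac12}\mathfrak{E})$ and absorbed.
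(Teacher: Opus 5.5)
Your strategy is the same as the paper's. You square (\ref{Eq: rewritten eq 2}) against $\lamt^{6\alpha-4}w^3z^2$. The two squares give $\mathcal{D}_{\rm spt}$, and the cross term gives the time derivative of $\mu\lamt^{3\alpha-2}\intol(\mathcal{L}_{w^{3\alpha}}\varphi)^2w^{7-3\alpha}z^2dz$ plus a good-sign commutator, since $3\alpha\le2$. You use $6\alpha-3\le3\alpha-1$ for the inertial term, treat $\mathcal{N}_2$ with Lemma \ref{Lemma: higher spatial est}, the pointwise bounds of Lemma \ref{Lemma: Linfty control} and $\mathcal{E}_{\rm spt}\le\omega_0^2$, and close with Lemmas \ref{Lemma: Est2} and \ref{lemma: interior est}.

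The only genuine variation is the unweighted part $\mu^2\intol(\mathcal{L}_{w^{3\alpha}}\varphi)^2w^3z^2dz$ of $\mathcal{E}_{\rm spt}$. The paper bounds it by its initial value plus $\int_0^\tau\intol\mu^2\lamt(\mathcal{L}_{w^{3\alpha}}\ps\varphi)^2w^3z^2dzds$, via the fundamental theorem of calculus and Cauchy--Schwarz in time with $\int_0^\infty\lamt^{-1}d\tau<\infty$. Your differential inequality instead costs an extra $\int_0^\tau\lamt^{-1}\mathcal{E}_{\rm spt}ds$. The statement allows that term, so both routes work.

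One step fails as written: the term $\lamt^{-\frac12}w'\pzphi$. You must bound $\lamt^{6\alpha-5}\intol(w')^2(\pzphi)^2w^3z^2dz$, which near $z=1$ puts the weight $w^3$ on $\pz\varphi$. But $\mathcal{E}_{\rm tem}$ only controls $\pz\varphi$ with weight $w^4$, and $\mathcal{E}_{\rm int}$ sees nothing near $z=1$. Hardy's inequality (\ref{hardy 1}) cannot lower the weight on $\pz\varphi$ from $w^4$ to $w^3$ without a bound on $\pz^2\varphi$, and $\pz^2\varphi$ is not in $\mathcal{E}_{\rm tem}$.

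The fix is the paper's: use $|w'|\lesssim z$ and (\ref{Eq: higher spatial est 1}), so that $\intol(\pzphi)^2w^3z^4dz\lesssim\intol(\pzphi)^2w^{6\alpha-5}dz\lesssim\mathcal{E}_{\rm spt}$. The term is then $\lesssim\lamt^{6\alpha-5}\mathcal{E}_{\rm spt}\le\lamt^{-1}\mathcal{E}_{\rm spt}$. The $\mathcal{N}_1$ terms are handled the same way: since $w^5z^2\lesssim w^{6\alpha-3}z^2$, they are simply $\lesssim\omega_1\lamt^{-1}(\mathcal{E}_{\rm tem}+\mathcal{E}_{\rm spt})$, with no $\mathcal{D}_{\rm spt}$ needed. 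These two sources, not the unweighted energy, are where the $\lamt^{-1}\mathcal{E}_{\rm spt}$ term in the statement comes from.

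In $\mathcal{N}_2$ you also only discuss $z\ptau\pzphi\cdot\pz^2\varphi$. The last term of (\ref{Eq: N_2}) produces $\ptauphi\cdot\pzphi$ and $\ptauphi\cdot\pz^2\varphi$ as well. For these you need $\lamt\|\ptauphi\|_{L^\infty}^2\lesssim\lamt\intol(\ptauphi)^2w^3z^4dz+\mathcal{D}_{\rm spt}\lesssim\mathcal{D}_{\rm tem}+\mathcal{D}_{\rm spt}$, which is what the proof of (\ref{Eq: intLinf}) actually gives. The stated bound by $\mathcal{E}_{\rm tem}+\mathcal{D}_{\rm spt}$ is not enough, because it would leave $\omega_0^2\int_0^\tau\mathcal{E}_{\rm tem}ds$ without decay on the right-hand side.
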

\begin{proof}
Taking the square of (\ref{Eq: rewritten eq 2}), multiplying by $\lamt^{6\alpha-4}w^{3}z^{2}$ and integrating over $[0,1]$, we get
\begin{equation}\label{kth derivative of phiEqn: Spt, square directly}
    \begin{aligned}
        &\intol\lamt^{6\alpha-4}(-\lamt^{\frac{5}{2}-3\alpha}\mu\mathcal{L}_{w^{3\alpha}}\ptau\varphi-\lamt^{-\frac{1}{2}}w^{4-3\alpha}\mathcal{L}_{w^{3\alpha}}\varphi)^2w^3z^2dz\\
        \le&\underbrace{C\intol\lamt^{6\alpha-5}(w'\pzphi)^2w^3z^2dz}_{\star}+\underbrace{C\intol\lamt^{6\alpha-4}\left[\lamt^{\frac{1}{2}}\ptau^2\varphi+\lamt^{-\frac{1}{2}}\lamt'\ptau\varphi+3\delta\lamt^{-\frac{1}{2}}\varphi\right]^2w^{3}z^{2}dz}_{\star'}\\
        &+C\intol\lamt^{6\alpha-4}\left(\lamt^{-\frac{1}{2}}\mathcal{N}_1[\varphi]\right)^2w^{3}z^{2}dz+C\intol\lamt^{6\alpha-4}\left(\lamt^{\frac{5}{2}-3\alpha}\mathcal{N}_2[\varphi]\right)^2w^{3}z^{2}dz.
    \end{aligned}
\end{equation}
Making use of the following fact $|w'|\lesssim z$, we have
\begin{equation}
    (\star)\lesssim\intol\lamt^{6\alpha-5}(z\pzphi)^2w^3z^2dz\lesssim\lamt^{-1}\mathcal{E}_{\rm spt}(\tau).
\end{equation}
While for the pure temporal derivative term, we can estimate as follows
\begin{equation}
    (\star')\lesssim \intol\lamt^{6\alpha-4}\left(\lamt\sum_{j=1}^{2}(\ptau^j\varphi)^2+\lamt^{-1}\varphi^2\right)w^{3}z^{2}dz\lesssim \lamt^{-1}\mathcal{E}_{\rm tem}(\tau)+\mathcal{D}_{\rm tem}(\tau)+\mathcal{D}_{\rm int}(\tau).
\end{equation}
For the left-hand side of (\ref{kth derivative of phiEqn: Spt, square directly}), we have
\begin{equation}
\begin{aligned}
&(\text{L.H.S. of }(\ref{kth derivative of phiEqn: Spt, square directly}))\\
    =&\intol\lamt^{6\alpha-4}(-\lamt^{\frac{5}{2}-3\alpha}\mu\mathcal{L}_{w^{3\alpha}}\ptau\varphi-\lamt^{-\frac{1}{2}}w^{4-3\alpha}\mathcal{L}_{w^{3\alpha}}\varphi)^2w^3z^2dz\\
    =&\intol\mu^2\lamt(\mathcal{L}_{w^{3\alpha}}\ptau\varphi)^2w^3z^2+\lamt^{6\alpha-5}(w^{4-3\alpha}\mathcal{L}_{w^{3\alpha}}\varphi)^2w^3z^2+2\mu\lamt^{3\alpha-2} w^{4-3\alpha}\mathcal{L}_{w^{3\alpha}}\ptau\varphi\cdot\mathcal{L}_{w^{3\alpha}}\varphi\cdot w^3z^2 dz\\
    =&\frac{d}{d\tau}\intol\mu\lamt^{3\alpha-2}(\mathcal{L}_{w^{3\alpha}}\varphi)^2w^{7-3\alpha}z^2dz+\intol(2-3\alpha)\mu\lamt^{3\alpha-3}\lamt'(\mathcal{L}_{w^{3\alpha}}\varphi)^2w^{7-3\alpha}z^2dz\\
    &+\intol\mu^2\lamt(\mathcal{L}_{w^{3\alpha}}\ptau\varphi)^2w^3z^2+\lamt^{6\alpha-5}(w^{4-3\alpha}\mathcal{L}_{w^{3\alpha}}\varphi)^2w^3z^2dz.\\
\end{aligned}
\end{equation}
As a result, (\ref{kth derivative of phiEqn: Spt, square directly}) becomes
\begin{equation}\label{kth derivative of phiEqn: Spt, square directly Ineq 3}
    \begin{aligned}
       &\frac{d}{d\tau}\intol\mu\lamt^{3\alpha-2}(\mathcal{L}_{w^{3\alpha}}\varphi)^2w^{7-3\alpha}z^2dz+\intol(2-3\alpha)\mu\lamt^{3\alpha-3}\lamt'(\mathcal{L}_{w^{3\alpha}}\varphi)^2w^{7-3\alpha}z^2dz\\
    &+\intol\mu^2\lamt(\mathcal{L}_{w^{3\alpha}}\ptau\varphi)^2w^3z^2+\lamt^{6\alpha-5}(w^{4-3\alpha}\mathcal{L}_{w^{3\alpha}}\varphi)^2w^3z^2dz\\
        \le&C\intol\lamt^{6\alpha-4}\left(\lamt^{-\frac{1}{2}}\mathcal{N}_1[\varphi]\right)^2w^{3}z^{2}dz+C\intol\lamt^{6\alpha-4}\left(\lamt^{\frac{5}{2}-3\alpha}\mathcal{N}_2[\varphi]\right)^2w^{3}z^{2}dz\\
        &+\lamt^{-1}\mathcal{E}_{\rm tem}(\tau)+C\lamt^{-1}\mathcal{E}_{\rm spt}(\tau)+\mathcal{D}_{\rm tem}(\tau)+\mathcal{D}_{\rm int}(\tau).\\
    \end{aligned}
\end{equation}

Next, we estimate the nonlinear terms. 

{\RaggedRight\underline{\textbf{Estimate of $\mathcal{N}_1[\varphi]$}:}} In this step, we aim to prove
\begin{equation}\label{Eq: Estimate of N_1}
    \begin{aligned}
        \intol \lamt^{6\alpha-4}\left(\lamt^{-\frac{1}{2}}\mathcal{N}_1[\varphi]\right)^2w^{3}z^{2}dz
        \le C\omega_0\lamt^{-1}\mathcal{E}_{\rm tem}(\tau)+C\omega_1\lamt^{6\alpha-5}\intol\mu^2(\mathcal{L}_{w^{3\alpha}}\varphi)^2w^{3}z^2dz.
    \end{aligned}
\end{equation}

Recall the definition of $\mathcal{N}_1[\varphi]$ from (\ref{Eq: nonlinearity, N1}), by direct computation, we have
\begin{equation}
    \begin{aligned}
        &\intol \lamt^{6\alpha-4}\left(\lamt^{-\frac{1}{2}}\mathcal{N}_1[\varphi]\right)^2w^{3}z^{2}dz\\
        \le&C \intol\lamt^{6\alpha-4}\left[\lamt^{-\frac{1}{2}}\left(1-2\varphi-\frac{1}{(\Flm)^2}\right)\right]^2w^{3}z^{2}dz\\
        &+C \intol\lamt^{6\alpha-4}\left[\lamt^{-\frac{1}{2}}\left(\frac{(\Flm)^2-1}{zw^3}\pz\left(\frac{w^4}{z^2}\pz\left(z^3\varphi\right)\right)\right)\right]^2w^{3}z^{2}dz\\
        &+C \intol \lamt^{6\alpha-4}\left[\lamt^{-\frac{1}{2}}\left(\frac{(\Flm)^2}{zw^3}\pz\left[\frac{w^4}{z^2}\pz\left(z^3(\varphi^2+\frac{1}{3}\varphi^3)\right)\right]\right)\right]^2 w^{3}z^{2}dz\\
        &+C \intol \lamt^{6\alpha-4}\left[\lamt^{-\frac{1}{2}}\left(\frac{(\Flm)^2}{zw^3}\pz\left(\frac{28}{9}w^4(J-1)^2\intol(1-\theta)[1+\theta(J-1)]^{-\frac{10}{3}}d\theta\right)\right)\right]^2 w^{3}z^{2}dz\\
        &+C \intol\lamt^{6\alpha-4} \left[\lamt^{-\frac{1}{2}}\left( \frac{4w'}{z}\left((\Flm)^2-\frac{1}{(\Flm)^2}-4\varphi\right)\right) \right]^2 w^{3}z^{2}dz\\
        =&:\sum_{j=1}^5I_j.
    \end{aligned}
\end{equation}
For $I_1$, we have
\begin{equation}\label{Eq: I_1, spt est}
    \begin{aligned}
        I_1\lesssim&\lamt^{6\alpha-5}\intol\left(\frac{-3\varphi^2-2\varphi^3}{(\Flm)^2}\right)^2w^{3}z^{2}dz\lesssim\omega_1\lamt^{6\alpha-5}\intol\varphi^2w^{3}z^{2}dz.
    \end{aligned}
\end{equation}
For $I_2$, we have
\begin{equation}\label{Eq: I_2, spt est}
    \begin{aligned}
        I_2\lesssim&\lamt^{6\alpha-5}\intol\left[\frac{(\Flm)^2-1}{zw^3}\pz\left(\frac{w^4}{z^2}\pz\left(z^3\varphi\right)\right)\right]^2w^{3}z^{2}dz\\
        \lesssim&\omega_1\lamt^{6\alpha-5}\intol(\pz^2\varphi)^2 w^{5}z^{2}+(\pzphi)^2( w^{5}+ w^{3}z^{4})+\varphi^2w^{3}z^{2}dz\\
        \lesssim&\omega_1\lamt^{6\alpha-5}\intol\mu^2(\mathcal{L}_{w^{3\alpha}}\varphi)^2w^{3}z^2dz+\omega_1\lamt^{6\alpha-5}\intol\varphi^2w^{3}z^{2}dz.
    \end{aligned}
\end{equation}
For $I_3$, we have
\begin{equation}\label{Eq: I_3, spt est}
    \begin{aligned}
        I_3\lesssim&\lamt^{6\alpha-5}\intol\left[\frac{(\Flm)^2}{zw^3}\pz\left[\frac{w^4}{z^2}\pz\left(z^3(\varphi^2+\frac{1}{3}\varphi^3)\right)\right]\right]^2 w^{3}z^{2}dz\\
        \lesssim&\omega_1\lamt^{6\alpha-5}\intol(\pz^2\varphi)^2 w^{5}z^{2}+(\pzphi)^2(w^{5}+ w^{3}z^{4})+\varphi^2w^{3}z^{2}dz\\
        \lesssim&\omega_1\lamt^{6\alpha-5}\intol\mu^2(\mathcal{L}_{w^{3\alpha}}\varphi)^2w^{3}z^2dz+\omega_1\lamt^{6\alpha-5}\intol\varphi^2w^{3}z^{2}dz,
    \end{aligned}
\end{equation}
since
\begin{equation}
    \begin{aligned}
        &\left|\frac{(\Flm)^2}{zw^3}\pz\left[\frac{w^4}{z^2}\pz\left(z^3(\varphi^2+\frac{1}{3}\varphi^3)\right)\right]\right|\\
        \lesssim&\omega_1\left[\left|\pz^2\varphi\right|w+\left|\pzphi\right|(z+\frac{w}{z})+|\varphi|\right].\\
    \end{aligned}
\end{equation}
For $I_4$, we can estimate as follows
\begin{equation}\label{Eq: I_4, spt est}
    \begin{aligned}
        I_4\lesssim&\left| \lamt^{6\alpha-4}\intol \left[\lamt^{-\frac{1}{2}}\left(\frac{(\Flm)^2}{zw^3}\pz\left(\frac{28}{9}w^4(J-1)^2\intol(1-\theta)[1+\theta(J-1)]^{-\frac{10}{3}}d\theta\right)\right)\right]^2 w^{3}z^{2}dz\right|\\
       \lesssim&\omega_1\lamt^{6\alpha-5}\intol(\pz^2\varphi)^2 w^{5}z^{2}+(\pzphi)^2( w^{5} + w^{3}z^{4})+\varphi^2w^{3}z^{2}dz\\
        \lesssim&\omega_1\lamt^{6\alpha-5}\intol\mu^2(\mathcal{L}_{w^{3\alpha}}\varphi)^2w^{3}z^2dz+\omega_1\lamt^{6\alpha-5}\intol\varphi^2w^{3}z^{2}dz,
    \end{aligned}
\end{equation}
since
\begin{equation}
    \begin{aligned}
        &\left|\frac{1}{zw^3}\pz\left(w^4(J-1)^2\intol(1-\theta)[1+\theta(J-1)]^{-\frac{10}{3}}d\theta\right)\right|\\
        \lesssim&\left|\intol\frac{(1-\theta)(J-1)^2}{[1+\theta(J-1)]^{\frac{10}{3}}}d\theta\right|\\
        &+\left|\frac{w}{z^2}\intol\frac{(1-\theta)(J-1)(4(\Flm)^2z\pzphi+2(\Flm)z^2(\pzphi)^2+(\Flm)^2z^2\pz^2\varphi)}{[1+\theta(J-1)]^{\frac{10}{3}}}d\theta\right|\\
        &+\left|\frac{w}{z^2}\intol\frac{\theta(1-\theta)(J-1)^24(\Flm)^2z\pzphi+2(\Flm)z^2(\pzphi)^2+(\Flm)^2z^2\pz^2\varphi}{[1+\theta(J-1)]^{\frac{13}{3}}}d\theta\right|\\
        \lesssim&\omega_1\left[\left|\pz^2\varphi\right|w+\left|\pzphi\right|(z+\frac{w}{z})+|\varphi|\right],
    \end{aligned}
\end{equation}
where we make use of the fact
\begin{equation}\label{Eq: pz of J}
    \pz J=4(\Flm)^2\pzphi+2(\Flm)z(\pzphi)^2+(\Flm)^2z\pz^2\varphi.
\end{equation}
For $I_5$, we have
\begin{equation}\label{Eq: I_5, spt est}
    \begin{aligned}
        I_5\lesssim&\left| \lamt^{6\alpha-4}\intol \left[
        \lamt^{-\frac{1}{2}}\left( \frac{4w'}{z}\left((\Flm)^2-\frac{1}{(\Flm)^2}-4\varphi\right)
        \right)\right]^2 w^{3}z^{2}dz\right|\\
        \lesssim&\lamt^{6\alpha-5}\intol
        \left|
        \left(\frac{\varphi^2\cdot p(\varphi)}{(\Flm)^2}\right)\right|^2 w^{3}z^{2}dz\lesssim\omega_1\lamt^{6\alpha-5}\intol\varphi^2w^{3}z^{2}dz.
    \end{aligned}
\end{equation}
Collecting (\ref{Eq: I_1, spt est}), (\ref{Eq: I_2, spt est}), (\ref{Eq: I_3, spt est}), (\ref{Eq: I_4, spt est}) and (\ref{Eq: I_5, spt est}), we obtain (\ref{Eq: Estimate of N_1}).

{\RaggedRight\underline{\textbf{Estimate of $\mathcal{N}_2[\varphi]$:}}} In this step, we prove
\begin{equation}\label{Eq: Estimate of N_2}
    \begin{aligned}
    \intol\lamt^{6\alpha-4}\left(\lamt^{\frac{5}{2}-3\alpha}\mathcal{N}_2[\varphi]\right)^2w^{3}z^{2}dz
    \lesssim C\omega_0\mathcal{D}_{\rm tem}(\tau)+\omega_0\mu^2\lamt\intol(\mathcal{L}_{w^{3\alpha}}\ptau\varphi)^2w^3z^2dz.\\
    \end{aligned}
\end{equation}

Recall the definition of $\mathcal{N}_2[\varphi]$ from (\ref{Eq: N_2}), by direct computation, we have
\begin{equation}
\medmath{
  \begin{aligned}
        &\intol\lamt^{6\alpha-4}\left(\lamt^{\frac{5}{2}-3\alpha}\mathcal{N}_2[\varphi]\right)^2w^{3}z^{2}dz\\
        \lesssim&\lamt\intol\left[\frac{1}{z^4w^3}\left(\frac{1}{\Flm}-1\right)\pz\left(\frac{4}{3}\mu z^4w^{3\alpha}\ptau\pz\varphi\right)\right]^2w^{3}z^{2}dz\\
        &+\lamt\intol\left[\frac{1}{z^4w^3(\Flm)}\pz\left(\frac{4}{3}\mu z^3w^{3\alpha}(J^{-\alpha-1}-1)z\ptau\pz\varphi\right)\right]^2w^{3}z^{2}dz\\
        &+\lamt\intol\left[\frac{1}{z^4w^3(\Flm)}\pz\left(\frac{4}{3}\mu z^3w^{3\alpha}J^{-\alpha-1}([(1+\varphi)^5-1]z\ptau\pz\varphi-(\Flm)^4z\pz\varphi\ptauphi)\right)\right]^2w^{3}z^{2}dz\\
        =&:\sum_{i=1}^3II_i.
    \end{aligned}
    }
\end{equation}
For $II_1$, we have
\begin{equation}\label{Eq: II_1}
    \begin{aligned}
        II_1\lesssim\omega_1\intol\mu^2\lamt(\mathcal{L}_{w^{3\alpha}}\ptau\varphi)^2w^3z^2dz.
    \end{aligned}
\end{equation}
For $II_2$, we recall (\ref{Eq: pz of J}) and calculate directly to obtain
\begin{equation}\label{Eq: II_2}
    \begin{aligned}
        II_2
        \lesssim&\lamt\intol\left[\frac{1}{z^4w^3}\pz\left(\frac{4}{3}\mu z^3w^{3\alpha}(J^{-\alpha-1}-1)z\ptau\pz\varphi\right)\right]^2w^{3}z^{2}dz\\
        =&\lamt\intol\left[\frac{1}{z^4w^3}\left(\frac{16}{3}\mu z^2w^{3\alpha}(J^{-\alpha-1}-1)z\ptau\pz\varphi+4\alpha\mu z^3w^{3\alpha-1}w'(J^{-\alpha-1}-1)z\ptau\pz\varphi\right.\right.\\
        &\left.\left.\ \ \ \ \ \ \ \ \ \ \ \ \ \ \ \ \ +\frac{4}{3}\mu z^2w^{3\alpha}p(\varphi,z\pz\varphi)z\ptau\pz\varphi-\frac{4}{3}(\alpha+1)\mu z^3w^{3\alpha}(\Flm)^2z\pz^2\varphi \cdot z\ptau\pz\varphi J^{-\alpha-2}\right.\right.\\
        &\left.\left.\ \ \ \ \ \ \ \ \ \ \ \ \ \ \ \ \ +\frac{4}{3}\mu z^3w^{3\alpha}(J^{-\alpha-1}-1)z\ptau\pz^2\varphi\right)\right]^2w^{3}z^{2}dz\\
        \lesssim&\omega_1\lamt\intol\mu^2(\ptau\pz^2\varphi)^2w^{6\alpha-3}z^{2}+\mu^2(\ptau\pzphi)^2w^{6\alpha-5}dz\\
        &+\lamt\intol\mu^2(z\ptau\pz\varphi\cdot\pz^2\varphi)^2w^{6\alpha-3}z^2dz,
    \end{aligned}
\end{equation}
where $p(\varphi,z\pzphi)$ is a polynomial without constant term. For the first integral on the right-hand side of (\ref{Eq: II_2}), we apply Lemma \ref{Lemma: higher spatial est} to get
\begin{equation}\label{Eq: II_2,www}
    \begin{aligned}
        \omega_1\lamt\intol\mu^2(\ptau\pz^2\varphi)^2w^{6\alpha-3}z^2+\mu^2(\ptau\pzphi)^2w^{6\alpha-5}dz\lesssim\omega_1\intol\mu^2\lamt(\mathcal{L}_{w^{3\alpha}}\ptau\varphi)^2w^3z^2dz.
    \end{aligned}
\end{equation}
The last term on the right-hand side of (\ref{Eq: II_2}) can be estimated as follows
\begin{equation}\label{Eq: product term estimate, 1}
    \begin{aligned}
        &\lamt\intol\mu^2(z\ptau\pz\varphi\cdot\pz^2\varphi)^2w^{6\alpha-3}z^2dz\\
        \lesssim&\mu^2\lamt(\tau)\|z\ptau\pzphi\|^2_{L^\infty}\intol(\pz^2\varphi)^2w^{6\alpha-3}z^2dz\\
        \lesssim&\mu^2\lamt(\tau)\|z\ptau\pzphi\|^2_{L^\infty}\mathcal{E}_{\rm spt}(\tau)\lesssim\omega_0\intol\mu^2\lamt(\mathcal{L}_{w^{3\alpha}}\ptau\varphi)^2w^3z^2dz,\\
    \end{aligned}
\end{equation}
where we also use (\ref{Eq: higher spatial est 1}) in Lemma \ref{Lemma: higher spatial est}, (\ref{Eq: intLinf}) in Lemma \ref{Lemma: Linfty control} and the fact following from the a priori assumptions (\ref{a priori assumptions}) that $\mathcal{E}_{\rm spt}(\tau)\le\omega_0^2$. Inserting (\ref{Eq: II_2,www}) and (\ref{Eq: product term estimate, 1}) into (\ref{Eq: II_2}) we thus have
\begin{equation}\label{Eq: II_2 final_}
    \begin{aligned}
        II_2
        \lesssim&\omega_0\intol\mu^2\lamt(\mathcal{L}_{w^{3\alpha}}\ptau\varphi)^2w^3z^2dz.
    \end{aligned}
\end{equation}
For $II_3$, we have
\begin{equation}
    \medmath{
    \begin{aligned}
        II_3=&\lamt\intol\left[\frac{1}{z^4w^3(\Flm)}\pz\left(\frac{4}{3}\mu z^3w^{3\alpha}J^{-\alpha-1}([(1+\varphi)^5-1]z\ptau\pz\varphi-(\Flm)^4z\pz\varphi\ptauphi)\right)\right]^2w^{3}z^{2}dz\\
        \lesssim&\lamt\intol\left[\frac{1}{z^4w^3}\pz\left(\frac{4}{3}\mu z^3w^{3\alpha}J^{-\alpha-1}[(1+\varphi)^5-1]z\ptau\pz\varphi\right)\right]^2w^{3}z^{2}dz\\
        &+\lamt\intol\left[\frac{1}{z^4w^3}\pz\left(\frac{4}{3}\mu z^3w^{3\alpha}J^{-\alpha-1}(\Flm)^4z\pz\varphi\ptauphi\right)\right]^2w^{3}z^{2}dz\\
        =&:II_{3,1}+II_{3,2}.\\
    \end{aligned}}
\end{equation}
For $II_{3,1}$, since
\begin{equation}
    \medmath{
    \begin{aligned}
        &\left|\frac{1}{z^4w^3}\pz\left(\frac{4}{3}\mu z^3w^{3\alpha}J^{-\alpha-1}[(1+\varphi)^5-1]z\ptau\pz\varphi\right)\right|\\
        =&\left|\frac{1}{z^4w^3}\left(\frac{16}{3}\mu z^2w^{3\alpha}J^{-\alpha-1}[(1+\varphi)^5-1]z\ptau\pz\varphi+4\alpha\mu z^3w^{3\alpha-1}w'J^{-\alpha-1}[(1+\varphi)^5-1]z\ptau\pz\varphi\right.\right.\\
        &\left.\left.\ \ \ \ \ \ \ \ \ \ +\frac{4}{3}\mu(-\alpha-1) z^3w^{3\alpha} J^{-\alpha-2}\pz J[(1+\varphi)^5-1]z\ptau\pz\varphi-\frac{4}{3}(\alpha+1)\mu z^3w^{3\alpha}J^{-\alpha-1}(1+\varphi)^4\cdot z\pzphi\cdot \ptau\pz\varphi\right.\right.\\
        &\left.\left.\ \ \ \ \ \ \ \ \ \ +\frac{4}{3}\mu z^3w^{3\alpha}J^{-\alpha-1}[(1+\varphi)^5-1]z\ptau\pz^2\varphi\right)\right|\\
        \lesssim&\omega_1\left[|\mu\ptau\pzphi|\left(\frac{w^{3\alpha-3}}{z}+w^{3\alpha-4}z\right)+|\mu\ptau\pz^2\varphi|w^{3\alpha-3}\right]\\
        &+\left|\frac{1}{z^4w^3}\left(\mu z^3w^{3\alpha} J^{-\alpha-2}(\Flm)^2[(1+\varphi)^5-1]z\ptau\pz\varphi\cdot z\pz^2\varphi\right)\right|\\
        &+\left|\frac{1}{z^4w^3}\left(\mu z^2w^{3\alpha} J^{-\alpha-2}\left[4(\Flm)^2z\pzphi+2(\Flm)(z\pzphi)^2\right][(1+\varphi)^5-1]z\ptau\pz\varphi\right)\right|\\
        \lesssim&\omega_1\left[|\mu\ptau\pzphi|\left(\frac{w^{3\alpha-3}}{z}+w^{3\alpha-4}z\right)+|\mu\ptau\pz^2\varphi|w^{3\alpha-3}\right]+|z\ptau\pzphi\cdot\mu\pz^2\varphi|w^{3\alpha-3},
    \end{aligned}
    }
\end{equation}
we have
\begin{equation}
    \begin{aligned}
        II_{3,1}\lesssim&\omega_1\lamt\intol\mu^2(\ptau\pz^2\varphi)^2w^{6\alpha-3}z^2+\mu^2(\ptau\pzphi)^2w^{6\alpha-5}dz\\
        &+\lamt\intol\mu^2(z\ptau\pz\varphi\cdot\pz^2\varphi)^2w^{6\alpha-3}z^2dz.
    \end{aligned}
\end{equation}
For $II_{3,2}$, since
\begin{equation}
    \medmath{
    \begin{aligned}
        &\left|\frac{1}{z^4w^3}\pz\left(\frac{4}{3}\mu z^3w^{3\alpha}J^{-\alpha-1}(\Flm)^4z\pz\varphi\ptauphi\right)\right|\\
        =&\left|\frac{1}{z^4w^3}\left(\frac{16}{3}\mu z^2w^{3\alpha}J^{-\alpha-1}(\Flm)^4z\pz\varphi\ptauphi+4\alpha\mu z^3w^{3\alpha-1}w'J^{-\alpha-1}(\Flm)^4z\pz\varphi\ptauphi\right.\right.\\
        &\left.\left.\ \ \ \ \ \ \ \ \ \ \ \ \ \ +\frac{4}{3}(-\alpha-1)\mu z^3w^{3\alpha} J^{-\alpha-2}\pz J(\Flm)^4z\pz\varphi\ptauphi+\frac{16}{3}\mu z^2w^{3\alpha}J^{-\alpha-1}(\Flm)^3(z\pz\varphi)^2\ptauphi\right.\right.\\
        &\left.\left.\ \ \ \ \ \ \ \ \ \ \ \ \ \ +\frac{4}{3}\mu z^3w^{3\alpha}J^{-\alpha-1}(\Flm)^4\ptauphi\cdot z\pz^2\varphi+\frac{4}{3}\mu z^2w^{3\alpha}J^{-\alpha-1}(\Flm)^4z\pz\varphi\cdot z\ptau\pzphi\right)\right|\\
        \lesssim&\omega_1|\mu\ptau\pzphi|\frac{w^{3\alpha-3}}{z}+|\ptau\varphi\cdot\mu\pzphi|\left(\frac{w^{3\alpha-3}}{z}+w^{3\alpha-4}z\right)+|\ptau\varphi\cdot\mu\pz^2\varphi|w^{3\alpha-3}\\
        &+\left|\frac{1}{z^4w^3}\left(\mu z^3w^{3\alpha} J^{-\alpha-2}(\Flm)^6z\pz\varphi\ptauphi\cdot z\pz^2\varphi\right)\right|\\
        &+\left|\frac{1}{z^4w^3}\left(\mu z^2w^{3\alpha} J^{-\alpha-2}[4(\Flm)^2z\pzphi+2(\Flm)(z\pzphi)^2](\Flm)^4z\pz\varphi\ptauphi\right)\right|\\
        \lesssim&\omega_1|\mu\ptau\pzphi|\frac{w^{3\alpha-3}}{z}+|\ptau\varphi\cdot\mu\pzphi|\left(\frac{w^{3\alpha-3}}{z}+w^{3\alpha-4}z\right)+|\ptau\varphi\cdot\mu\pz^2\varphi|w^{3\alpha-3},
    \end{aligned}
    }
\end{equation}
it follows that
\begin{equation}
    \begin{aligned}
        II_{3,2}\lesssim&\omega_1\lamt\intol\mu^2(\ptau\pzphi)^2w^{6\alpha-3}dz\\
        &+\lamt\intol\mu^2(\ptau\varphi\cdot\pzphi)^2(w^{6\alpha-3}+w^{6\alpha-5}z^4)+\mu^2(\ptau\varphi\cdot\pz^2\varphi)^2w^{6\alpha-3}z^2dz.
    \end{aligned}
\end{equation}
As a result, it holds that
\begin{equation}\label{Eq: II_3, 1}
    \begin{aligned}
        II_3\lesssim&\omega_1\lamt\intol\mu^2(\ptau\pz^2\varphi)^2w^{6\alpha-3}z^2+\mu^2(\ptau\pzphi)^2w^{6\alpha-5}dz\\
        &+\lamt\intol\mu^2(z\ptau\pz\varphi\cdot\pz^2\varphi)^2w^{6\alpha-3}z^2+\mu^2(\ptau\varphi\cdot\pzphi)^2(w^{6\alpha-3}+w^{6\alpha-5}z^4)dz\\
        &+\lamt\intol\mu^2(\ptau\varphi\cdot\pz^2\varphi)^2w^{6\alpha-3}z^2dz.
    \end{aligned}
\end{equation}
For the first line on the right-hand side of (\ref{Eq: II_3, 1}), it can be estimated in the same way as in (\ref{Eq: II_2,www}). For the product terms coming from the second and third lines of (\ref{Eq: II_3, 1}), similar to (\ref{Eq: product term estimate, 1}), we estimate as follows
\begin{equation}
    \begin{aligned}
        &\lamt\intol\mu^2(z\ptau\pz\varphi\cdot\pz^2\varphi)^2w^{6\alpha-3}z^2+\mu^2(\ptau\varphi\cdot\pzphi)^2(w^{6\alpha-3}+w^{6\alpha-5}z^4)dz\\
        &+\lamt\intol\mu^2(\ptau\varphi\cdot\pz^2\varphi)^2w^{6\alpha-3}z^2dz\\
        \lesssim&\mu^2\lamt(\|\ptau\varphi\|_{L^\infty}^2+\|z\ptau\pzphi\|^2_{L^\infty})\intol(\pz^2\varphi)^2w^{6\alpha-3}z^2dz\\
        &+\mu^2\lamt\|\ptau\varphi\|_{L^\infty}^2\intol(\pz\varphi)^2w^{6\alpha-5}dz\\
        \lesssim&\mu^2\lamt(\|\ptau\varphi\|_{L^\infty}^2+\|z\ptau\pzphi\|^2_{L^\infty})\mathcal{E}_{\rm spt}(\tau)\\
        \lesssim&\omega_0^2\mu^2\lamt\intol(\ptau\varphi)^2w^3z^4+(\mathcal{L}_{w^{3\alpha}}\ptau\varphi)^2w^3z^2dz,
    \end{aligned}
\end{equation}
where the last step follows from Lemma \ref{Lemma: Linfty control}. We thus have
\begin{equation}\label{Eq: II_3 final_}
    \begin{aligned}
        II_3\lesssim&\omega_0\mu^2\lamt\intol(\ptau\varphi)^2w^3z^4dz+\omega_0\mu^2\lamt\intol(\mathcal{L}_{w^{3\alpha}}\ptau\varphi)^2w^3z^2dz.
    \end{aligned}
\end{equation}
Collecting (\ref{Eq: II_1}), (\ref{Eq: II_2 final_}) and (\ref{Eq: II_3 final_}), we have (\ref{Eq: Estimate of N_2}).

Collecting all the estimates of the terms in (\ref{kth derivative of phiEqn: Spt, square directly Ineq 3}) and integrating over $[0,\tau]$, we get
\begin{equation}\label{kth derivative of phiEqn: Spt, square directly Ineq 5}
    \begin{aligned}
       &\intol\mu\lamt^{3\alpha-2}(\mathcal{L}_{w^{3\alpha}}\varphi)^2w^{7-3\alpha}z^2dz\\
    &+\int_0^\tau\intol\mu^2\lamt(\mathcal{L}_{w^{3\alpha}}\ps\varphi)^2w^3z^2+\lamt^{6\alpha-5}(w^{4-3\alpha}\mathcal{L}_{w^{3\alpha}}\varphi)^2w^3z^2dzds\\
        \le&C\mathcal{E}_{\rm spt}(0)+C\int_0^\tau\left(\lamt^{-1}\mathcal{E}_{\rm tem}(s)+\lamt^{-1}\mathcal{E}_{\rm spt}(s)+\mathcal{D}_{\rm tem}(s)+\mathcal{D}_{\rm int}(s)\right)ds
    \end{aligned}
\end{equation}
Combined with 
\begin{equation}
    \begin{aligned}
        &\intol\mu^2(\mathcal{L}_{w^{3\alpha}}\varphi)^2w^3z^2dz\\\lesssim&\intol\mu^2(\mathcal{L}_{w^{3\alpha}}\varphi_0)^2w^3z^2dz+\int_0^\tau\intol\mu^2\lamt(\mathcal{L}_{w^{3\alpha}}\ps\varphi)^2w^3z^2dzds,
    \end{aligned}
\end{equation}
also making use of (\ref{Eq: Est2}) and (\ref{interior energy estimate}), we have (\ref{spatial energy estimate}).
\end{proof}
At last, we carry out the point-wise in time estimate of $\mathcal{D}_{\rm spt}(\tau)$.
\begin{lem}\label{lemma: spatial derivatives 2}
Let $-\tilde\varepsilon<\delta$ for $\tilde\varepsilon>0$ small enough. Suppose (\ref{a priori assumptions}) holds for small constant $\omega_0$. Then
\begin{equation}\label{spatial energy estimate 2}
    \begin{aligned}
        \mathcal{D}_{\rm spt}(\tau)\le C\mathcal{E}_{\rm int}(\tau)+C\mathcal{E}_{\rm tem}(\tau)+C\lamt^{-1}\mathcal{E}_{\rm spt}(\tau).
    \end{aligned}
\end{equation}
\end{lem}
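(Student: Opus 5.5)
We will work directly from the rewritten equation (\ref{Eq: rewritten eq 3}). The plan is to square it, multiply by the weight $\lamt^{6\alpha-4}w^3z^2$ and integrate over $[0,1]$ at a fixed time $\tau$, with no time integration. The left-hand side then gives exactly
\[
\mu^2\lamt\intol(\mathcal{L}_{w^{3\alpha}}\ptau\varphi)^2w^3z^2dz,
\]
which is the first part of $\mathcal{D}_{\rm spt}(\tau)$. On the right-hand side we bound the square of the sum by $C$ times the sum of the squares of the individual terms. This gives six contributions: the $\ptau^2\varphi$ term, the $\lamt'\ptau\varphi$ term, the $\delta\varphi$ term, the pressure term $\lamt^{-\frac12}\mathcal{L}_{w^4}\varphi$, and the two nonlinearities $\mathcal{N}_1$ and $\mathcal{N}_2$.

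\textbf{Velocity and lower-order terms.} The $\ptau^2\varphi$ term contributes $\lamt^{6\alpha-3}\intol(\ptau^2\varphi)^2w^3z^2dz$. Since $\alpha\le\frac23$ we have $6\alpha-3\le 6\alpha-4+1$, so this is bounded by the $\lamt^{6\alpha-4}\lamt\intol\hat\chi(\ptau^2\varphi)^2w^3z^2dz$ part of $\mathcal{E}_{\rm int}(\tau)$ near the origin. Away from the origin $z^2\sim z^4$, and we instead use the $\lamt^{3\alpha-1}(\ptau^2\varphi)^2w^3z^4$ part of $\mathcal{E}_{\rm tem}(\tau)$, since $6\alpha-3\le 3\alpha-1$. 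The $\lamt'\ptau\varphi$ and $\delta\varphi$ terms carry weights $\lamt^{6\alpha-5}$ and $\lamt^{6\alpha-5}\lamt'^2\lesssim\lamt^{6\alpha-3}$. They are controlled in the same way, using the $\hat\chi$-localized parts of $\mathcal{E}_{\rm int}$ near $z=0$ and $\mathcal{E}_{\rm tem}$ away from it.

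\textbf{Pressure term.} We use the identity (\ref{rewriten operator}), $\mathcal{L}_{w^4}\varphi=w^{4-3\alpha}\mathcal{L}_{w^{3\alpha}}\varphi-\frac43(4-3\alpha)w'\pz\varphi$. This bounds its contribution by
\[
\lamt^{6\alpha-5}\intol(\mathcal{L}_{w^{3\alpha}}\varphi)^2w^{11-6\alpha}z^2dz+\lamt^{6\alpha-5}\intol(z\pz\varphi)^2w^3z^2dz .
\]
Since $w^{11-6\alpha}\le w^{7-3\alpha}$ (because $4-3\alpha\ge0$), the first piece is at most $\lamt^{-1}\cdot\lamt^{3\alpha-2}\intol(\mathcal{L}_{w^{3\alpha}}\varphi)^2w^{7-3\alpha}z^2dz\lesssim\lamt^{-1}\mathcal{E}_{\rm spt}(\tau)$. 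The second piece is handled the same way using $|w'|\lesssim z$ and Lemma \ref{Lemma: higher spatial est}, or directly by $\mathcal{E}_{\rm tem}$.

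\textbf{Nonlinearities.} For $\mathcal{N}_1$ we repeat the computation leading to (\ref{Eq: Estimate of N_1}), which gives $C\omega_0\lamt^{-1}\mathcal{E}_{\rm tem}+C\omega_1\lamt^{6\alpha-5}\intol\mu^2(\mathcal{L}_{w^{3\alpha}}\varphi)^2w^3z^2dz$. Since $6\alpha-5\le -1$, the last term is at most $\lamt^{-1}\mathcal{E}_{\rm spt}(\tau)$. For $\mathcal{N}_2$ we invoke (\ref{Eq: Estimate of N_2}) pointwise in time. Its right side is $\omega_0\mu^2\lamt\intol(\mathcal{L}_{w^{3\alpha}}\ptau\varphi)^2w^3z^2dz$, which is absorbed into the left-hand side for $\omega_0$ small, plus $\omega_0\lamt\intol(\ptau\varphi)^2w^3z^4dz$, which is bounded by $\mathcal{E}_{\rm tem}$. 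One also has to check that the part of (\ref{Eq: Estimate of N_2}) stated as $\omega_0\mathcal{D}_{\rm tem}$ really only involves $\lamt(\ptau\varphi)^2w^3z^4$. This appears true from (\ref{Eq: II_3 final_}), so no time-integrated quantity enters.

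\textbf{Second part of $\mathcal{D}_{\rm spt}$ and the main obstacle.} It remains to bound $\lamt^{6\alpha-5}\intol(\mathcal{L}_{w^{3\alpha}}\varphi)^2w^{11-6\alpha}z^2dz$, and this was already shown above to be $\lesssim\lamt^{-1}\mathcal{E}_{\rm spt}(\tau)$. The main obstacle is the $\ptau^2\varphi$ term. Its weight $z^2$ is worse than the $z^4$ weight available globally in $\mathcal{E}_{\rm tem}$, and this is exactly why $\mathcal{E}_{\rm int}$ appears on the right of (\ref{spatial energy estimate 2}). The cutoff splitting described above must be done carefully so that the time weights match through $\alpha\le\frac23$. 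A second point to watch is that the nonlinear $\mathcal{N}_2$ estimate uses the $L^\infty$ bounds of Lemma \ref{Lemma: Linfty control}, which themselves involve $\mathcal{D}_{\rm spt}$. This is harmless, because those terms appear multiplied by $\mathcal{E}_{\rm spt}\le\omega_0^2$ and can be absorbed.
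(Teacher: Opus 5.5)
Your proposal is correct and follows the paper's proof essentially step by step. The shared steps are: square (\ref{Eq: rewritten eq 3}) against $\lamt^{6\alpha-4}w^3z^2$ at fixed $\tau$; bound the $\ptau^2\varphi$, $\ptau\varphi$ and $\varphi$ terms by $\mathcal{E}_{\rm int}+\mathcal{E}_{\rm tem}$; bound the pressure term and the second piece of $\mathcal{D}_{\rm spt}$ by $\lamt^{-1}\mathcal{E}_{\rm spt}$; and absorb the $\omega_0\mu^2\lamt\intol(\mathcal{L}_{w^{3\alpha}}\ptau\varphi)^2w^3z^2dz$ part of the $\mathcal{N}_2$ bound, whose remainder is indeed only $\omega_0\lamt\intol(\ptau\varphi)^2w^3z^4dz\lesssim\omega_0\mathcal{E}_{\rm tem}$, as in (\ref{Eq: 222221}).

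One small slip: $\lamt^{6\alpha-5}\intol(z\pz\varphi)^2w^3z^2dz$ cannot be bounded ``directly by $\mathcal{E}_{\rm tem}$''. That energy only carries the weight $w^4z^2$ on $(z\pz\varphi)^2$, which degenerates faster at $z=1$ than $w^3z^2$. Use the other route you give instead: $|w'|\lesssim z$ together with Lemma~\ref{Lemma: higher spatial est}, which yields $\lamt^{-1}\mathcal{E}_{\rm spt}$.
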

\begin{proof}
Take the square of (\ref{Eq: rewritten eq 3}), multiply it by $\lamt^{6\alpha-4}w^3z^2$ and integrate the resulting equation in $z$ over $[0,1]$ to get
\begin{equation}\label{Eq: 222220}
\begin{aligned}
    &\intol\lamt\mu^2(\mathcal{L}_{w^{3\alpha}}\ptauphi)^2w^3z^2dz\\
    \le&C\mathcal{E}_{\rm int}(\tau)+C\mathcal{E}_{\rm tem}(\tau)+C\lamt^{-1}\mathcal{E}_{\rm spt}(\tau)\\
    &+\intol\lamt^{6\alpha-5}(\mathcal{N}_1[\varphi])^2w^3z^2dz+\intol\lamt(\mathcal{N}_2[\varphi])^2w^3z^2dz.
\end{aligned}
\end{equation}
The estimate of the $\mathcal{N}_1[\varphi]$ term is almost the same as (\ref{Eq: Estimate of N_1}):
\begin{equation}\label{Eq: 22222}
\begin{aligned}
    \intol \lamt^{6\alpha-5}\left(\mathcal{N}_1[\varphi]\right)^2w^{3}z^{2}dz
        \le C\omega_0\lamt^{-1}\mathcal{E}_{\rm tem}(\tau)+C\omega_0\lamt^{-1}\mathcal{E}_{\rm spt}(\tau).
\end{aligned}
\end{equation}
As for the $\mathcal{N}_2[\varphi]$ term, we have
\begin{equation}\label{Eq: 222221}
    \begin{aligned}
    \intol \lamt\left(\mathcal{N}_2[\varphi]\right)^2w^{3}z^{2}dz    
        \le&C\omega_0\mathcal{E}_{\rm tem}(\tau)+C\omega_0\mu^2\lamt\intol(\mathcal{L}_{w^{3\alpha}}\ptau\varphi)^2w^3z^2dz.\\
    \end{aligned}
\end{equation}
Inserting (\ref{Eq: 22222}) and (\ref{Eq: 222221}) into (\ref{Eq: 222220})
\begin{equation}\label{Eq: 3333}
\begin{aligned}
    &\intol\lamt\mu^2(\mathcal{L}_{w^{3\alpha}}\ptauphi)^2w^3z^2dz\le C\mathcal{E}_{\rm int}(\tau)+C\mathcal{E}_{\rm tem}(\tau)+C\lamt^{-1}\mathcal{E}_{\rm spt}(\tau).\\
\end{aligned}
\end{equation}
While for the term $\intol\lamt^{6\alpha-5}(w^{4-3\alpha}\mathcal{L}_{w^{3\alpha}}\varphi)^2w^3z^2dz$ in $\mathcal{D}_{\rm spt}(\tau)$, we have
\begin{equation}\label{Eq: 4444}
    \intol\lamt^{6\alpha-5}(w^{4-3\alpha}\mathcal{L}_{w^{3\alpha}}\varphi)^2w^3z^2dz\le C\lamt^{-1}\mathcal{E}_{\rm spt}(\tau).
\end{equation}
Combining (\ref{Eq: 3333}) and (\ref{Eq: 4444}), we have (\ref{spatial energy estimate 2}).

\end{proof}

\section{Proof of Theorem \ref{Thm: stability of GW, LG}}
\begin{proof}[Proof of Theorem \ref{Thm: stability of GW, LG}]
    Let $-\tilde{\varepsilon}<\delta$ for $\tilde{\varepsilon}>0$ small enough such that the coercivity in Lemma \ref{Lemma: Coercivity of temporal energy} holds. Local-in-time existence and uniqueness of compatible radial strong solution to the problem (\ref{phiEqn})-(\ref{BC, phi}) can be established by adapting the finite-difference method in \cite{LXZ1,LXZ2} to our zero bulk viscosity problem. Under the a priori assumption $\mathfrak{E}(\tau)\le\omega_0^2$ for small enough $\omega_0$, suppose the initial data $\varphi_0$ satisfies $\mathfrak{E}(0)<\varepsilon_0$ for $\varepsilon_0>0$ sufficiently small, the temporal derivatives estimate in Lemma \ref{Lemma: Est2}, interior estimate in Lemma \ref{lemma: interior est} and spatial derivatives estimates in Lemma \ref{lemma: spatial derivatives}, \ref{lemma: spatial derivatives 2} hold, which imply the uniform in time estimate
    \begin{equation*}
        \mathfrak{E}(\tau)+\int_0^\tau\mathfrak{D}(s)ds\le C\mathfrak{E}(0)+C\int_0^\tau\lamt^{-\min\{\frac{3}{2}\alpha,\frac{1}{2}\}}\mathfrak{E}(s)ds.
    \end{equation*}
    for $\tau\in[0,+\infty)$. By a continuity argument, we can extend the solution $\varphi$ to any finite time. Thus the proof is completed.
\end{proof}

\section*{Acknowledgments}
    The author was supported in part by the NSF grant DMS-2306910.
\section*{Appendix}
\appendix
\section{Reformulation of the system}\label{Chapter: Appendix Reform}
In this section, we reformulate the system (\ref{nsp1})-(\ref{StressTen}) with $\alpha\in(0,\frac{2}{3}]$ into Lagrangian coordinates. We rewrite the system as
\begin{equation}\label{nsp3}
\begin{cases}
    &\partial_t\rho+\mathrm{div}(\rho \boldsymbol{u})=0\text{ in $\Omega(t)$},\\
&\partial_t(\rho\boldsymbol{u})+\mathrm{div}(\rho\boldsymbol{u}\otimes\boldsymbol{u})+\nabla \rho^{\frac{4}{3}}=\mathrm{div}\left(\mu\rho^{\alpha}(\nabla\boldsymbol{u}+\nabla\boldsymbol{u}^T-\frac{2}{3}(\mathrm{div}\boldsymbol{u})I_{3\times3})\right)-\rho\nabla\varPhi\text{ in $\Omega(t)$},\\
    &\Delta\varPhi=4\pi\rho\text{ in $\mathbb{R}^3$ with }\lim_{|x|\rightarrow\infty}\varPhi(x)=0,\\
    &\rho(t,\bx)>0\text{ in }\Omega(t),\\
    &\rho=0\text{ and }\left(p I_{3\times3}-\mu\rho^{\alpha}(\nabla\boldsymbol{u}+\nabla\boldsymbol{u}^T-\frac{2}{3}(\mathrm{div}\boldsymbol{u})I_{3\times3})\right)\cdot\bnu=0\text{ on $\Gamma(t)=\partial\Omega(t)$},\\
    &\mathcal{V}(\Gamma(t))=\bu\cdot\bnu,\\
    &\rho(0,\bx)=\rho_0(\bx),\ \bu(0,\bx)=\bu_0(\bx)\text{ on }\Omega(0)=\Omega_0.
\end{cases}
\end{equation}
Let $\bet(t,\bx): B_1(0)\rightarrow\Omega(t)$ be the Lagrangian flow map satisfying 
\begin{equation}
    \begin{aligned}
        \frac{d\bet}{dt}(t,\bx)=\bu(t,\bet(t,\bx)),\\
        \bet(0,\bx)=\bet_0(\bx),
    \end{aligned}
\end{equation}
where $\bet_0(\bx):\ B_1(0)\rightarrow\Omega(0)$ is a diffeomorphism with positive Jacobian determinant. We can define the Lagrangian variables
\begin{equation}
    \begin{cases}
        f(t,\bx)=\rho(t,\bet(t,\bx)),\\
        \bv(t,\bx)=\bu(t,\bet(t,\bx)),\\
        \varPsi(t,\bet(t,\bx)),
    \end{cases}
\end{equation}
and
\begin{equation}
\begin{aligned}
    \scrA=[D\bet]^{-1},\ \scrJ=\det D\bet,\ a=\scrJ\scrA.
\end{aligned}
\end{equation}
Then the continuity equation in (\ref{nsp3}) yields
\begin{equation}
    \pt f+f\scrA_i^j\partial_j\bv^i=0.
\end{equation}
For the momentum equation, we have
\begin{equation}
    f\pt v^i+\scrA^k_i\partial_kf^{\frac{4}{3}}=\scrA^k_j\partial_k\left(\mu f^{\alpha}(\scrA^l_j\partial_lv^i+\scrA^l_i\partial_lv^j-\frac{2}{3}\scrA^l_s\partial_lv^s\delta^i_j)\right)-f\scrA^k_i\partial_k\varPsi.
\end{equation}
While the Poisson equation can be written as
\begin{equation}
    \scrA^l_i\partial_l(\scrA^k_i\partial_k\varPsi)=4\pi f.
\end{equation}
Inserting $\pt \scrJ=\scrJ\scrA^k_i\partial_kv^j$ into the continuity equation, we obtain
\begin{equation}
    \pt f+f\frac{\pt \scrJ}{\scrJ}=0,
\end{equation}
which implies
\begin{equation}
    f(t,\bx)\scrJ(t,\bx)=\rho_0(\bet_0(\bx))\scrJ(0,\bx).
\end{equation}
We choose $\bet_0(\bx)$ such that $\rho_0(\bet_0(\bx))\scrJ(0,\bx)=w^3(|\bx|)$, where $w$ is the solution to (\ref{General LE}). The existence of such choice of $\bet_0(\bx)$ is ensured by \cite{DacorognaMoser}. The system (\ref{nsp3})$_1$-(\ref{nsp3})$_3$ then can be rewritten as
\begin{equation}
\begin{cases}
    w^3\pt v^i+a^k_i\partial_k(w^4\scrJ^{-\frac{4}{3}})=a^k_j\partial_k\left(\mu w^{3\alpha}\scrJ^{-\alpha}(\scrA^l_j\partial_lv^i+\scrA^l_i\partial_lv^j-\frac{2}{3}\scrA^l_s\partial_lv^s\delta^i_j)\right)-w^3\scrA^k_i\partial_k\varPsi,\\
    \scrA^l_i\partial_l(\scrA^k_i\partial_k\varPsi)=4\pi w^3\scrJ^{-1}.
\end{cases}
\end{equation}
Similarly, the boundary conditions can be rewritten as
\begin{equation}
    \left.\left(w^4\scrJ^{-\frac{4}{3}}\delta^i_j-w^{3\alpha}\scrJ^{-\alpha}(\scrA^l_j\partial_lv^i+\scrA^l_i\partial_lv^j-\frac{2}{3}\scrA^l_k\partial_lv^k\delta^i_j)\right)\scrA^i_rn^r\right|_{\partial B_1(0)}=0,
\end{equation}
where $\boldsymbol{n}:=\frac{\bx}{|\bx|}$. In a radially symmetric setting, we can let
\begin{equation}
    \bet(t,\bx)=\chi(t,z)\bx,\ z=|\bx|.
\end{equation}
By direct computation, we have
\begin{equation}
\begin{aligned}
    \scrJ=\det([\chi(\delta_i^j+\frac{\pz\chi}{z\chi}x_ix^j)])=\chi^2(\chi+z\pz\chi),\\
    \scrA^i_j=\frac{\delta^i_j}{\chi}-\frac{\pz\chi}{z\chi(\chi+z\pz\chi)}x_jx^i,
\end{aligned}
\end{equation}
and thus
\begin{equation}
\begin{aligned}
    \scrA^j_k\partial_j=&\frac{x_k}{z(\chi+z\pz\chi)}\pz,\ a^j_k\partial_j=\frac{\chi^2x_k}{z}\pz,\\
    \scrA^j_k\partial_jv^i=&\left(\frac{\delta^j_k}{\chi}-\frac{\pz\chi}{z\chi(\chi+z\pz\chi)}x_kx^j\right)\partial_j(x^i\pt\chi)\\
    =&\frac{\pt\chi}{\chi}\delta^i_k+\frac{\chi\pt\pz\chi-\pt\chi\pz\chi}{z\chi(\chi+z\pz\chi)}x_kx^i.
\end{aligned}
\end{equation}
As a result, the viscosity term becomes
\begin{equation}
\medmath{\begin{aligned}
    &a^k_j\partial_k\left(w^{3\alpha}\scrJ^{-\alpha}(\scrA^l_j\partial_lv^i+\scrA^l_i\partial_lv^j-\frac{2}{3}\scrA^l_s\partial_lv^s\delta^i_j)\right)\\
    =&\left(\chi(\chi+z\pz\chi)\delta^k_j-\frac{\chi\pz\chi}{z}x_jx^k\right)\partial_k\left(w^{3\alpha}\scrJ^{-\alpha}\frac{\chi\cdot z\pt\pz\chi-\pt\chi\cdot z\pz\chi}{\chi(\chi+z\pz\chi)}(2\frac{x_jx^i}{z^2}-\frac{2}{3}\delta^i_j)\right)\\
     =&\left(\chi(\chi+z\pz\chi)\delta^k_j-\frac{\chi\pz\chi}{z}x_jx^k\right)\frac{2(\delta_{kj}x^i+\delta^i_kx_j)z^2-4x_jx^ix_k}{z^4}w^{3\alpha}\scrJ^{-\alpha}\frac{\chi\cdot z\pt\pz\chi-\pt\chi\cdot z\pz\chi}{\chi(\chi+z\pz\chi)}\\
     &+\left(\chi(\chi+z\pz\chi)\delta^k_j-\frac{\chi\pz\chi}{z}x_jx^k\right)(2\frac{x_jx^i}{z^2}-\frac{2}{3}\delta^i_j)\frac{x_k}{z}\pz\left(w^{3\alpha}\scrJ^{-\alpha}\frac{\chi\cdot z\pt\pz\chi-\pt\chi\cdot z\pz\chi}{\chi(\chi+z\pz\chi)}\right)\\
     =&\frac{4}{z}w^{3\alpha}\scrJ^{-\alpha}(\chi\cdot z\pt\pz\chi-\pt\chi\cdot z\pz\chi)\frac{x^i}{z}+\frac{4}{3}\chi^2\pz\left(w^{3\alpha}\scrJ^{-\alpha}\frac{\chi\cdot z\pt\pz\chi-\pt\chi\cdot z\pz\chi}{\chi(\chi+z\pz\chi)}\right)\frac{x^i}{z},\\
\end{aligned}}
\end{equation}
where we use $\bv=\bx\pt\chi$. For the Poisson equation, we have
\begin{equation}
    \frac{1}{(z\chi)^2(\chi+z\pz\chi)}\pz\left(\frac{(z\chi)^2}{\chi+z\pz\chi}\pz\varPsi\right)=\frac{4\pi w^3}{\chi^2(\chi+z\pz\chi)},
\end{equation}
then
\begin{equation}
    w^3\scrA^k_i\partial_k\varPsi=w^3\frac{x_i}{z(\chi+z\pz\chi)}\pz\varPsi=\left(\frac{w^3}{z^3\chi^2}\int_0^z4\pi w^3s^2ds\right)\frac{x_i}{z}.
\end{equation}
So the momentum equation becomes
\begin{equation}
\begin{aligned}
    &\pt^2\chi+\frac{\chi^2}{zw^3}\pz\left(w^4\scrJ^{-\frac{4}{3}}\right)+\frac{1}{z^3\chi^2}\int_0^z4\pi w^3s^2ds\\
    =&\frac{1}{\chi w^3z^4}\pz\left(\frac{4}{3}\mu z^3\chi^2\left(w^3\scrJ^{-1}\right)^{\alpha}\frac{\chi\cdot z\pt\pz\chi-z\pz\chi\pt\chi}{\chi+z\pz\chi}\right),\\
    \end{aligned}
\end{equation}
and the right-hand side can also be written as
\begin{equation}
\begin{aligned}
   \frac{1}{\chi w^3z^4}\pz\left(\frac{4}{3}\mu z^3\chi^2\left(w^3\scrJ^{-1}\right)^{\alpha}\frac{\chi\cdot z\pt\pz\chi-z\pz\chi\pt\chi}{\chi+z\pz\chi}\right).
\end{aligned}
\end{equation}

Similarly, the boundary conditions can be written as
\begin{equation}
    \left.\frac{4}{3}\mu\left(w^3\scrJ^{-1}\right)^{\alpha}\left(\frac{\chi\cdot z\pt\pz\chi-\pt\chi\cdot z\pz\chi}{\chi(\chi+z\pz\chi)}\right)\right|_{z=1}=0.
\end{equation}

\bibliographystyle{abbrv}
    \bibliography{Lookup}

@article{CAO2026114572,
title = {Expanding solutions to free boundary 3D spherically symmetric compressible Navier-Stokes-Poisson equations near the Lane-Emden stars},
journal = {Journal of Differential Equations},
volume = {478},
pages = {114572},
year = {2026},
month = {10},
doi = {https://doi.org/10.1016/j.jde.2026.114572},
author = {Han Cao},
}

@book{Chan,
    author ={Chandrasekhar, Subrahmanyan},
    title ={An Introduction to the Study of Stellar Structure} ,
    publisher ={University of Chicago Press},
    year = {1939}
}

@article{CHLWW,
author = {Chen, Gui-Qiang and Huang, Feimin and Li, Tianhong and Wang, Weiqiang and Wang, Yong},
year = {2024},
month = {03},
pages = {77},
title = {Global Finite-Energy Solutions of the Compressible Euler–Poisson Equations for General Pressure Laws with Large Initial Data of Spherical Symmetry},
volume = {405},
journal = {Commun. Math. Phys.},
doi = {10.1007/s00220-023-04916-1}
}

@article{CZZ,
author = {Chen, Gui-Qiang and Zhang, Jiawen and Zhu, Shengguo},
year = {2026},
month = {01},
pages = {},
title = {Global Well-Posedness of the Vacuum Free Boundary Problem for the Degenerate Compressible Navier-Stokes Equations With Large Data of Spherical Symmetry},
journal={arXiv},
doi = {10.48550/arXiv.2601.06620}
}

@article{CCL,
author = {Cheng, Ming and Cheng, Xing and Lin, Zhiwu},
year = {2025},
month = {06},
pages = {},
title = {Expanding Solutions Near Unstable Lane-Emden Stars},
volume = {406},
journal = {Commun. Math. Phys.},
doi = {10.1007/s00220-025-05308-3}
}

@article{CLW,
author = {Cheng, Ming and Lin, Zhiwu and Wang, Yucong},
year = {2025},
month = {10},
pages = {111239},
title = {Turning point principle for the stability of viscous gaseous stars},
volume = {290},
journal = {J. Funct. Anal.},
doi = {10.1016/j.jfa.2025.111239}
}

@article{CS1,
author = {Coutand, Daniel and Shkoller, Steve},
year = {2011},
month = {03},
pages = {328 - 366},
title = {Well-Posedness in Smooth Function Spaces for Moving-Boundary 1-D Compressible Euler Equations in Physical Vacuum},
volume = {64},
journal = {Commun. Pure Appl. Math.},
doi = {10.1002/cpa.20344}
}

@article{CS2,
author = {Coutand, Daniel and Shkoller, Steve},
year = {2010},
month = {03},
pages = {515-616},
title = {Digital Object Identifier ( Well-Posedness in Smooth Function Spaces for the Moving-Boundary Three-Dimensional Compressible Euler Equations in Physical Vacuum},
volume = {206},
journal = {Arch. Ration. Mech. Anal.},
doi = {10.1007/s00205-012-0536-1}
}

@article{CLS,
author = {Coutand, Daniel and Lindblad, Hans and Shkoller, Steve},
year = {2009},
month = {06},
pages = {559-587},
title = {A Priori Estimates for the Free-Boundary 3D Compressible Euler Equations in Physical Vacuum},
volume = {296},
journal = {Commun. Math. Phys.},
doi = {10.1007/s00220-010-1028-5}
}

@article{DL,
title = {Global existence of weak solution for the compressible Navier–Stokes–Poisson system for gaseous stars},
author = {Qin Duan and Hai-Liang Li},
journal = {J. Differential Equations},
volume = {259},
number = {10},
pages = {5302-5330},
year = {2015},
issn = {0022-0396},
doi = {https://doi.org/10.1016/j.jde.2015.06.029},
}

@article{DLYY,
author = {Deng, Yinbin and Liu, Tai-Ping and Yang, Tong and Yao, Zheng-an},
year = {2002},
month = {09},
pages = {261-285},
title = {Solutions of Euler-Poisson Equations for Gaseous Stars},
volume = {164},
journal = {Arch. Ration. Mech.  Anal.},
doi = {10.1007/s00205-002-0209-6}
}

@article{DacorognaMoser,
author = {Dacorogna, Bernard and Moser, Jürgen},
year = {1990},
month = {02},
pages = {1-26},
title = {On a partial differential equation involving the Jacobian determinant},
volume = {7},
journal = {Ann. Inst. H. Poincaré Anal. Non Linéaire},
doi = {10.1016/S0294-1449(16)30307-9}
}

@article{FL,
author = {Fu, Chun-Chieh and Lin, Song-Sun},
year = {1998},
month = {10},
pages = {461-469},
title = {On the critical mass of the collapse of a gaseous star in spherically symmetric and isentropic motion},
volume = {15},
journal = {Jpn. J. Ind. and Appl. Math.},
doi = {10.1007/BF03167322}
}

@article{ZF,
author = {Fang, Daoyuan and Zhang, Ting},
year = {2006},
month = {10},
pages = {223-253},
title = {Global Behavior of Compressible Navier-Stokes Equations with a Degenerate Viscosity Coefficient},
volume = {182},
journal = {Arch. Ration. Mech. Anal},
doi = {10.1007/s00205-006-0425-6}
}

@article{GL,
author = {Gu, Xumin and Lei, Zhen},
year = {2014},
month = {05},
pages = {662-723},
title = {Local Well-posedness of the three dimensional compressible Euler--Poisson equations with physical vacuum},
volume = {105},
journal = {J. Math. Pures Appl.},
doi = {10.1016/j.matpur.2015.11.010}
}

@article{GW,
author = {Goldreich, Peter and Weber, Stephen},
year = {1980},
month = {05},
pages = {991-997},
title = {Homologously collapsing stellar cores},
volume = {238},
journal = {Astrophys. J.},
doi = {10.1086/158065}
}

@article{GLX,
author = {Guo, Zhenhua and Li, Hai-Liang and Xin, Zhouping},
year = {2012},
month = {01},
pages = {371-412},
title = {Lagrange Structure and Dynamics for Solutions to the Spherically Symmetric Compressible Navier-Stokes Equations},
volume = {309},
journal = {Commun. Math. Phys.},
doi = {10.1007/s00220-011-1334-6}
}

@article{HJ,
author = {Had\v{z}i\'{c}, Mahir and Jang, Juhi},
year = {2016},
month = {05},
pages = {},
title = {Nonlinear Stability of Expanding Star Solutions of the Radially Symmetric Mass-Critical Euler-Poisson System},
volume = {71},
journal = {Commum. Pure Appl. Math.},
doi = {10.1002/cpa.21721}
}

@article{HJ2,
author = {Hadzic, Mahir and Jang, Juhi},
year = {2018},
month = {12},
pages = {},
title = {Expanding large global solutions of the equations of compressible fluid mechanics},
volume = {214},
journal = {Invent. math.},
doi = {10.1007/s00222-018-0821-1}
}

@article{HJ3,
author = {Hadzic, Mahir and Jang, J.},
year = {2019},
month = {09},
pages = {},
title = {A Class of Global Solutions to the Euler–Poisson System},
volume = {370},
journal = {Commun. Math. Phys.},
doi = {10.1007/s00220-019-03525-1}
}

@article{HJL,
author = {Hadzic, Mahir and Jang, Juhi and Lam, King Ming},
year = {2022},
month = {12},
pages = {},
title = {Nonradial stability of self-similarly expanding Goldreich-Weber stars},
journal = {arXiv},
doi = {10.48550/arXiv.2212.11420}
}

@article{J1,
author = {Jang, Juhi},
year = {2008},
month = {05},
pages = {265-307},
title = {Nonlinear Instability in Gravitational Euler–Poisson Systems for $\gamma=\frac{6}{5}$},
volume = {188},
journal = {Arch. Ration. Mech.  Anal.},
doi = {10.1007/s00205-007-0086-0}
}

@article{J2,
author = {Jang, Juhi},
year = {2007},
month = {07},
pages = {797–863},
title = {Local Well-Posedness of Dynamics of Viscous Gaseous Stars},
volume = {195},
journal = {Arch. Ration. Mech. Anal.},
doi = {10.1007/s00205-009-0253-6}
}

@article{J3,
author = {Jang, Juhi},
year = {2014},
month = {09},
pages = {1418-1465},
title = {Nonlinear Instability Theory of Lane-Emden Stars},
volume = {67},
journal = {Commum. Pure Appl. Math.},
doi = {10.1002/cpa.21499}
}

@article{JM,
author = {Jang, Juhi and Masmoudi, Nader},
year = {2015},
month = {01},
pages = {61-111},
title = {Well-posedness of Compressible Euler Equations in a Physical Vacuum},
volume = {68},
journal = {Communications on Pure and Applied Mathematics},
doi = {10.1002/cpa.21517}
}

@article{JM0,
author = {Jang, Juhi and Masmoudi, Nader},
year = {2009},
month = {05},
pages = {1327-1385},
title = {Well‐posedness for compressible Euler equations with physical vacuum singularity},
volume = {62},
journal = {Communications on Pure and Applied Mathematics},
doi = {10.1002/cpa.20285}
}

@article{JT,
author = {Jang, Juhi and Tice, Ian},
year = {2013},
month = {01},
pages = {1121-1181},
title = {Instability theory of the Navier-Stokes-Poisson equations},
volume = {6},
journal = {Anal. PDE},
doi = {10.2140/apde.2013.6.1121}
}

@article{KL,
author = {Kong, Huihui and Li, Hai-Liang},
year = {2017},
month = {01},
pages = {1-34},
title = {Free boundary value problem to 3D spherically symmetric compressible Navier–Stokes–Poisson equations},
volume = {68},
journal = {Z. Angew. Math. Phys.},
doi = {10.1007/s00033-016-0763-7}
}

@book{KMP,
author = {Kufner, Alois and Maligranda, Lech and Persson, Larse-Erik},
year = {2007},
pages = {162},
title = {The Hardy Inequality. About Its History and Some Related Results},
publisher = {Vydavatelsky Servis Publishing House, Pilsen}
}

@article{LSS,
author = {Lin, Song-Sun},
year = {1997},
month = {05},
pages = {539-569},
title = {Stability of Gaseous Stars in Spherically Symmetric Motions},
volume = {28},
journal = {SIAM J. Math. Anal.},
doi = {10.1137/S0036141095292883}
}

@article{LWZ,
author = {Lin, Zhiwu and Wang, Yucong and Zhu, Hao},
year = {2024},
month = {07},
pages = {843-880},
title = {Nonlinear stability of non-rotating gaseous stars},
volume = {391},
journal = {Math. Ann.},
doi = {10.1007/s00208-024-02940-7}
}

@article{LXY,
author = {Liu, Tai-Ping and Yang, Tong},
year = {1997},
month = {10},
pages = {},
title = {Vacuum States for Compressible Flow},
volume = {4},
journal = {Discrete Contin. Dyn. Syst.},
doi = {10.3934/dcds.1998.4.1}
}

@article{LY,
author = {Liu, Tai-Ping and Yang, Tong},
year = {2000},
month = {01},
pages = {495-509},
title = {Compressible flow with vacuum and physical singularity},
volume = {7},
journal = {Methods Appl. Anal.},
doi = {10.4310/MAA.2000.v7.n3.a7}
}

@article{LXZ3,
author = {Luo, Tao and Xin, Zhouping and Zeng, Huihui},
year = {2014},
month = {02},
pages = {763-831},
title = {Well-Posedness for the Motion of Physical Vacuum of the Three-dimensional Compressible Euler Equations with or without Self-Gravitation},
volume = {213},
journal = {Arch. Ration. Mech. Anal.},
doi = {10.1007/s00205-014-0742-0}
}

@article{LuoWangZeng2024,
author = {Luo, Tao and Wang, Yan-Lin and Zeng, Huihui},
year = {2024},
month = {09},
pages = {1-33},
title = {Nonlinear asymptotic stability of gravitational hydrostatic equilibrium for viscous white dwarfs with symmetric perturbations},
volume = {63},
journal = {Calculus of Variations and Partial Differential Equations},
doi = {10.1007/s00526-024-02831-4}
}

@article{LZ,
author = {Lin, Zhiwu and Zeng, Chongchun},
year = {2022},
month = {11},
pages = {2511-2572},
title = {Separable Hamiltonian PDEs and Turning Point Principle for Stability of Gaseous Stars},
volume = {75},
journal = {Commum. Pure Appl. Math.},
doi = {10.1002/cpa.22027}
}

@article{LWX,
author = {Li, Hai-Liang and Wang, Yuexun and Xin, Zhouping},
year = {2024},
month = {10},
pages = {3555-3639},
title = {On the vacuum free boundary problem of the viscous Saint-Venant system for shallow water in two dimensions},
volume = {391},
journal = {Math. Ann.},
doi = {10.1007/s00208-024-03010-8}
}

@article{LXin,
title = {On the expanding configurations of viscous radiation gaseous stars: Thermodynamic model},
journal = {J. Differential Equations},
volume = {268},
number = {6},
pages = {2717-2751},
year = {2020},
issn = {0022-0396},
doi = {https://doi.org/10.1016/j.jde.2019.09.043},
author = {Xin Liu}
}

@article{LXin2,
author = {Liu, Xin},
year = {2018},
month = {12},
pages = {6100-6155},
title = {A Model of Radiational Gaseous Stars},
volume = {50},
journal = {SIAM J. Math. Anal.},
doi = {10.1137/17M1133476}
}

@article{Liu2019Isentropic,
  title={On the expanding configurations of viscous radiation gaseous stars: the isentropic model},
  author={Liu, Xin},
  journal={Nonlinearity},
  volume={32},
  number={8},
  pages={2975-3011},
  year={2019},
  doi={10.1088/1361-6544/ab10d5}
}

@article{LXZ1,
author = {Luo, Tao and Xin, Zhouping and Zeng, Huihui},
year = {2016},
month = {06},
pages = {90-182},
title = {On Nonlinear Asymptotic Stability of the Lane-Emden Solutions for the Viscous Gaseous Star Problem},
volume = {291},
journal = {Adv. Math.},
doi = {10.1016/j.aim.2015.12.022}
}

@article{LXZ2,
author = {Luo, Tao and Xin, Zhouping and Zeng, Huihui},
year = {2016},
month = {11},
pages = {657–702},
title = {Nonlinear Asymptotic Stability of the Lane-Emden Solutions for the Viscous Gaseous Star Problem with Degenerate Density Dependent Viscosities},
volume = {347},
journal = {Commun. Math. Phys.},
doi = {10.1007/s00220-016-2753-1}
}

@article{Makino,
author = {Makino, Tetu},
year = {1992},
month = {07},
pages = {615-624},
title = {Blowing Up Solutions of the Euler-Poisson Equation for the Evolution of Gaseous Stars},
volume = {21},
journal = {Transport Theory Stat. Phys.},
doi = {10.1080/00411459208203801}
}

@article{Parmeshwar,
author = {Parmeshwar, Shrish},
year = {2022},
month = {05},
pages = {},
title = {Global Existence for the N Body Euler–Poisson System},
volume = {244},
journal = {Arch. Ration. Mech. Anal.},
doi = {10.1007/s00205-022-01758-4}
}

@article{PHJ,
author = {Parmeshwar, Shrish and Hadzic, Mahir and Jang, Juhi},
year = {2020},
month = {10},
pages = {1},
title = {Global expanding solutions of compressible Euler equations with small initial densities},
volume = {79},
journal = {Quart. Appl. Math.},
doi = {10.1090/qam/1580}
}

@article{Rein,
author = {Rein, Gerhard},
year = {2003},
month = {06},
pages = {115-130},
title = {Non-Linear Stability of Gaseous Stars},
volume = {168},
journal = {Arch. Ration. Mech.  Anal.},
doi = {10.1007/s00205-003-0260-y}
}

@article{Rickard,
author = {Rickard, Calum},
year = {2021},
month = {08},
pages = {},
title = {Global Solutions to the Compressible Euler Equations with Heat Transport by Convection Around Dyson’s Isothermal Affine Solutions},
volume = {241},
journal = {Arch. Ration. Mech. Anal.},
doi = {10.1007/s00205-021-01669-w}
}

@article{YZ,
author = {Yang, Tong and Zhu, Changjiang},
year = {2002},
month = {10},
pages = {329-363},
title = {Compressible Navier–Stokes Equations with Degenerate Viscosity Coefficient and Vacuum},
volume = {230},
journal = {Commun. Math. Phys.},
doi = {10.1007/s00220-002-0703-6}
}

@article{WangZhangZhu2026Jun,
author={Wang, Demin and Zhang, Jiawen and Zhu, Shengguo},
year = {2026},
month = {06},
pages = {},
journal={arXiv},
title = {On a Local Existence Theorem for the Evolution Equation of Viscous Gaseous Stars in a Physical Vacuum},
doi = {10.48550/arXiv.2606.22822}
}

@article{WangZhangZhu2026Jul,
author = {Wang, Demin and Zhang, Jiawen and Zhu, Shengguo},
year = {2026},
month = {07},
pages = {},
journal={arXiv},
title = {Global dynamics of viscous gaseous stars in a physical vacuum},
doi = {10.48550/arXiv.2607.09189}
}

@article{Sideris,
author = {Sideris, Thomas},
year = {2017},
month = {07},
pages = {},
title = {Existence and asymptotic behavior of affine motion of 3D ideal fluids surrounded by vacuum},
volume = {225},
journal = {Arch. Ration. Mech. Anal.},
doi = {10.1007/s00205-017-1106-3}
}

@article{shkollerSideris,
author = {Shkoller, Steve and Sideris, Thomas},
year = {2019},
month = {10},
pages = {},
title = {Global Existence of Near-Affine Solutions to the Compressible Euler Equations},
volume = {234},
journal = {Arch. Ration. Mech. Anal.},
doi = {10.1007/s00205-019-01387-4}
}
\end{document}